\newcommand{\1}{{1\!\!1}}
\numberwithin{equation}{section}
\theoremstyle{plain}
\newtheorem{thm}{Theorem}[section]
\newtheorem{prop}[thm]{Proposition}
\newtheorem{cor}[thm]{Corollary}
\newtheorem{lem}[thm]{Lemma}
\newtheorem{theorem*}{Theorem}[]
\theoremstyle{definition}
\newtheorem{example}[thm]{Example}
\theoremstyle{remark}
\newcommand{\R}{\mathbb{R}}
\newcommand{\Z}{\mathbb{Z}}
\newcommand{\T}{\mathbb{T}}
\newcommand{\V}{\mathbf{V}(\R)}
\newcommand{\Sch}{\mathbf{Sch}_{c} (\R)}
\newcommand{\Reg}{\mathbf{Reg}_{comp}(\R)}
\newcommand{\s}{\, \mathbf{s}\, }
\newcommand{\Cat}{\mathcal{C}}
\newcommand{\HCat}{Ho\, \mathcal{C}}
\newcommand{\Kdot}{{K_* }}
\newcommand{\A}{\mathcal{G}}
\newcommand{\AS}{\mathcal{AS}}
\newcommand{\Na}{\mathcal{N}}
\newcommand{\X}{\mathcal{X}}
\newcommand{\proj}{\mathbb{P}}
\renewcommand{\1}{\mathbf{1}}
\newcommand{\inv}{^{-1}}
\DeclareMathOperator{\sgn}{sign\,}
\DeclareMathOperator{\supp}{Supp}
\DeclareMathOperator{\codim}{codim}
\DeclareMathOperator{\Ker}{Ker}
\newcommand{\WC}{{\mathcal WC}}
\newcommand{\XR}{\underline X}
\newcommand{\YR}{\underline Y}
\newcommand{\ZR}{\underline Z}
\newcommand{\DR}{\underline D}
\newcommand{\UR}{\underline U}
\date{February 24, 2009}
\title{The weight filtration for real algebraic varieties}
\author{ Clint McCrory and Adam Parusi\'nski }
\thanks{Research partially supported by a Math\' ematiques en Pays de la Loire (MATPYL) grant.}
\address {Mathematics Department, University of Georgia, Athens GA
30602, USA}
\email{clint@math.uga.edu}
\address {Laboratoire Angevin de Recherche en Math\'ematiques, UMR 6093 
du CNRS,  Universit\'e d'Angers,
   2, bd Lavoisier, 49045 Angers cedex, France}
\email{adam.parusinski@univ-angers.fr}
\begin{document}

 \begin{abstract}
 Using the work  of Guill\'en and Navarro Aznar we associate
 to each real algebraic  variety a filtered chain
complex,
the weight complex, which is  well-defined up to 
filtered quasi-isomorphism, 
and which induces on Borel-Moore
homology
  with $\Z_2$ coefficients an analog of the
weight filtration for complex algebraic varieties.

The weight complex can be represented by a geometrically
defined  filtration on the
 complex of semialgebraic chains.  To show this we define the
weight complex for
 Nash manifolds and, more  generally, for arc-symmetric sets, and we adapt to
Nash manifolds the
theorem of  Mikhalkin  that  two  compact connected smooth
manifolds of the
same dimension can be connected by a sequence of smooth blowups and
blowdowns.

 The weight complex is acyclic for smooth blowups and additive for closed
inclusions.  As a corollary
 we obtain a new construction of the virtual Betti numbers, which are additive
invariants of real algebraic varieties, and we
 show their invariance by a large class of mappings that includes 
regular  homeomorphisms and
 Nash diffeomorphisms.
\end{abstract}

\subjclass[2000]{Primary: 14P25. Secondary: 14P10, 14P20}


\maketitle

The weight filtration of the homology of a real variety was introduced by Totaro \cite{totaro}. He
used the work of Guill\'en and Navarro Aznar \cite{navarro} to show the existence of such a filtration,
by analogy with Deligne's weight filtration for complex varieties \cite{deligne}, as
generalized by Gillet and Soul\'e \cite{gilletsoule}. There is also earlier unpublished work
on the real weight filtration by M. Wodzicki, and more recent unpublished work on weight filtrations
by Guill\'en and Navarro Aznar \cite{navarro2}.

Totaro's weight filtration for a compact  variety is associated to the spectral 
sequence of a cubical 
hyperresolution. For complex varieties this spectral sequence collapses with rational
coefficients, but for real varieties, where it is defined with $\Z_2$
coefficients, the spectral sequence does not collapse in general.
 We show, again using the work of Guill\'en and Navarro Aznar, that the
weight spectral sequence is itself a natural invariant of a real variety. There is a functor that assigns
to each real algebraic variety a filtered chain complex, the \emph{weight complex}, that is unique up to 
filtered quasi-isomorphism, and functorial for proper regular morphisms.
The weight spectral sequence is the spectral sequence associated to this filtered complex, 
and the weight
filtration is the corresponding filtration of Borel-Moore homology with coefficients in $\Z_2$.

To apply the extension theorems of Guill\'en and Navarro Aznar \cite{navarro}, we work in the category of schemes over $\R$, for which one has resolution of singularities, the Chow-Hironaka Lemma (\emph{cf.}\ \cite{navarro} (2.1.3)), and the compactification theorem of Nagata \cite{nagata}. We obtain the weight complex as a functor of schemes and proper regular morphisms.

Using the theory of Nash constructible functions, we extend the weight complex functor to Kurdyka's larger category of $\mathcal A\mathcal S$ (arc-symmetric) sets (\cite{kurdyka1},
\cite{aussois}), and we obtain in particular that the weight complex is invariant under regular rational homeomorphisms of real algebraic sets in the sense of Bochnak-Coste-Roy \cite{BCR}.
We give a geometric chain-level description of the weight filtration of the
complex of semialgebraic chains.

The characteristic properties of the weight complex describe how it behaves with respect to
generalized blowups (acyclicity) and inclusions of open subvarieties (additivity). The initial term
of the weight spectral sequence yields additive invariants for real algebraic varieties, the virtual
Betti numbers \cite{mccroryparusinski}. Thus we obtain that the virtual Betti numbers are invariants
of regular homeomorphims of real algebraic sets. For real toric varieties, the weight spectral sequence 
is isomorphic to the toric spectral sequence introduced by Bihan, Franz, McCrory, and van Hamel
\cite{BFMH}.

In section \ref{homological} we prove the existence and uniqueness of the filtered weight complex of a real algebraic
variety. The weight complex is the unique acyclic additive
extension to all varieties of the functor that assigns to a nonsingular projective variety the complex
of semialgebraic chains with the canonical filtration.

In section \ref{geometric} we characterize the weight filtration of the semialgebraic chain complex using resolution
of singularities.

In section \ref{nash} we introduce the Nash constructible filtration of semialgebraic chains, following
Pennanea'ch \cite{penn2}, and we show that it gives the weight filtration. A key tool is Mikhalkin's theorem
\cite{mikhalkin1} that any two connected closed $C^\infty$ manifolds of the same dimension
can be connected by a sequence of blowups and blowdowns.

In section \ref{toric} we show that for a real toric variety the Nash constructible filtration is the same as
the filtration on cellular chains defined by Bihan \emph{et al.}\ using toric topology.

We thank Michel Coste for his comments on a preliminary version of this paper.

\section{The homological weight filtration}
\label{homological}
\medskip

By a \emph{real algebraic variety} we mean a reduced separated scheme of finite type over $\R$.  
By a \emph{compact}  variety we mean a scheme that is complete (proper over $\R$).
We adopt the following notation of Guill\'en and Navarro Aznar \cite{navarro}. Let $\Sch$ be
the category of real algebraic varieties and proper regular morphisms, \emph{i.\ e.}\ proper  morphisms of schemes.   
By $\Reg$ we denote the 
subcategory of compact nonsingular varieties, and by $\V$ the category of projective nonsingular varieties.   A proper morphism or a compactification of varieties will always be understood in the scheme-theoretic sense.

In this paper we are interested in the topology of the set of real points of a real algebraic variety $X$.
Let $\XR$ denote the set of real points of $X$. The set $\XR$, with its sheaf of regular functions, is 
a real algebraic variety in the sense of Bochnak-Coste-Roy \cite{BCR}.
For a variety $X$ we denote by $C_*(X)$ the complex of semialgebraic chains of $\XR$ with coefficients in $\Z_2$ and closed supports.    
The homology of  $C_*(X)$ is the 
Borel-Moore homology of $\XR$  with $\Z_2$ coefficients,  and will be denoted  by $H_*(X)$.  

\subsection{Filtered complexes}\label{filteredcomplexes}

Let $\Cat$ be the category of bounded complexes of $\Z_2$  
vector spaces with  increasing bounded filtration,  
$$
\Kdot = \cdots \leftarrow K_0 \leftarrow K_1 \leftarrow K_2 \leftarrow  
\cdots, \qquad\cdots  \subset F_{p-1} \Kdot \subset F_p \Kdot \subset 
F_{p+1}\Kdot \subset \cdots. 
$$
Such a filtered complex defines a spectral
sequence $\{E^r,d^r\}$, $r=1,2,\dots$, with
$$
E^0_{p,q} = \frac {F_p K_{p+q}} {F_{p-1} K_{p+q}} , \qquad 
E^1_{p,q} = H_{p+q} \left(\frac {F_p \Kdot } {F_{p-1} \Kdot }\right),
$$
that converges to the homology of $\Kdot$,
$$
E^\infty_{p,q} = \frac {F_p(H_{p+q} \Kdot) } {F_{p-1} (H_{p+q}\Kdot) },
$$
where $F_p(H_n\Kdot) = \text{Image}[H_n(F_p\Kdot) \to H_n(\Kdot)]$ (\emph {cf.} 
\cite{maclane}, Thm. 3.1).
A \emph{quasi-isomor-phism} in $\Cat$ is a filtered quasi-isomorphism, \emph{i.\ e.} 
a  morphism of filtered complexes that induces an isomorphism on $E^1$.  Thus
a quasi-isomorphism induces an isomorphism of the associated spectral
sequences.

Following Guill\'en and Navarro Aznar (\cite{navarro}, (1.5.1)) we denote by $\HCat$ the category 
$\Cat$ localised with respect to filtered quasi-isomorphisms.   


Every bounded complex $\Kdot$ has a \emph{canonical filtration} 
\cite{deligne2} given by:
$$
F_{p}^{can} \Kdot = 
\begin{cases}
K_q \qquad & \text { if } q>-p\\
\ker \partial_q \qquad & \text { if } q=-p\\
0 \qquad & \text { if } q<-p\\
\end{cases}
$$
We have
\begin{equation}\label{canonical}
E^1_{p,q} = H_{p+q} \left(\frac {F^{can}_p \Kdot } {F^{can}_{p-1} \Kdot }\right)
= \begin{cases}
H_{p+q} (\Kdot ) \qquad & \text { if } p+q =-p\\
0 \qquad & \text {otherwise }\\
\end{cases}
\end{equation}
Thus a quasi-isomorphism of complexes induces a filtered quasi-isomorphism
of complexes with  canonical filtration.

To certain types of diagrams in $\Cat$ we can associate an element of $\Cat$,
the \emph{simple filtered complex} of the given diagram.
We use
notation from \cite{navarro}.  For $n\geq 0$ let $\square^+_n$ be the
partially ordered set of subsets of $\{0,1,\dots,n\}$. A
\emph{cubical diagram} of type $\square^+_n$ in a category $\mathcal X$ is a contravariant functor
from $\square^+_n$ to $\mathcal X$. If $\mathcal K$ is a cubical diagram in $\Cat$
of type $\square^+_n$, let $K_{*,S}$ be the complex labelled by the subset $S\subset \{0,1,\dots n\}$,
and let $|S|$ denote the number of elements of $S$. The simple complex $\s\mathcal K$ is defined by
$$
\s\mathcal K_k =  \bigoplus_{i+|S|-1=k}\mathcal K_{i,S}
$$
with differentials $\partial: \s \mathcal K_k \to \s \mathcal K_{k-1}$
defined as follows. For each $S$ let $\partial':K_{i,S}\to K_{i-1,S}$ be the differential of $K_{*,S}$.
If $T\subset S$ and $|T| = |S|-1$, let $\partial_{T,S}:K_{*,S}\to K_{*,T}$ be the chain map corresponding
to the inclusion of $T$ in $S$. If $a\in K_{i,S}$, let 
$$
\partial''(a) = \sum \partial_{T,S}(a),
$$
where the sum is over all $T\subset S$ such that $|T| = |S|-1$, and 
$$
\partial(a) = \partial'(a) + \partial''(a).
$$
The filtration of $\s \mathcal K$ is given by
$F_p \s \mathcal K = \s F_p \mathcal K$, 

$$
(F_p \s \mathcal K)_k  = \bigoplus_{i+|S|-1=k}F_p(\mathcal K_{i,S}).
$$ 


\subsection{The weight complex}\label{weightcomplex} 

To state the next theorem, we only need  to consider diagrams in of type $\square^+_0$
or type $\square^+_1$. The inclusion of a closed subvariety $Y\subset X$ is 
 a $\square^+_0$-diagram in $\Sch$.
An \emph{acyclic square} (\cite{navarro}, (2.1.1)) is a $\square^+_1$-diagram
in $\Sch$,
\begin{equation}\label{acyclic}\minCDarrowwidth 1pt\begin{CD}
\tilde Y @> >> \tilde X \\
 @V VV @VV\pi V \\
Y @>i >>X
\end{CD}\end{equation}
where $i$ is the inclusion of a closed subvariety, $\tilde Y = \pi^{-1}(Y)$, and the restriction
of $\pi$ is an isomorphism $\tilde X \setminus \tilde Y \to X \setminus Y$. An  \emph{elementary acyclic square}
 is an acylic square such that $X$ is compact and nonsingular, $Y$ is nonsingular, and $\pi$ is the blowup of $X$
along $Y$.


For a real algebraic variety $X$, let $F^{can} C_*(X)$ denote the complex $C_*(X)$
of semialgebraic chains with the canonical filtration.


\begin{thm} \label{navarro}
The functor 
$$
F^{can} C_*:\V \to \HCat
$$ 
that associates to a nonsingular projective variety  $M$ 
the semialgebraic chain complex of $M$
with  canonical filtration admits an extension to 
a functor defined for all real algebraic varieties and proper regular morphisms,
$$
\WC_*:\Sch \to \HCat,
$$ 
with the following properties:
\begin{enumerate}
\item 
Acyclicity. For an acyclic square (\ref{acyclic})
the simple filtered complex of the diagram
\begin{equation*}
\minCDarrowwidth 1pt\begin{CD}
\WC_{*}(\tilde Y) @> >> \WC_{*}(\tilde X) \\
 @V VV @VV  V \\
\WC_{*}(Y)@> >>\WC_{*}(X)
\end{CD}\end{equation*}
is acyclic (quasi-isomorphic 
to the zero complex). 
\item 
Additivity. For a closed inclusion $Y\subset X$,  
the simple filtered complex of the diagram
$$
 \WC_{*} (Y) \rightarrow \WC_{*} (X)
$$
is quasi-isomorphic to $\WC_{*} (X\setminus Y)$.  
\end{enumerate} 
Such a functor $\WC_*$ is unique up to a unique quasi-isomorphism. 
\end{thm}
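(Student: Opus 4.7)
The strategy is to recognize the statement as a direct application of the extension criterion of Guillén and Navarro Aznar (\cite{navarro}, Théorème 2.2.2): given a descent category $\mathcal{D}$ and a functor $G\colon\V\to\mathcal{D}$ vanishing on the empty variety, compatible with disjoint unions, and ``acyclic'' on elementary acyclic squares, there exists a unique (up to quasi-isomorphism) extension $\Sch\to\mathcal{D}$ satisfying both the acyclicity axiom on arbitrary acyclic squares and the additivity axiom on closed inclusions. The whole task is therefore to check that $F^{can}C_*\colon\V\to\HCat$ satisfies these hypotheses.

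First I would verify that $\HCat$, together with the class of filtered quasi-iso\-mor\-phisms and the simple-complex construction of Section~\ref{filteredcomplexes}, constitutes a descent category in the sense used in \cite{navarro}. The arguments in \emph{loc.\ cit.}\ for the rational analogue of $\Cat$ go through verbatim with $\Z_2$-coefficients. Next I would dispose of the elementary hypotheses: $F^{can}C_*(\emptyset)=0$, and for a disjoint union $X\sqcup X'$ the semialgebraic chain complex splits as a direct sum because chains have closed supports, and the canonical filtration commutes with direct sums. These facts are immediate.

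The substantive step is the \emph{elementary acyclicity condition}: for an elementary acyclic square \eqref{acyclic} with $X$ smooth projective, $Y\subset X$ smooth, and $\pi$ the blowup of $X$ along $Y$, I must show that the simple filtered complex of
\[
\minCDarrowwidth 1pt\begin{CD}
F^{can}C_*(\tilde Y) @>>> F^{can}C_*(\tilde X) \\
@VVV @VVV \\
F^{can}C_*(Y) @>>> F^{can}C_*(X)
\end{CD}
\]
is filtered quasi-isomorphic to zero. By equation \eqref{canonical}, an ordinary quasi-isomorphism induces a filtered quasi-isomorphism of the associated canonically filtered complexes, so it suffices to show that the underlying unfiltered simple complex is acyclic. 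That, in turn, amounts to exactness of the Mayer--Vietoris sequence
\[
\cdots\to H_k(\tilde Y)\to H_k(\tilde X)\oplus H_k(Y)\to H_k(X)\to H_{k-1}(\tilde Y)\to\cdots
\]
in Borel--Moore homology with $\Z_2$-coefficients. I would obtain this by comparing the two long exact sequences of the closed pairs $(\tilde X,\tilde Y)$ and $(X,Y)$, using that $\pi$ restricts to a semialgebraic homeomorphism $\tilde X\setminus\tilde Y\to X\setminus Y$ to identify the relative terms $H_*(\tilde X,\tilde Y)\cong H_*(X,Y)$, and finishing with a short diagram chase. Once this is in hand, \cite[2.2.2]{navarro} delivers $\WC_*$ together with its uniqueness up to a unique quasi-isomorphism.

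The principal obstacle is the geometric input in the last step: one must handle Borel--Moore semialgebraic chains with closed supports carefully enough to obtain the relative long exact sequences and the excision identification $H_*(X,Y)\cong H_*(X\setminus Y)$. These facts are standard, but they need to be phrased at the level of chain complexes (not merely homology) in order to interact properly with the canonical filtration and with the simple-complex construction. Beyond that geometric point, the proof is a mechanical verification of the hypotheses of the abstract extension theorem.
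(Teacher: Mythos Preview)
Your outline is sound up to the ``substantive step,'' but the reduction you propose there is incorrect. You invoke \eqref{canonical} to say that ordinary acyclicity of the simple complex implies filtered acyclicity, on the grounds that the pieces carry the canonical filtration. The problem is that the filtration on the simple complex $\s\mathcal K$ is \emph{not} the canonical filtration of $\s\mathcal K$: by definition $F_p\,\s\mathcal K=\s(F^{can}_p\mathcal K)$, and one checks directly that $(F_p\s\mathcal K)_k/(F_{p-1}\s\mathcal K)_k$ is nonzero for $k$ ranging over four consecutive degrees $-p-1\le k\le -p+2$, whereas for a genuinely canonically filtered complex the graded pieces live in at most two degrees. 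So \eqref{canonical} does not apply, and acyclicity of the underlying complex (equivalently, the long Mayer--Vietoris sequence you write down) is not enough.

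If you compute $E^1_{p,*}$ of $\s\mathcal K$ you find that its nonvanishing terms are exactly the kernel, the homology, and the cokernel of the three-term sequence
\[
H_{-p}(\tilde Y)\longrightarrow H_{-p}(Y)\oplus H_{-p}(\tilde X)\longrightarrow H_{-p}(X),
\]
all sitting at the \emph{same} homological degree $-p$. Thus filtered acyclicity is equivalent to the \emph{short} exact sequence
\[
0\to H_k(\tilde Y)\to H_k(Y)\oplus H_k(\tilde X)\to H_k(X)\to 0
\]
for every $k$, not merely to the long exact sequence. This splitting is a special feature of blowups of nonsingular varieties along nonsingular centers (with $\Z_2$ coefficients), and it is precisely the geometric input the paper uses; see \eqref{short} and the reference to \cite{virtual}. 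Once you replace your long exact sequence by this short one (and drop the appeal to \eqref{canonical}), your argument coincides with the paper's.
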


\begin{proof} 
This theorem follows from \cite{navarro}, Theorem $(2.2.2)^{op}$. 
By Proposition $(1.7.5)^{op}$ of \cite{navarro}, the category $\Cat$, with the class of 
quasi-isomorphisms and  the operation of simple complex $\s$ defined above,
is a category of homological descent.  
Since it factors through $\Cat$, the functor $F^{can} C_*$ is $\mathbf\Phi$-rectified (\cite{navarro}, (1.6.5), (1.1.2)). 
Clearly $F^{can} C_*$   is additive for disjoint unions (condition (F1) of \cite{navarro}).
It remains  to check  condition (F2) for $F^{can} C_*$, that the simple
filtered complex associated to an elementary acyclic square is acyclic.  

Consider the elementary acyclic square (\ref{acyclic}).
Let $\mathcal K$ be the simple complex associated to the $\square^+_1$-diagram 
\begin{equation*}
\minCDarrowwidth 1pt\begin{CD}
F^{can} C_*(\tilde Y) @> >> F^{can} C_*(\tilde X) \\
 @V VV @VV  V \\
F^{can} C_*(Y)@> >>F^{can} C_*(X)
\end{CD}\end{equation*}
By definition of the canonical filtration, for each $p$ we have
$(F_p\s\mathcal K)_k/ (F_{p-1}\s\mathcal K)_k\neq 0$ only for $-p+2\geq k\geq -p-1$,
and the complex $(E^0_{p,*}, d^0)$ has the following form:
$$
0\rightarrow \frac{(F_p\s\mathcal K)_{-p+2}}{(F_{p-1}\s\mathcal K)_{-p+2}}\rightarrow
\frac{(F_p\s\mathcal K)_{-p+1}}{(F_{p-1}\s\mathcal K)_{-p+1}}\rightarrow
\frac{(F_p\s\mathcal K)_{-p}}{(F_{p-1}\s\mathcal K)_{-p}}\rightarrow
\frac{(F_p\s\mathcal K)_{-p-1}}{(F_{p-1}\s\mathcal K)_{-p-1}}\rightarrow 0.
$$
A  computation gives
\begin{eqnarray*} 
& &  H_{-p+2}(E^0_{p,*}) = 0 \\
& &   H_{-p+1}(E^0_{p,*}) = \text{Ker}[H_{-p}(\tilde Y) \to H_{-p}(Y)\oplus H_{-p}(\tilde X)]  \\  
& &  H_{-p}(E^0_{p,*}) = \text{Ker}[H_{-p}(Y)\oplus H_{-p}(\tilde X)\to H_{-p}(X)]/\text{Im}[H_{-p}(\tilde Y) \to H_{-p}(Y)\oplus H_{-p}(\tilde X)] \\  
& &    H_{-p-1}(E^0_{p,*}) = H_{-p}(X)/\text{Im}[H_{-p}(Y)\oplus H_{-p}(\tilde X)\to H_{-p}(X)].
\end{eqnarray*}
These groups are zero because for all $k$ we have the short exact sequence  of an elementary acyclic square,
\begin{equation}\label{short} 
0\rightarrow H_k(\tilde Y)\rightarrow H_k(Y)\oplus H_k(\tilde X)\rightarrow H_k(X)\rightarrow 0
\end{equation} 
(\emph{cf.} \cite{virtual}, proof of Proposition 2.1).
\end{proof} 

\rem\label{Fcan} The above argument shows that the functor $F^{can}$ is acyclic on any acyclic square \eqref{acyclic}, 
provided the varieties $X,Y,\tilde X, \tilde Y$ are nonsingular and compact. 

\vskip.1in
If $X$ is a real algebraic variety, the \emph{weight complex} of $X$ is the filtered complex $\mathcal WC_*(X)$.  
A stronger version of the uniqueness of $\WC_*$ is given by the following naturality theorem.

\begin{thm} \label{naturality}
Let $A_*$, $B_*:\V \to \Cat$ be functors such that their localizations
$\V \to \HCat$ satisfy the disjoint additivity condition (F1) and the elementary acyclicity
condition (F2). If $\tau:A_*\to B_*$ is a morphism of functors, then the localization
of $\tau$ extends uniquely to a morphism $\tau': \mathcal W A_*\to \mathcal W B_*$.
\end{thm}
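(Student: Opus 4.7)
The plan is to deduce this naturality statement from the same Guillén--Navarro Aznar machinery used to establish Theorem \ref{navarro}. The extension theorem $(2.2.2)^{op}$ of \cite{navarro} is not merely an existence statement for functors: it produces canonical extensions via cubical hyperresolutions, and that construction is itself functorial in the input. So the strategy is to feed $\tau$ into the machine rather than just its endpoints.

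Explicitly, for $X\in\Sch$, choose a cubical hyperresolution $X_\bullet:\square^+_n\to\V$ of a Nagata compactification of $X$ together with its complement (this is the resolution data that underlies the proof of Theorem \ref{navarro}, using the Chow--Hironaka Lemma). Then $\mathcal W A_*(X)$ is represented by $\s A_*(X_\bullet)$ and $\mathcal W B_*(X)$ by $\s B_*(X_\bullet)$. The morphism of functors $\tau$ induces termwise a morphism $\tau_\bullet:A_*(X_\bullet)\to B_*(X_\bullet)$ of $\square^+_n$-diagrams in $\Cat$, and taking simple complexes gives a morphism $\s\tau_\bullet:\s A_*(X_\bullet)\to \s B_*(X_\bullet)$ in $\Cat$, which I would propose as the definition of $\tau'_X$.

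The cleanest way to verify that $\tau'_X$ is well-defined up to quasi-isomorphism, is natural in $X$ for proper regular morphisms, and is unique, is to repackage the data: view $\tau$ as a functor from $\V$ to the category $\mathrm{Mor}(\Cat)$ of morphisms of filtered complexes, equipped with componentwise filtered quasi-isomorphisms and the componentwise simple complex operation. One checks that $\mathrm{Mor}(\Cat)$ inherits from $\Cat$ the structure of a category of homological descent in the sense of Guillén--Navarro Aznar (all of the axioms are levelwise); that $\tau$, regarded as such a functor, is $\mathbf\Phi$-rectified and satisfies (F1) because both $A_*$ and $B_*$ do; and that it satisfies (F2) because the simple complex of an elementary acyclic square is acyclic in the source and target separately. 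Theorem $(2.2.2)^{op}$ of \cite{navarro} then applies directly to $\tau$, producing a unique extension $\Sch\to\mathrm{Ho}\,\mathrm{Mor}(\Cat)$ whose source and target projections are forced, by the uniqueness clause applied to $A_*$ and $B_*$ individually, to coincide with $\mathcal W A_*$ and $\mathcal W B_*$. This gives $\tau'$, together with its uniqueness.

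I expect the only real work to be the verification that $\mathrm{Mor}(\Cat)$, endowed with componentwise simple complexes and componentwise quasi-isomorphisms, is a category of homological descent, and that (F1)--(F2) for $\tau$ reduce to the corresponding statements for $A_*$ and $B_*$. Both are essentially formal because the relevant structures in $\Cat$ (filtered quasi-isomorphisms, the simple complex $\s$, and the acyclicity criterion) are defined levelwise on the underlying bigraded vector spaces, so no new homological input beyond what was already used for Theorem \ref{navarro} is needed.
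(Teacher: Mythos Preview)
Your proposal is correct, and it relies on the same Guill\'en--Navarro Aznar machinery that the paper invokes; the difference is one of packaging. The paper's proof is a bare citation: Theorems $(2.1.5)^{op}$ and $(2.2.2)^{op}$ of \cite{navarro} already contain, as part of their statements, the assertion that the extension procedure is functorial in morphisms of the input functor, so the naturality of $\tau'$ is simply read off from those results rather than re-derived.

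Your route---regarding $\tau$ as a single functor $\V\to\mathrm{Mor}(\Cat)$ and applying the existence theorem $(2.2.2)^{op}$ to the arrow category---is the standard way to \emph{prove} such a naturality clause from the existence clause alone, and it is essentially how one would justify the cited statements in \cite{navarro} if one were writing them out. The verification that $\mathrm{Mor}(\Cat)$ with componentwise quasi-isomorphisms and componentwise $\s$ is a category of homological descent is indeed formal, as you say, and the compatibility of the source/target projections with the descent structure is what lets you identify the endpoints of the extended arrow with $\mathcal W A_*$ and $\mathcal W B_*$ via the uniqueness clause. So your argument is sound; it is just more explicit than the paper's one-line appeal to \cite{navarro}, and it buys you a self-contained justification at the cost of the (routine) check that the arrow category inherits the relevant structure.
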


\begin{proof}
This follows from $(2.1.5)^{op}$ and $(2.2.2)^{op}$ of \cite{navarro}.
\end{proof}

Thus if $\tau:A_*(M)\to B_*(M)$ is a quasi-isomorphism for all nonsingular projective varieties
$M$, then $\tau':\mathcal W A_*(X)\to \mathcal W B_*(X)$ is a quasi-isomorphism for 
all varieties $X$.

\begin{prop}\label{borelmoore}
For all real algebraic varieties $X$, the homology of the complex $\mathcal WC_*(X)$ is the
Borel-Moore homology of $X$ with $\Z_2$ coefficients,
$$
H_n(\mathcal WC_*(X)) = H_n(X).
$$
\end{prop}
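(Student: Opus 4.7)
The plan is to forget the filtration on $\WC_*(X)$ and compare the resulting unfiltered chain complex with $C_*(X)$. A filtered quasi-isomorphism in $\Cat$ induces an isomorphism on $E^1$, hence on $E^\infty$, hence on total homology by the boundedness of the filtration. Consequently the forgetful functor from $\Cat$ to the derived category $\D$ of bounded $\Z_2$-complexes descends to a functor $U: \HCat \to \D$, and the composition $U \circ \WC_*$ defines a functor $\Sch \to \D$ whose restriction to $\V$ is simply $C_*$, since the underlying complex of $F^{can}C_*(M)$ is $C_*(M)$.

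I would then show that the direct semialgebraic chain functor $C_*: \Sch \to \D$ is a second extension of $C_*|_{\V}$ satisfying the unfiltered analogues of the acyclicity and additivity of Theorem \ref{navarro}. For additivity, a closed inclusion $Y \subset X$ produces a short exact sequence
$$
0 \to C_*(Y) \to C_*(X) \to C_*(X \setminus Y) \to 0
$$
of semialgebraic chain complexes (inclusion followed by restriction outside $Y$), so the cone of $C_*(Y) \to C_*(X)$ is quasi-isomorphic to $C_*(X \setminus Y)$. For acyclicity on an acyclic square \eqref{acyclic}, applying additivity to both rows identifies the two vertical cones in the simple complex with $C_*(\tilde X \setminus \tilde Y)$ and $C_*(X \setminus Y)$, and the isomorphism $\pi: \tilde X \setminus \tilde Y \to X \setminus Y$ makes the resulting double cone acyclic.

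Applying the unfiltered version of Theorem \ref{navarro} --- a direct consequence of Theorem $(2.2.2)^{op}$ of \cite{navarro} applied to bounded chain complexes endowed with the trivial filtration, where filtered quasi-isomorphisms reduce to ordinary quasi-isomorphisms --- such an extension is unique up to a natural quasi-isomorphism in $\D$. Hence $U \circ \WC_*(X) \simeq C_*(X)$ in $\D$ for every $X \in \Sch$, and passing to homology yields $H_n(\WC_*(X)) = H_n(C_*(X)) = H_n(X)$.

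The main obstacle is verifying the chain-level short exact sequence for semialgebraic chains with closed supports (so that additivity and then acyclicity hold already at the level of $\D$, not merely on homology), together with confirming that the descent framework of Guill\'en--Navarro applies to bounded unfiltered complexes. Both are standard in this context, so the identification of $\WC_*$ with $C_*$ up to quasi-isomorphism is essentially formal once the universal properties provided by Theorem \ref{navarro} are in hand.
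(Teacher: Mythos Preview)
Your proposal is correct and follows essentially the same approach as the paper: forget the filtration via the functor $\HCat \to Ho\,\D$, verify that the unfiltered semialgebraic chain functor $C_*$ already satisfies additivity and acyclicity on $\Sch$, and then invoke the uniqueness in Theorem $(2.2.2)^{op}$ of \cite{navarro} to identify $\WC_*(X)$ with $C_*(X)$ up to quasi-isomorphism. The only cosmetic difference is that the paper checks acyclicity directly from the short exact sequence $0\to C_*(\tilde Y)\to C_*(Y)\oplus C_*(\tilde X)\to C_*(X)\to 0$, whereas you deduce it from additivity applied to the two rows together with the isomorphism $\tilde X\setminus\tilde Y\cong X\setminus Y$; both arguments are equivalent.
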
 
\begin{proof}
Let $\mathcal D$ be the category of bounded complexes of $\Z_2$ vector spaces.
The forgetful functor $\Cat\to \mathcal D$ 
induces a functor $\varphi: \HCat\to Ho\mathcal D$. To see this, let $A'_*$, $B'_*$ be filtered
complexes, and let $A_*=\varphi (A'_*)$ and $B_*=\varphi (B'_*)$. A quasi-isomorphism
$f:A'_*\to B'_*$ induces an isomorphism of the corresponding spectral sequences,
which implies that $f$ induces an isomorphism $H_*(A_*) \to H_*(B_*)$; in other words $f:A_*\to B_*$
is a quasi-isomorphism.

Let $C_*:\Sch\to Ho\mathcal D$ be the functor that assigns to every real
algebraic variety $X$ the complex of semialgebraic chains $C_*(X)$. Then $C_*$
satisfies properties (1) and (2) of Theorem \ref{navarro}. Acyclicity of $C_*$ 
for  an acyclic square (\ref{acyclic}) 
follows from the short exact sequence of chain complexes
$$
0\rightarrow C_*(\tilde Y)\rightarrow C_*(Y)\oplus C_*(\tilde X)\rightarrow C_*(X)\rightarrow 0.
$$
The exactness of this sequence follows immediately from the definition of semialgebraic
chains.
Similarly, additivity of $C_*$ for  a closed embedding $Y\to X$ follows from the short exact sequence 
of chain complexes
$$
0\rightarrow C_*(Y)\rightarrow C_*(X)\rightarrow C_*(X\setminus Y)\rightarrow 0.
$$

Now consider the functor $\mathcal WC_*:\Sch\to \HCat$ given by Theorem \ref{navarro}.
The functors $\varphi\circ \mathcal WC_*$ and $C_*:\Sch\to Ho\mathcal D$ are
extensions of $C_*:\V \to Ho\mathcal D$, so by \cite{navarro} Theorem
$(2.2.2)^{op}$ we have that $\varphi(\mathcal WC_*(X))$ is quasi-isomorphic to
$C_*(X)$ for all $X$. Thus $H_*(\mathcal WC_*(X))=H_*(X)$, as desired.
\end{proof}


\subsection{The weight spectral sequence}

If $X$ is a real algebraic variety, the  \emph{weight spectral sequence} of $X$, $\{E^r, d^r\}$, $r = 1, 2, \dots$,  is the spectral sequence of the weight complex $\mathcal WC_*(X)$. It is well-defined by Theorem \ref{navarro}, and it converges to the homology of $X$
by Proposition \ref{borelmoore}. The associated filtration of the homology of $X$ is the \emph{weight filtration}:
$$
0 = \mathcal W_{-k-1}H_k(X)\subset \mathcal W_{-k}H_k(X)\subset\cdots\subset \mathcal W_{0}H_k(X) = H_k(X),
$$
where $H_k(X)$ is the homology with closed supports (Borel-Moore homology) with
coefficients in $\Z_2$. (We will show $ \mathcal W_{-k-1}H_k(X)= 0$ in Corollary \ref{wss} below.) The dual weight filtration on cohomology with compact supports is discussed in \cite{virtual}.

\rem By analogy with Deligne's weight filtration on the cohomology of a complex algebraic
variety \cite{deligne}, there should also be a weight filtration on the homology of a real algebraic variety
with classical compact supports and coefficients in $\Z_2$ (dual to cohomology with closed supports). We plan to study this filtration in subsequent work.

\vskip.1in

The weight spectral sequence $E^r_{p,q}$ is a second
quadrant spectral sequence.  (We will show in Corollary \ref{wss} that if $E^1_{p,q}\neq 0$ then
$(p,q)$ lies in the closed triangle with vertices $(0,0)$, $(0,d)$,
$(-d,2d)$, where $d = \dim X$.)
The reindexing 
$$
p' = 2p+q, \ \  q' = - p,\ \  r' =  r + 1
$$
gives a standard first quadrant spectral sequence, with 
$$
\tilde E^2_{p',q'}  = E^ 1_{-q',p'+2q'}. 
$$
(If $\tilde E^2_{p',q'}\neq 0$ then
$(p',q')$ lies in the closed triangle with vertices $(0,0)$, $(d,0)$,
$(0,d)$, where $d = \dim X$.) Note that the total grading is preserved: $p' + q' = p+q$.

The virtual Betti numbers \cite{virtual} are the Euler characteristics of the rows of $\tilde E^2$:
\begin{equation}\label{virtualbetti}
\beta_q(X) = \sum_p(-1)^p\dim_{\Z_2}\tilde E^2_{p,q}
\end{equation}
To prove this assertion we will show that the numbers $\beta_q(X)$ defined by (\ref{virtualbetti}) are additive
and equal to the classical Betti numbers for $X$ compact and nonsingular.

For each $ q\geq 0$ consider the chain complex defined by the $ q$th row of the $\tilde E^1$ term,
$$
C_*(X,q) = (\tilde E^1_{*, q}, \tilde d^1_{*, q}),
$$
where $\tilde d^1_{ p, q}: \tilde E^1_{ p, q}\to \tilde E^1_{ p-1, q}$. This chain complex is well-defined up to quasi-isomorphism, and its Euler characteristic is $\beta_{ q}(X)$. 

The additivity of $\mathcal WC_*$ implies that if $Y$ is a closed subvariety of $X$ then the chain
complex
$C_*(X\setminus Y,  q)$ is quasi-isomorphic to the mapping cone of the chain map
$C_*(Y,  q) \to C_*(X,  q)$, and hence there is a long exact sequence of homology groups
$$
\cdots \to \tilde E^2_{ p, q}(Y)\to  \tilde E^2_{ p, q}(X)\to  \tilde E^2_{ p, q}(X\setminus Y)\to 
\tilde E^2_{ p-1, q}(Y)\cdots .
$$
Therefore for each $q$ we have
$$
\beta_{ q}(X) = \beta_{ q}(X\setminus Y) + \beta_{ q}(Y).
$$
This is the additivity property of the virtual Betti numbers.

\rem Navarro Aznar pointed out to us  that $C_*(X, q)$ is actually well-defined up to chain homotopy equivalence. One merely applies \cite{navarro}, Theorem $(2.2.2)^{op}$, to the functor that assigns to a nonsingular projective variety $M$ the chain complex
$$
C_k(M, q) = \begin{cases}
H_{ q}(M)\qquad k = 0\\
0\qquad \qquad k\neq 0\\
\end{cases}
$$
in the category of bounded complexes of $\Z_2$ vector spaces localized with respect to chain homotopy equivalences. This  striking application of the theorem of Guill\'en and Navarro Aznar led to our proof of the existence of the weight complex.

\vskip.1in

We say the weight complex  is \emph{pure} if the reindexed
weight spectral sequence has
$\tilde E^2_{ p,  q} =0$ for $p \neq0$.  In this case
the numbers $\beta_q(X)$ equal the classical Betti numbers of $X$.

\begin{prop}\label{purity}
If $X$ is a compact nonsingular variety, the weight complex
$\WC_{*} (X)$ is pure. In other words, if $k\neq - p$ then
$$
H_k\left( \frac{\mathcal W_pC_*(X)}{\mathcal W_{p-1}C_*(X)} \right)=0.
$$
\end{prop}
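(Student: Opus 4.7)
The plan is to prove purity first for projective nonsingular $X$, where it is essentially tautological, and then to extend to arbitrary compact nonsingular $X$ by induction on dimension, using the Chow--Hironaka lemma to resolve $X$ to a projective nonsingular variety through a tower of smooth blowups and invoking the acyclicity of $\WC_*$ at each step.

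First, if $X$ is projective and nonsingular then $\WC_*(X)$ is represented in $\HCat$ by $F^{can} C_*(X)$, the complex of semialgebraic chains with the canonical filtration. Applying the description (\ref{canonical}) of the canonical filtration to each graded piece, the associated graded $F^{can}_p C_*(X)/F^{can}_{p-1} C_*(X)$ has homology concentrated in degree $-p$, which is exactly the desired purity.

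For general compact nonsingular $X$ I would induct on $d = \dim X$; the case $d = 0$ is trivial. In the inductive step, using the Chow--Hironaka lemma (\cite{navarro}, (2.1.3)) together with Hironaka's resolution, choose a tower
$$
X = X_0 \longleftarrow X_1 \longleftarrow \cdots \longleftarrow X_N
$$
of smooth blowups with each $X_i$ compact nonsingular, each $\pi_{i+1}\colon X_{i+1}\to X_i$ the blowup along a nonsingular center $Y_i \subset X_i$ of positive codimension, and $X_N$ projective nonsingular. The center $Y_i$ and the exceptional divisor $\tilde Y_i \subset X_{i+1}$ are then compact nonsingular of dimension strictly less than $d$, hence pure by the outer inductive hypothesis. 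Now sub-induct downward on $i$: the base case $i = N$ is the projective case, and for the inductive step apply the acyclicity of $\WC_*$ (Theorem \ref{navarro}(1)) to the elementary acyclic square of $\pi_{i+1}$. Unpacking this acyclicity at the $E^1$-level yields a Mayer--Vietoris long exact sequence
$$
\cdots \to E^1_{p,q}(\tilde Y_i) \to E^1_{p,q}(Y_i) \oplus E^1_{p,q}(X_{i+1}) \to E^1_{p,q}(X_i) \to E^1_{p,q-1}(\tilde Y_i) \to \cdots
$$
Since the $E^1_{p,q}$ of $\tilde Y_i$, $Y_i$ and $X_{i+1}$ all vanish off the line $q = -2p$, the vanishing of $E^1_{p,q}(X_i)$ for $q \notin \{-2p,\,-2p+1\}$ is immediate from this sequence.

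The subtle borderline case is $q = -2p + 1$: the Mayer--Vietoris sequence only gives an injection $E^1_{p,-2p+1}(X_i) \hookrightarrow E^1_{p,-2p}(\tilde Y_i)$, whose image lies in the kernel of $E^1_{p,-2p}(\tilde Y_i) \to E^1_{p,-2p}(Y_i) \oplus E^1_{p,-2p}(X_{i+1})$. For pure compact nonsingular varieties $E^1_{p,-2p}$ equals $H_{-p}$ (Borel--Moore homology), so this map is identified with the injection appearing in the short exact sequence (\ref{short}) for the elementary acyclic square, and its kernel is zero. Carefully verifying this compatibility between the $E^1$-level Mayer--Vietoris sequence and the homological short exact sequence (\ref{short}) will be the main technical obstacle.
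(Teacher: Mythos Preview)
Your approach is correct, and the compatibility you flag as the main obstacle is genuinely resolvable: once a variety $Z$ is known to be pure, its weight spectral sequence degenerates at $E^1$, so $E^1_{p,-2p}(Z)=E^\infty_{p,-2p}(Z)$, and since the weight filtration on $H_{-p}(Z)$ has a single jump this equals $H_{-p}(Z)$. The maps on $E^1_{p,-2p}$ in your Mayer--Vietoris sequence are then, via the functoriality implicit in Proposition~\ref{borelmoore}, the ordinary Borel--Moore maps, and the short exact sequence~(\ref{short}) gives the required injectivity.

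The paper takes a different and shorter route. Instead of connecting $X$ to a projective model by an explicit tower and propagating purity along it, the paper observes that $F^{can}C_*$ makes sense as a functor on all of $\Reg$, not just $\V$, and is already additive for disjoint unions and acyclic for acyclic squares there (Remark~\ref{Fcan}). Since the inclusion $\V\hookrightarrow\Reg$ has the extension property of \cite{navarro}~(2.1.10), the uniqueness half of the Guill\'en--Navarro Aznar theorem $(2.1.5)^{op}$ forces any acyclic additive extension of $F^{can}C_*|_{\V}$ to $\Reg$ to be filtered quasi-isomorphic to $F^{can}C_*|_{\Reg}$; applying this to $\WC_*|_{\Reg}$ gives purity at once. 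Your argument is more hands-on and avoids invoking the abstract extension machinery a second time, at the cost of working out explicitly the induction that the uniqueness theorem packages; the paper's argument is shorter but more black-box.
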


\begin{proof} For $X$  projective and nonsingular, the filtered complex $\WC_*(X)$ is quasi-isomorphic
to $C_*(X)$ with the canonical filtration.
The inclusion $\V \to \Reg$ has the extension property of \cite{navarro} (2.1.10); the proof is
similar to \cite{navarro} (2.1.11). Therefore by
 \cite{navarro} Theorem $(2.1.5)^{op}$,  the functor $
F^{can} C_*:\V \to \HCat$ extends to a functor  $\Reg \to \HCat $ that is additive for disjoint unions and acyclic, and this extension is unique up to  quasi-isomorphism.  But  $F^{can}C_* : \Reg \to \HCat $ is such an extension, since $F^{can}C_*$ is additive for disjoint unions in $\Reg$ and acyclic for acyclic squares in $\Reg$ (\emph{cf.}\ the proof of Theorem \ref{navarro} and Remark \ref{Fcan}).
\end{proof}

If $X$ is compact, we will show that the reindexed weight spectral sequence $\tilde E^{ r}_{ p, q}$
is isomorphic to the spectral sequence of a \emph{cubical hyperresolution} of $X$
\cite{navarro} (\emph{cf.} \cite{peterssteenbrink}, ch. 5). 

A cubical hyperresolution of $X$ is a special type of $\square^+_n$-diagram with
final object $X$ and all other objects compact and nonsingular. Removing $X$ gives
a $\square_n$-diagram, which is the same thing as a $\triangle_n$-diagram,
\emph{i.e.}\ a diagram labelled by the simplices contained in the standard
$n$-simplex $\triangle_n$. (Subsets of $\{0,1,\dots,n\}$ of cardinality $i+1$
correspond to $i$-simplices.)

The spectral sequence of a cubical hyperresolution is the spectral  sequence of
the filtered complex $(C_*, \hat F)$, with $C_k = \bigoplus_{i+j=k} C_jX^{(i)}$, where $X^{(i)}$ is the disjoint
union of the objects labelled by $i$-simplices of $\triangle_n$, and the filtration $\hat F$
is by skeletons, 
$$
\hat F_pC_k = 
\bigoplus_{i\leq p}  C_{k-i}X^{(i)}
$$
The resulting first quadrant spectral sequence $\hat E^r_{p,q}$ 
converges to the homology of $X$, and the associated filtration is the weight
filtration as defined by Totaro \cite{totaro}.

Let $\partial = \partial' + \partial''$ be the boundary operator of the complex $C_*$, where
$\partial'_i: C_jX^{(i)}\to C_jX^{(i-1)}$ is the simplicial boundary operator, and 
$\partial''_j:C_jX^{(i)}\to C_{j-1}X^{(i)}$ is $(-1)^i$ times the boundary operator on semialgebraic chains.

\begin{prop}\label{cubical spectral sequence}
If $X$ is a compact variety, the weight spectral sequence $E$ of $X$ is isomorphic
to the spectral sequence $\hat E$ of a cubical hyperresolution of $X$,
$$E^r_{p,q}\cong \hat E^{r+1}_{2p+q, -p}.$$
Thus $\hat E^r_{p,q} \cong \tilde E^r_{p,q}$, the reindexed weight spectral sequence introduced above.
\end{prop}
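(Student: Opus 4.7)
The plan is to identify the weight filtration on the total complex of a cubical hyperresolution with the Deligne d\'ecalage of the skeleton filtration $\hat F$; the proposition will then be a direct consequence of the classical theorem relating the spectral sequences of a filtration and of its d\'ecalage.

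First, fix a cubical hyperresolution $X_\bullet \to X$ and apply the acyclicity property of Theorem \ref{navarro} iteratively---in the Guill\'en--Navarro Aznar framework this acyclicity is automatic for any $\square^+_n$-diagram, not only for acyclic squares---to obtain a filtered quasi-isomorphism $\WC_*(X) \simeq \s\, \WC_*(X^{(\bullet)})$ in $\HCat$, where $X^{(\bullet)}$ is the non-augmented part. Since each $X^{(i)}$ is compact and nonsingular, the proof of Proposition \ref{purity} gives $\WC_*(X^{(i)}) \simeq F^{can} C_*(X^{(i)})$. Combining, $\WC_*(X)$ is filtered quasi-isomorphic to the cubical total complex $C_*$ defined before the proposition, endowed with the \emph{fiberwise canonical filtration}
\[
F_p C_n \;=\; \bigoplus_{i+j=n} F^{can}_p C_j(X^{(i)}).
\]

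Next, check that $F$ coincides with the Deligne d\'ecalage $\mathrm{Dec}\,\hat F$ of the skeleton filtration,
\[
(\mathrm{Dec}\,\hat F)_p C_n \;=\; \hat F_{n+p} C_n \cap \partial^{-1}\bigl(\hat F_{n+p-1} C_{n-1}\bigr).
\]
An element $x = \sum x_{i,j}$ lies in $\hat F_{n+p}$ iff $x_{i,j} = 0$ for $j < -p$. Writing $\partial = \partial' + \partial''$, the condition $\partial x \in \hat F_{n+p-1}$ is automatic in every bidegree $(i, j)$ with $j < -p-1$, and in bidegree $(i, -p-1)$ it reduces---after using $x_{i+1, -p-1} = 0$---to $\partial'' x_{i,-p} = 0$ for every $i$. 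This is precisely the condition defining $F^{can}_p$ on the bottom row $j = -p$, so $F = \mathrm{Dec}\,\hat F$.

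Finally, invoke Deligne's d\'ecalage theorem (\emph{Th\'eorie de Hodge II}, (1.3.4)): for any filtered complex $(K,G)$ there is a canonical isomorphism of spectral sequences
\[
E^r_{p,q}(\mathrm{Dec}\,G) \;\cong\; E^{r+1}_{2p+q,\,-p}(G), \qquad r \geq 1.
\]
Applied with $G = \hat F$, this yields $E^r_{p,q} \cong \hat E^{r+1}_{2p+q,\,-p}$, the first assertion of the proposition; the identification $\hat E^r \cong \tilde E^r$ then matches the reindexing $p' = 2p+q$, $q' = -p$, $r' = r+1$ recorded just before the proposition. I expect the technical heart of the argument to be the d\'ecalage identification $F = \mathrm{Dec}\,\hat F$; once this is in hand, everything else is formal, with Guill\'en--Navarro Aznar descent handling the first step and Deligne's theorem concluding the last.
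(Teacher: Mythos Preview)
Your proposal is correct and follows essentially the same route as the paper: reduce to the total complex of the hyperresolution with the fiberwise canonical filtration, identify this filtration with the Deligne d\'ecalage of the skeleton filtration $\hat F$, and read off the spectral sequence isomorphism. The paper's presentation differs only cosmetically---it introduces an auxiliary filtration $\hat F'_pC_k=\hat Z^1_{p,k-p}$ and records the two identifications $(\hat E')^{r+1}_{p,q}=E^r_{-q,p+2q}$ and $(\hat E')^r\cong\hat E^r$ for $r\geq 2$ by hand rather than invoking Deligne's theorem as a black box, but this is exactly the content of the d\'ecalage statement you cite.
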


\begin{proof}

The acyclicity property of the weight complex ((1) of Theorem \ref {navarro}) implies
that $\WC_*$ is acyclic for cubical hyperresolutions (see \cite{navarro}, proof of Theorem
(2.1.5)). In other words, if the functor
$WC_*$ is applied to a cubical hyperresolution of $X$, the resulting $\square^+_n$-diagram
in $\Cat$ is acyclic. This says that
 $\WC_*(X)$ is filtered quasi-isomorphic
to the total filtered complex of the double complex $\WC_{i,j} = \WC_jX^{(i)}$. Since the
varieties $X^{(i)}$ are compact and nonsingular, this filtered complex is quasi-isomorphic to the total
complex $C_k = \bigoplus_{i+j=k}C_jX^{(i)}$ with the canonical filtration,
$$
F^{can}_pC_k = \Ker \partial''_{-p} \oplus \bigoplus_{j>-p}C_jX^{(k-j)}.
$$
Thus the spectral sequence of this filtered complex  is the weight spectral sequence $E^r_{p,q}$.

We now compare the two increasing filtrations $F^{can}$ and $\hat F$ on the complex $C_*$. The weight spectral sequence $E$ is associated to the filtration $F^{can}$, and the cubical hyperresolution spectral sequence $\hat E$ is associated to the filtration $\hat F$. We show that $F^{can} = \text{Dec}\,(\hat F)$,  the \emph{Deligne shift} of $\hat F$ (\emph{cf.} \cite{deligne2} (1.3.3), \cite{peterssteenbrink} A.49).

Let $\hat F'$ be the filtration
$$
\hat F'_pC_k = \hat Z^1_{p,k-p} =  \Ker[\partial: \hat F_pC_k\to C_{k-1}/\hat F_{p-1}C_{k-1}],
$$
and let $\hat E'$ be the associated spectral sequence. By definition of the Deligne shift,
$$
\hat F'_pC_k = \text{Dec}\, \hat F_{p-k}C_k.
$$
Now since $\partial = \partial' + \partial''$ it follows that
$$
\hat F'_pC_k = F^{can}_{p-k}C_k,
$$ 
and $F^{can}_{p-k}C_k =  F^{can}_{-q}C_k$, where $p +q = k$. Thus we can identify the spectral sequences
$$
(\hat E')^{r+1}_{p,q} = E^r_{-q,p+2q} \ \ (r\geq 1).
$$
On the other hand, the inclusion $\hat F'_pC_k \to \hat F_pC_k$ induces an isomorphism of spectral sequences 
$$
(\hat E')^r_{p,q}\cong \hat E^r_{p,q} \ \ (r\geq 2).
$$
\end{proof}

\example If $X$ is a compact divisor with normal crossings in a nonsingular variety, a cubical hyperresolution
of $X$ is given by the decomposition of $X$ into irreducible components.  (The corresponding simplicial diagram
 associates to an $i$-simplex the disjoint union of the intersections of
$i+1$ distinct irreducible components of $X$.)
The spectral sequence of such a cubical hyperresolution is 
the Mayer-Vietoris (or \u Cech) spectral sequence associated to the decomposition. Example 3.3 of \cite{virtual} is
an algebraic surface $X$ in affine 3-space such that $X$ is the union of three compact nonsingular surfaces with normal crossings
and the weight spectral sequence of $X$ does not collapse, \emph{i.e.}\ $\tilde E^2 \neq \tilde E^\infty$.
The variety $U=\R^3\setminus X$ is an example of a nonsingular noncompact variety with non-collapsing weight spectral
sequence. (The additivity property (2) of Theorem \ref{navarro} can be used to compute the spectral sequence of 
$U$.)

\begin{cor}\label{wss} Let $X$ be a real algebraic variety of dimension $d$, with weight spectral sequence $E$ and weight filtration $\mathcal W$. For all $p, q, r$, if $E^r_{p,q}\neq 0$ then $p\leq 0$ and $-2p\leq q \leq d-p$. Thus for all $k$ we have $\mathcal W_{-k-1}H_k(X) = 0$.
\end{cor}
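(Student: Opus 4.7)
The plan is three-fold: (a) prove the triangle bound for compact $X$ using Proposition~\ref{cubical spectral sequence}; (b) extend to arbitrary $X$ by Nagata compactification and additivity; (c) read off the weight-filtration vanishing from the lower edge $q \ge -2p$.

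For stage (a), with $X$ compact of dimension $d$, Proposition~\ref{cubical spectral sequence} gives $E^r_{p,q} \cong \hat E^{r+1}_{2p+q,-p}$ for all $r \ge 1$, where $\hat E$ is the first-quadrant spectral sequence of a cubical hyperresolution $X_\bullet \to X$. In a Guill\'en-Navarro hyperresolution of a $d$-dimensional variety, the components of $X^{(i)}$ have dimension at most $d-i$, so $\hat E^1_{p',q'} = H_{q'}(X^{(p')}) = 0$ outside $\{0 \le p',\ 0 \le q' \le d - p'\}$, and the same vanishing persists on every $\hat E^r$ as a subquotient of $\hat E^1$. Translating through $(p',q') = (2p+q,-p)$ gives exactly $p \le 0$ and $-2p \le q \le d - p$.

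For stage (b), I would choose a Nagata compactification $X \hookrightarrow \bar X$ and then replace $\bar X$ by the scheme-theoretic closure of $X$ in it, so that $X$ is open and dense in $\bar X$ and the complement $Y := \bar X \setminus X$ has $\dim Y \le d - 1$. Additivity (Theorem~\ref{navarro}(2)) identifies $\mathcal WC_*(X)$ up to filtered quasi-isomorphism with the simple (mapping cone) complex of $\mathcal WC_*(Y) \to \mathcal WC_*(\bar X)$; the short exact sequence of filtered complexes underlying this cone yields at the $E^1$ level a long exact sequence
\[
\cdots \to E^1_{p,q}(\bar X) \to E^1_{p,q}(X) \to E^1_{p,q-1}(Y) \to \cdots,
\]
so $E^1_{p,q}(X) = 0$ whenever both flanking terms vanish. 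Applying stage (a) to $\bar X$ and $Y$ with their respective dimensions $d$ and $\le d-1$, a quick case check shows that this common vanishing is equivalent to $(p,q)$ lying off the triangle of the corollary; the density of $X$ in $\bar X$ is precisely what keeps the $q \mapsto q-1$ shift from pushing contributions past the upper edge $q = d - p$. Every higher page $E^r_{p,q}(X)$ is a subquotient of $E^1_{p,q}(X)$, so the bound propagates to all $r \ge 1$.

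For stage (c), by convergence $\mathcal W_p H_k(X) / \mathcal W_{p-1} H_k(X) \cong E^\infty_{p,k-p}$, and the triangle bound forces this quotient to vanish for $p \le -k-1$: such $p$ gives $q := k - p \ge 2k + 1$ while $-2p \ge 2k + 2 > q$, violating $-2p \le q$. Together with the boundedness of the filtration this yields $\mathcal W_{-k-1}H_k(X) = 0$. The hardest part of the plan is stage (b): setting up the long exact sequence on the spectral-sequence pages of a mapping cone of filtered complexes, and verifying that the density of $X$ in $\bar X$ (hence $\dim Y < d$) is the precise input that keeps the upper edge of the triangle rigid under the index shift.
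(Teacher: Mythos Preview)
Your proof is correct and follows essentially the same route as the paper's: the compact case via Proposition~\ref{cubical spectral sequence} together with the dimension bound on a cubical hyperresolution, then the non-compact case via a compactification with boundary of strictly smaller dimension and the additivity property of Theorem~\ref{navarro}(2). You simply flesh out details the paper leaves implicit---the bound $\dim X^{(i)}\le d-i$, the long exact sequence on $E^1$ coming from the mapping-cone short exact sequence of filtered complexes, and the index check in stage~(c)---so there is nothing to add.
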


\begin{proof} For $X$ compact this follows from Proposition \ref{cubical spectral sequence} and the fact that $\hat E^r_{p,q} \neq 0$ implies $p\geq 0$ and $0\leq q \leq d-p$. If $U$ is a noncompact variety, let $X$ be a real algebraic compactification of $U$, and let $Y= X\setminus U$. We can assume that $\dim Y < d$. The corollary now follows from the additivity property of the weight complex (Theorem \ref{navarro} (2)).
\end{proof}



\medskip
\section{A geometric filtration}
\label{geometric}

We define a functor 
$$
\A C_* : \Sch\to \Cat 
$$ 
that assigns to each real algebraic variety $X$  the complex $C_*(X)$ of semialgebraic chains 
of $X$ (with coefficients in $\Z_2$ and closed supports), together with a filtration 
\begin{equation} \label{geometricfiltration}
0 = \A_{-k-1 } C_k (X) \subset  \A_{-k} C_k (X) \subset \A_{-k+1}  
C_k (X) \subset \cdots  \subset \A_{0} C_k (X) = C_k (X) .
\end{equation} 
We  prove  in Theorem \ref{geomiso} that the functor $\A C_*$ realizes the  weight complex functor 
$\mathcal W C_* : \Sch \to \HCat$ given 
by Theorem \ref{navarro}.  Thus  the filtration $\A_*$ of chains
gives the weight filtration of homology.


\subsection{Definition of the filtration $\A_*$}\label{geometricdefinition}

The filtration will  first be defined for compact varieties.  Recall that $\XR$ denotes the set of
real points of the real algebraic variety $X$.

\begin{thm}\label{axioms}
  There exists a unique filtration \eqref{geometricfiltration} on semialgebraic $\Z_2$-chains 
of compact real algebraic varieties with the following properties. 
Let $X$ be a compact real algebraic variety and let $c\in C_k (X)$. Then 
\begin{enumerate}
\item
If  $Y\subset X$ is an algebraic subvariety such that $\supp c \subset \YR$ then 
$$
c\in \A_p C_k(X) \Longleftrightarrow c\in \A_p C_k(Y).
$$
\item
Let $\dim X =k$ and let $\pi : \tilde X \to X$ be a resolution of $X$  such that 
there is a normal crossing divisor $D\subset\tilde X$ with 
$\supp \partial (\pi \inv c) \subset \DR$.  Then for $p\ge -k$,  
$$
c\in \A_p C_k(X) \Longleftrightarrow \partial (\pi \inv c) \in \A_p C_{k-1}(D).
$$
\end{enumerate}
\end{thm}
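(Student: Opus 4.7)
The plan is to prove both uniqueness and existence by induction on the chain degree $k$. The base case $k = 0$ is forced by the endpoint conditions $\A_{-1}C_0(X) = 0$ and $\A_0 C_0(X) = C_0(X)$, which leave no freedom. For $k > 0$, a nonzero $c \in C_k(X)$ has $k$-dimensional support, so by property (1) we may replace $X$ by $Y = \zar{\supp c}$ and assume $\dim X = k$. By Hironaka, choose a resolution $\pi : \tilde X \to X$ and a normal crossing divisor $D \subset \tilde X$ whose real locus contains $\supp \partial(\pi^{-1}c)$; such a pair exists since one is free to blow up further to make any given semialgebraic set of dimension $\leq k-1$ normal-crossing. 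Property (2) then expresses membership $c \in \A_p C_k(X)$ in terms of $\partial(\pi^{-1}c) \in \A_p C_{k-1}(D)$, which is already determined by the inductive hypothesis. This settles uniqueness and also provides a recipe for construction.

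For existence one must verify the recipe is well-defined, which amounts to independence under two choices: the subvariety $Y$ in property (1), and the resolution $\pi$ in property (2). Independence of $Y$ is straightforward: any two permissible subvarieties both contain $\zar{\supp c}$, and filtrations on resolutions of them may be compared on a common refinement factoring through a resolution of $\zar{\supp c}$. Independence of $\pi$ is the substantive point. By the Chow--Hironaka dominance lemma (cited from \cite{navarro} (2.1.3) in the introduction), any two admissible resolutions $\pi_1, \pi_2$ of $X$ are dominated by a common third factoring through both, so it suffices to show invariance of the filtration under a single further blowup $\sigma : \tilde X' \to \tilde X$ along a smooth center having normal crossings with the existing NCD $D$. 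Once this invariance is in hand, the defining properties (1) and (2) hold for the constructed filtration essentially by construction.

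The main obstacle is this blowup invariance. Writing $D' = \sigma^{-1}(D) \cup E$ with $E$ the exceptional divisor, one must prove that $\partial(\pi^{-1}c) \in \A_p C_{k-1}(D)$ if and only if $\partial(\sigma^{-1}\pi^{-1}c) \in \A_p C_{k-1}(D')$. This forces the induction to carry along, together with the definition of $\A_*$, enough concrete geometric information about the filtration on chains of an NCD. I would strengthen the inductive hypothesis to include a stratum-by-stratum description of $\A_*$ on an NCD: on the smooth part of a component the filtration should coincide with the canonical one, while on deeper intersection strata it should be governed by the combinatorics of components. The exceptional divisor $E$, being a projective bundle over the blown-up smooth center $C$ with $\dim C < k$, contributes chains whose filtration level is controlled by induction on $\dim C$, and the strict-transform correction then matches across $\sigma$ by a stratum-matching argument. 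This blowup comparison is what I expect to be genuinely technical; the rest of the proof is formal bookkeeping around it.
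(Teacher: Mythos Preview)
Your overall architecture---induction on $k$, reduction via property (1) to $\dim X = k$, reduction of resolution-independence to a single blowup---matches the paper. But there are two genuine gaps.

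First, you omit \emph{functoriality}. Chow--Hironaka does not produce a third resolution factoring through both $\pi_1$ and $\pi_2$ by sequences of smooth-center blowups. What it gives is a tower $\sigma:\tilde X_1'\to \tilde X_1$ of smooth blowups such that $\pi_1\circ\sigma$ factors through $\tilde X_2$ via some proper birational $\rho:\tilde X_1'\to \tilde X_2$. The blowup lemma handles the $\sigma$ side, but to pass to $\tilde X_2$ you must know that $\rho_*$ preserves $\A_p$ in degree $k-1$. The paper therefore proves, \emph{as part of the induction}, that $f_*(\A_pC_m)\subset \A_pC_m$ for all proper regular $f$ and all $m<k$ (their Proposition 2.3). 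Without this, the domination argument does not close, and your proposal never mentions it.

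Second, your blowup-invariance strategy is not the paper's and is not clearly viable. You propose to strengthen the inductive hypothesis by a ``stratum-by-stratum description'' of $\A_*$ on an NCD and to control the exceptional divisor $E$ as a projective bundle over the center $C$. The paper avoids any such structural description. Instead it uses a component lemma: $c\in\A_pC_m(\bigcup X_i)$ iff each restriction $c|_{X_i}\in\A_pC_m(X_i)$. Writing $D=\bigcup D_i$ and $\tilde D=E\cup\bigcup\tilde D_i$, the restriction $\pi|_{\tilde D_i}:\tilde D_i\to D_i$ is itself the blowup along $C\cap D_i$, so the inductive hypothesis (applied to the lower-dimensional $D_i$) handles each $\tilde D_i$. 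The exceptional piece $E$ is then forced for free by $\partial\circ\partial=0$: since $0=\partial(\partial\pi^{-1}c)=\sum_i\partial(\partial\pi^{-1}c|_{\tilde D_i})+\partial(\partial\pi^{-1}c|_E)$, membership of the $\tilde D_i$-parts in $\A_p$ implies it for the $E$-part. This is the actual mechanism, and nothing about projective bundles or deeper stratum combinatorics is needed.
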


\medskip
\noindent
We call a resolution $\pi : \tilde X \to X$ \emph{adapted} 
to $c\in C_k (X)$ if it satisfies condition (2) above.  For the definition of the support $\supp c$
and the
pullback $\pi^{-1}c$ see the Appendix.

\begin{proof}
We proceed by induction on $k$.  If $k=0$ then 
$0 =\A_{-1} C_0 (X) \subset \A_{0} C_0 (X) = C_0 (X)$.  In the rest of this subsection 
we assume the existence and uniqueness of the filtration for chains of dimension $<k$, and  
we prove the statement for  chains of dimension $k$.

\begin{lem}\label{components} 
Let $X= \bigcup_{i=1}^s X_i$ where $X_i$ are subvarieties of $X$. 
Then for $m<k$, 
$$ c\in \A_p C_m(X) \Longleftrightarrow \forall_i\  c|_{X_i} \in \A_p C_m(X_i) $$
\end{lem}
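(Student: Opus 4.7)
The plan is to prove Lemma \ref{components} by induction on the chain dimension $m$, using axioms (1) and (2) of Theorem \ref{axioms} together with resolution of singularities. By the outer induction on $k$, both axioms are available in every dimension strictly less than $k$. The base case $m = 0$ is immediate from $\A_{-1} C_0 = 0$ and $\A_0 C_0 = C_0$.

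For the inductive step, first reduce the setup with axiom (1): replace $X$ by the Zariski closure of $\supp c$ and each $X_i$ by the Zariski closure of $\supp c|_{X_i}$. We may then assume $\dim X = m$ and $\dim X_i = m$ for every $i$, so that each $X_i$ is a union of $m$-dimensional irreducible components of $X$. By Hironaka we can choose a single resolution $\pi : \tilde X \to X$ and a normal crossing divisor $D \subset \tilde X$ simultaneously adapted to $c$ and to every $c|_{X_i}$. Because any resolution factors through normalization, $\tilde X = \bigsqcup_\alpha \tilde W_\alpha$ splits as the disjoint union of smooth resolutions $\tilde W_\alpha \to W_\alpha$ of the irreducible components of $X$. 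Writing $A_i = \{\alpha : W_\alpha \subset X_i\}$, we get $\tilde X_i := \pi\inv(X_i) = \bigsqcup_{\alpha \in A_i} \tilde W_\alpha$ at the $m$-dimensional level, and $D_i := D \cap \tilde X_i$ is a normal crossing divisor in $\tilde X_i$ with $D = \bigcup_i D_i$.

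By axiom (2),
$$ c \in \A_p C_m(X) \iff \partial(\pi\inv c) \in \A_p C_{m-1}(D), $$
$$ c|_{X_i} \in \A_p C_m(X_i) \iff \partial(\pi\inv(c|_{X_i})) \in \A_p C_{m-1}(D_i). $$
The key chain-level identity is $\partial(\pi\inv c)|_{D_i} = \partial(\pi\inv(c|_{X_i}))$ in $C_{m-1}(D_i)$: this holds because $\pi\inv(c|_{X_i}) = (\pi\inv c)|_{\tilde X_i}$, and because each $\tilde W_\alpha$ is a connected component of $\tilde X$, so that the boundary $\partial((\pi\inv c)|_{\tilde W_\alpha})$ stays inside $\tilde W_\alpha$. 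Applying the inductive hypothesis of the lemma to the $(m-1)$-chain $\partial(\pi\inv c)$ on the cover $D = \bigcup_i D_i$ yields
$$ \partial(\pi\inv c) \in \A_p C_{m-1}(D) \iff \partial(\pi\inv c)|_{D_i} \in \A_p C_{m-1}(D_i) \text{ for every } i, $$
and chaining these equivalences together gives the desired statement.

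The main obstacle is the simultaneous arrangement of a resolution $\pi$ and an NCD $D \subset \tilde X$ adapted both to $c$ and to every $c|_{X_i}$, with the $\tilde X_i$ being unions of whole components of $\tilde X$ and each $D_i$ a normal crossing divisor in $\tilde X_i$; this is a standard but delicate use of Hironaka, requiring iterated blowups until all boundary supports and overlaps between components are absorbed into $D$. Once this geometry is in place, the identification $\partial(\pi\inv c)|_{D_i} = \partial(\pi\inv(c|_{X_i}))$ and the passage to dimension $m-1$ go through directly, and the induction closes.
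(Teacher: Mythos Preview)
Your proposal is correct and follows essentially the same approach as the paper. The paper's proof is two terse sentences: reduce via axiom (1) to $\dim X = m$ with the $X_i$ the distinct $m$-dimensional (irreducible) components, and then observe that an adapted resolution of $X$ is exactly a disjoint collection of adapted resolutions of the components. Your argument is a careful elaboration of the same idea: you make the induction on $m$ explicit, you keep the $X_i$ as unions of irreducible components rather than reducing all the way to the irreducible case, and you verify the chain-level identity $\partial(\pi^{-1}c)|_{D_i}=\partial(\pi^{-1}(c|_{X_i}))$ in detail. The difference is purely in presentation.
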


\begin{proof}
By (1) we may assume that $\dim X=m$ and then that all $X_i$ are distinct of dimension $m$.  Thus an adapted 
resolution of $X$ is a collection of adapted resolutions of each component of $X$.  
\end{proof}

See the Appendix for the definition of the restriction $c|_{X_i}$.

\begin{prop}\label{functoriality}
The filtration $\A_p$ given by Theorem \ref{axioms} is functorial; that is, for a regular 
morphism $f:X\to Y$ of compact real algebraic varieties, 
$f_* (\A_p C_m (X)) \subset \A_p C_m(Y)$, for $m<k$.
\end{prop}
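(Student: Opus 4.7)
The plan is to prove the proposition by induction on $m$, for $0 \le m < k$. The base case $m=0$ is immediate, since $\A_p C_0$ equals $C_0$ for $p \ge 0$ and is trivial for $p<0$, so $f_*$ preserves the filtration for formal reasons. For the inductive step, I assume functoriality for chains of dimension $<m$, and consider $c \in \A_p C_m(X)$. By Lemma \ref{components} together with property (1) of Theorem \ref{axioms}, I may replace $X$ by the Zariski closure $\zar{\supp c}$, reducing to the case $\dim X = m$ with $X$ irreducible (working component by component). Similarly, replacing $Y$ by $\zar{f(\supp c)}$, I may assume $Y$ is the closure of the image of the support. If $\dim Y < m$ then $f_*(c)$ vanishes as an $m$-chain in a variety of smaller real dimension, so there is nothing to prove; otherwise $\dim Y = m$ and $f$ is generically finite on $X$.

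Next, I would invoke Hironaka's theorem on resolution of singularities, applied to $X$, $Y$, and the graph of $f$, to produce a commutative square
\[
\begin{CD}
\tilde X @>\tilde f>> \tilde Y \\
@V\pi_XVV @VV\pi_YV \\
X @>f>> Y
\end{CD}
\]
with $\pi_X$, $\pi_Y$ resolutions and normal crossing divisors $D_X \subset \tilde X$, $D_Y \subset \tilde Y$, arranged so that $\pi_X$ is adapted to $c$ (that is, $\supp \partial \pi_X^{-1}(c) \subset D_X$) and $\tilde f(D_X) \subset D_Y$. The heart of the argument is the chain-level identity
\[
\pi_Y^{-1}(f_*(c)) = \tilde f_*(\pi_X^{-1}(c))
\]
in $C_m(\tilde Y)$. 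Both sides push forward to $f_*(c)$ under $\pi_{Y*}$, so their difference lies in the degree-$m$ kernel of $\pi_{Y*}$; this kernel is supported in the exceptional locus of $\pi_Y$, which has real dimension strictly less than $m$, and must therefore vanish. Taking boundaries gives $\partial \pi_Y^{-1}(f_*(c)) = \tilde f_*(\partial \pi_X^{-1}(c))$, a chain supported in $\tilde f(D_X) \subset D_Y$, so $\pi_Y$ is automatically adapted to $f_*(c)$.

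To close the induction, observe that $\partial \pi_X^{-1}(c) \in \A_p C_{m-1}(D_X)$ by property (2) of Theorem \ref{axioms} applied to $c$ via the adapted $\pi_X$. The inductive hypothesis, applied to the regular morphism $\tilde f|_{D_X}\colon D_X \to D_Y$ of compact varieties, yields $\tilde f_*(\partial \pi_X^{-1}(c)) \in \A_p C_{m-1}(D_Y)$, hence $\partial \pi_Y^{-1}(f_*(c)) \in \A_p C_{m-1}(D_Y)$, and a second application of property (2) concludes that $f_*(c) \in \A_p C_m(Y)$. The main obstacle will be the construction of the compatible diagram of resolutions: one needs simultaneously $\pi_X$ adapted to $c$ and $\tilde f(D_X)$ contained in a normal crossing divisor $D_Y$, which typically requires iterating Hironaka's principalization and further blowing up $\tilde X$ (and perhaps $\tilde Y$) to eliminate the indeterminacy of the resulting rational map and arrange the normal crossing condition without destroying the adapted property on the $X$ side.
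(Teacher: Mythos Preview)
Your proof is correct and follows essentially the same inductive strategy as the paper: reduce to $\dim X=\dim Y=m$ with $f$ dominant, lift to a commutative square of resolutions, use the identity $\tilde f_*(\pi_X^{-1}c)=\pi_Y^{-1}(f_*c)$, and apply the inductive hypothesis to $\tilde f_*\partial(\pi_X^{-1}c)$.

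The one place you work harder than necessary is the construction of the square. You try to arrange $\tilde f(D_X)\subset D_Y$ so that adaptedness of $\pi_Y$ to $f_*c$ is \emph{derived}, and you correctly flag this as the delicate point. The paper sidesteps this entirely: it first chooses $\pi_Y:\tilde Y\to Y$ adapted to $f_*c$ (which exists independently of anything on the $X$ side), and only then chooses a resolution $\pi_X:\tilde X\to X$ adapted to $c$ that also dominates the rational map $X\dashrightarrow\tilde Y$, giving the morphism $\tilde f$. No compatibility between $D_X$ and $D_Y$ is needed, so your ``main obstacle'' disappears. Otherwise the two arguments are the same; in particular, the paper uses the identity $\tilde f_*(\pi_X^{-1}c)=\pi_Y^{-1}(f_*c)$ in exactly the way you do (and your justification via $\ker\pi_{Y*}$ in top degree is the right one).
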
 

\begin{proof}
We prove that if the filtration satisfies the statement of Theorem \ref{axioms} 
for chains of dimension $<k$ and is functorial on chains of dimension $<k-1$ 
then it is functorial on chains of dimension $k-1$.  

Let $c\in C_{k-1} (X)$, and let $f:X\to Y$ be a regular morphism of compact 
real algebraic varieties.  By (1) of Theorem \ref{axioms} we may assume 
$\dim X = \dim Y = k-1$ and by Lemma \ref{components} that $X$ and $Y$ are irreducible.  
We may assume that $f$ is dominant; otherwise $f_* c =0$.  
 Then there exists a commutative diagram 
 \begin{equation*}\minCDarrowwidth 1pt\begin{CD}
\tilde X @>\tilde f >> \tilde Y \\
 @V\pi_X VV @VV\pi_YV \\
X @>f>> Y 
\end{CD}\end{equation*}  
where $\pi_X$ is a resolution of $X$ adapted to $c$ and $\pi_Y$ a resolution 
of $Y$ adapted to $f_* c$.   Then 
\begin{eqnarray*}
& c \in \A_p (X) \Leftrightarrow \partial (\pi_X \inv c) \in \A_p (\tilde X) \Rightarrow 
 \tilde f_* \, \partial (\pi_X \inv c)\in \A_p (\tilde Y), \\
&  \tilde f_* \, \partial (\pi_X \inv c) = \partial \tilde f_* (\pi_X
 \inv c)= \partial (\pi_Y \inv f_* c), \\
& \partial (\pi_Y \inv f_* c) \in \A_p (\tilde Y)
\Leftrightarrow f_*c\in \A_p (Y) , 
\end{eqnarray*}
where the implication in the first line follows from the inductive assumption.  
\end{proof}

\begin{cor}\label{geomboundary}
The boundary operator $\partial$ preserves the filtration $\A_p$, 
$$
\partial \A_p C_m (X)) \subset \A_p C_{m-1}(X),
$$ 
for $m < k$.
\end{cor}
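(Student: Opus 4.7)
The plan is to invoke the just-established functoriality (Proposition \ref{functoriality}) to transport the computation of $\partial c$ through an adapted resolution. Given $c\in\A_p C_m(X)$ with $m<k$, I will reduce to the case where $c$ is top-dimensional on some subvariety $Y$ of dimension $m$, apply axiom (2) there to read off the filtration level of $\partial(\pi^{-1}c)$, and then push the result back down to $Y$ and to $X$.

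Concretely, choose an algebraic subvariety $Y\subset X$ of dimension $m$ with $\supp c\subset\YR$; such a $Y$ exists because $\supp c$ has semialgebraic dimension at most $m$. By axiom (1) of Theorem \ref{axioms}, $c\in\A_p C_m(Y)$. Pick a resolution $\pi:\tilde Y\to Y$ adapted to $c$, with normal crossing divisor $D\subset\tilde Y$ containing $\supp\partial(\pi^{-1}c)$. Axiom (2) gives $\partial(\pi^{-1}c)\in\A_p C_{m-1}(D)$, and a second application of (1), this time inside $\tilde Y$, upgrades this to $\partial(\pi^{-1}c)\in\A_p C_{m-1}(\tilde Y)$.

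Since $m-1<k$, Proposition \ref{functoriality} applies to the proper regular morphism $\pi:\tilde Y\to Y$ and yields $\pi_*\partial(\pi^{-1}c)\in\A_p C_{m-1}(Y)$. Proper pushforward commutes with $\partial$ and $\pi_*\pi^{-1}c=c$ (because $\pi$ is birational and $c$ is top-dimensional on $Y$), so this chain equals $\partial c$. One further application of (1) promotes $\partial c\in\A_p C_{m-1}(Y)$ to $\partial c\in\A_p C_{m-1}(X)$, completing the induction step.

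No substantive obstacle arises, because all the geometric content has been absorbed into axioms (1), (2) and Proposition \ref{functoriality}. The only non-formal point is the chain-level identity $\pi_*\partial(\pi^{-1}c)=\partial c$, which rests on the precise meaning of the pullback $\pi^{-1}c$ spelled out in the Appendix; this is standard for proper birational morphisms of varieties of the same dimension, and the $\Z_2$ coefficients eliminate any sign concerns.
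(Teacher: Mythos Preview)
Your argument is correct and follows essentially the same route as the paper's proof: reduce via axiom (1) to a subvariety of dimension $m$, use axiom (2) on an adapted resolution to get $\partial(\pi^{-1}c)\in\A_p$, and then push forward by Proposition~\ref{functoriality}. The only difference is cosmetic: the paper writes the proof in three lines and leaves implicit the passage to a subvariety $Y$ with $\dim Y=m$ (needed for ``adapted resolution'' to make sense), whereas you spell this reduction out.
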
 

\begin{proof}
Let $\pi : \tilde X\to X$ be a resolution of $X$ adapted to $c$.  Let 
$\tilde c= \pi \inv c$.  Then $c= \pi_* \tilde c$ and   
$$
c\in \A_p \Leftrightarrow \partial \tilde c \in \A_p \Rightarrow 
\partial c = \partial \pi_* \tilde c = \pi_* \partial \tilde c   \in \A_p .  
$$
\end{proof}

Let $c\in C_k (X)$, $\dim X=k$.  In order to show  that  
the condition (2) of Theorem \ref{axioms} is independent of the choice 
of $\tilde \pi$   we need the following  lemma.

\begin{lem}\label{blowing-up}
Let $X$ be a nonsingular compact real algebraic variety of dimension $k$ and 
let $D\subset X$ be a normal crossing divisor.  Let $c\in C_k (X)$ 
satisfy $\supp \partial c \subset D$.  Let $\pi : \tilde X \to X$ be the blowup 
of a nonsingular 
subvariety $C\subset X$ that has normal crossings with $D$.  Then 
$$
\partial c \in \A_p C_{k-1} (X) \Longleftrightarrow \partial (\pi \inv (c)) \in \A_p C_{k-1} (\tilde X).
$$
\end{lem}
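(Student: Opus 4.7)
\bigskip
\noindent\textbf{Proof plan.} I would argue by induction on $k$, treating the two implications separately. By the inductive hypothesis, the filtration, the axioms, and their consequences---Lemma \ref{components}, Corollary \ref{geomboundary}, and Proposition \ref{functoriality}---are available for chains of dimension strictly less than $k$.

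For the forward direction ($\Leftarrow$), assume $\partial(\pi\inv c)\in\A_p C_{k-1}(\tilde X)$. Because $\pi$ is a birational morphism between nonsingular compact varieties, $\pi_*\pi\inv c = c$ (with $\Z_2$ coefficients), and $\pi_*$ commutes with $\partial$, so $\pi_*(\partial\pi\inv c) = \partial c$. Proposition \ref{functoriality} applied to $\pi$ and the $(k-1)$-chain $\partial\pi\inv c$ then yields $\partial c\in\A_p C_{k-1}(X)$.

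For the reverse direction ($\Rightarrow$), assume $\partial c\in\A_p C_{k-1}(X)$. Let $E=\pi\inv(C)$ and let $D'\subset\tilde X$ be the strict transform of $D$. The normal crossings assumption on $C$ and $D$ ensures $\tilde D:=D'\cup E$ is a normal crossing divisor containing $\supp\partial(\pi\inv c)$; by axiom (1) it suffices to show $\partial(\pi\inv c)\in\A_p C_{k-1}(\tilde D)$. The strategy is to write
\begin{equation*}
\partial(\pi\inv c) \;=\; \alpha + \beta,
\end{equation*}
where $\alpha$, supported on $D'$, is the strict transform of $\partial c$ under $\pi|_{D'}\colon D'\to D$, and $\beta$, supported on $E$, is the new boundary produced by the blowup.

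For $\alpha$, use Lemma \ref{components} to reduce to a component-by-component analysis on $D=\bigcup D_i$. On each component $D_i$, the restriction $\pi|_{D_i'}$ is either an isomorphism or the blowup of $D_i$ along the smooth subvariety $D_i\cap C$, whose normal crossings with the remaining strata follow from the NC hypothesis. The inductive instance of the present lemma, applied to chains on the $(k-1)$-dimensional variety $D_i$, together with inductive functoriality, transfers $\partial c\in\A_p$ to $\alpha\in\A_p$. For $\beta$, apply axiom (1) to pass to $E$ and then axiom (2) via a resolution $\sigma\colon\hat E\to E$ adapted to $\beta$. A local analysis near $C$---exploiting the local product structure of $c$ afforded by the NC hypothesis and the fact that $\pi|_E\colon E\to C$ is a projective bundle---identifies $\beta$, up to chains of deeper filtration, with a chain pulled back from $(\partial c)|_C$ on $C$ (of dimension at most $k-2$), whose filtration level is controlled by the inductive hypothesis and by Proposition \ref{functoriality}. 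This last step, the careful control of the exceptional term $\beta$, is the most delicate part of the argument and the main obstacle I expect in completing the proof.
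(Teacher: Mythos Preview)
Your treatment of the implication $(\Leftarrow)$ via functoriality is fine, and in fact is more direct than the paper's route. Your handling of the strict-transform part $\alpha$ is also essentially the paper's argument: reduce to each component $D_i$ via Lemma~\ref{components}, note that $\pi_i=\pi|_{\tilde D_i}\colon \tilde D_i\to D_i$ is the blowup along $C\cap D_i$, and apply the inductive instance of the present lemma together with axiom~(2) (using the identity as resolution on the nonsingular $D_i$ and $\tilde D_i$) to pass from $\partial_i c\in\A_p$ to $(\partial\pi^{-1}c)|_{\tilde D_i}\in\A_p$.

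The gap is your treatment of the exceptional piece $\beta=(\partial\pi^{-1}c)|_E$. The ``local analysis near $C$'' you sketch---projective bundle structure of $E\to C$, identifying $\beta$ with a chain pulled back from $(\partial c)|_C$ up to deeper filtration---is not needed and would be hard to make precise (the NC hypothesis gives you normal-crossing combinatorics, not a clean local product structure of the chain $c$ itself). The paper bypasses all of this with a one-line algebraic observation: since $\partial\bigl(\partial(\pi^{-1}c)\bigr)=0$, one has
\[
\partial\bigl((\partial\pi^{-1}c)|_E\bigr)\;=\;\sum_i \partial\bigl((\partial\pi^{-1}c)|_{\tilde D_i}\bigr).
\]
Each summand on the right has already been shown to lie in $\A_p C_{k-2}$ by the strict-transform argument, hence so does the left-hand side. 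Then axiom~(2) applied to the nonsingular $(k-1)$-dimensional variety $E$ (with the identity as resolution, the boundary being supported in the NCD $E\cap\bigcup_i\tilde D_i$) gives $(\partial\pi^{-1}c)|_E\in\A_p C_{k-1}(E)$. So the ``most delicate part'' you flagged dissolves once you use $\partial^2=0$ to transfer control from the strict transforms to the exceptional divisor.
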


\begin{proof}
Let $\tilde D = \pi \inv (D)$.  Then $\tilde D= E\cup \bigcup \tilde D_i$, where 
$E=\pi \inv (C)$ is the exceptional divisor and $\tilde D_i$ denotes the strict 
transform of $D_i$.  By Lemma \ref{components}   
$$
\partial c \in \A_p C_{k-1} (X) \Longleftrightarrow \forall_i \,  \partial c|_{D_i} \in \A_p C_{k-1} (D_i). 
$$
Let $\partial_i c = \partial c|_{D_i}$.  The restriction $\pi_i=\pi|_{\tilde D_i} : \tilde D_i \to D_i$ is the blowup 
with smooth center $C\cap D_i$.  Hence, by inductive assumption, 
$$
\partial (\partial_i c )  \in \A_p C_{k-2} (D_i) \Longleftrightarrow  
\partial \pi _i \inv (\partial_i c ) =
\partial (\partial (\pi \inv (c))|_{\tilde D_i})  \in \A_p C_{k-2} ( \tilde D_i)
$$
By the inductive assumption of Theorem \ref{axioms}, 
$$
\partial (\partial_i c)  \in \A_p C_{k-2} (D_i)  \Longleftrightarrow \partial_i c  \in \A_p C_{k-1} (D_i) ,  
$$
and we have similar properties for $\partial (\pi \inv (c))|_{\tilde D_i}$ and $\partial (\pi \inv (c))|_{E}$.  

Thus to complete the proof it suffices to show that 
$$
\forall_i \, \partial (\partial (\pi \inv (c)|_{\tilde D_i} ) \in \A_p C_{k-2} (\tilde D_i) \Longrightarrow 
\partial (\partial (\pi \inv (c)|_{E} )\in \A_p C_{k-2} (E).
$$
This follows from $0= \partial (\partial \pi \inv (c)) = 
\partial \left(\sum_i \partial (\pi \inv (c)|_{\tilde D_i} + 
\partial (\pi \inv (c)|_{E}\right)$.
\end{proof}

\medskip
Let $\pi_i: X_i \to X,\ i=1,2,$ be two resolutions of $X$ adapted to $c$.  
Then there exists $\sigma: \tilde X_1 \to X_1$, the composition of finitely 
many blowups with smooth centers that have normal crossings with the strict 
transforms of all exceptional divisors, such that $\pi_1\circ \sigma$ factors through $X_2$,
\begin{equation*}\minCDarrowwidth 1pt\begin{CD}
\tilde X_1 @>\sigma>>  X_1 \\
 @V\rho VV @VV\pi_1V \\
X_2 @>\pi_2>> X
\end{CD}\end{equation*}  
By Lemma \ref{blowing-up}
$$
\partial (\pi_1 \inv (c)) \in \A_p C_{k-1} (X_1) \Leftrightarrow 
\partial (\sigma \inv (\pi_1 \inv (c))) \in \A_p C_{k-1} (\tilde X_1)  .
$$
On the other hand,
$$
\rho_* \partial (\sigma \inv (\pi_1 \inv (c))) 
= \rho_* \partial (\rho \inv (\pi_2 \inv (c))) 
= \partial (\pi_2 \inv (c)) , 
$$
and consequently by Proposition \ref{functoriality} we have
$$
\partial (\pi_1 \inv (c)) \in \A_p C_{k-1} (X_1) 
 \Longrightarrow \partial (\pi_2 \inv (c)) \in \A_p   C_{k-1} ( X_2).
$$
By symmetry 
$\partial (\pi_2 \inv (c)) \in \A_p(X)  \Rightarrow \partial (\pi_1 \inv (c)) \in \A_p(X) $. 
This completes the proof of Theorem \ref{axioms}.
\end{proof}
\medskip

\subsection{Properties of the filtration $\A_*$}

Let $U$ be a (not necessarily compact) real algebraic variety and let $X$ 
be a real algebraic compactification of $U$.   We extend the 
filtration $\A_p$ to $U$ as follows.   
If  $c\in C_*(U)$ let $\bar c \in C_*(X)$ be its closure. We define 
$$
c\in \A_p  C_k (U) \Leftrightarrow \bar c \in \A_p C_k(X).
$$
See the Appendix for the definition of the closure of a chain. 

\begin{prop}\label{Gcomp}
$\A_p  C_k (U)$ is well-defined; that is, for two compactifications $X_1$ and 
$X_2$ of $U$, 
$$
c_1\in \A_p  C_k (X_1) \Leftrightarrow c_2 \in \A_p C_k(X_2) ,
$$
where $c_i$ denotes the closure of $c$ in $X_i, i=1,2$.    
\end{prop}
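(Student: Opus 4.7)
The plan is to compare any two compactifications through a third that dominates both, and then reduce to the defining conditions of Theorem~\ref{axioms}. First, given compactifications $X_1$ and $X_2$ of $U$, I would construct a third compactification $X_3$ as the Zariski closure of the diagonal image of $U$ in $X_1 \times X_2$; the two projections yield proper regular morphisms $p_i : X_3 \to X_i$ restricting to the identity on $U$. By transitivity, this reduces the proposition to the special case where there is a single proper regular morphism $\pi : X_2 \to X_1$ that is the identity on $U$.

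In this special case $\pi$ is birational, and the key chain-level identity to establish is $\pi\inv(\bar c_1) = \bar c_2$: both are $k$-chains in $X_2$ that agree on the dense open set $\pi\inv(U) \cong U$, and after reducing to $\dim X_i = k$ their difference must vanish because it is supported on a set of dimension $<k$. This dimension reduction is legitimate by property~(1) of Theorem~\ref{axioms}: replacing each $X_i$ by the Zariski closure $Y_i$ of $\supp \bar c_i$, the morphism $\pi$ restricts to a proper birational morphism $Y_2 \to Y_1$ that is still the identity on the dense open $\supp c$. The case $k=0$ is trivial, so assume $k \geq 1$. I would then choose a resolution $\rho_2 : \tilde X \to X_2$ adapted to $\bar c_2$, with associated normal crossing divisor $D \subset \tilde X$ such that $\supp \partial(\rho_2\inv \bar c_2) \subset \DR$. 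The composition $\rho_1 := \pi \circ \rho_2 : \tilde X \to X_1$ is proper and birational with smooth source, hence a resolution of $X_1$, and the identity $\rho_1\inv(\bar c_1) = \rho_2\inv(\pi\inv \bar c_1) = \rho_2\inv(\bar c_2)$ shows that the same divisor $D$ makes $\rho_1$ adapted to $\bar c_1$. Condition~(2) of Theorem~\ref{axioms} applied on each side then yields
\[
\bar c_1 \in \A_p C_k(X_1) \Longleftrightarrow \partial(\rho_2\inv \bar c_2) \in \A_p C_{k-1}(D) \Longleftrightarrow \bar c_2 \in \A_p C_k(X_2),
\]
as desired.

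The main obstacle I anticipate lies in the compatibility of the closure and pullback operations on semialgebraic chains (the formal properties deferred to the Appendix) with the recursive structure of the filtration: specifically, the identity $\pi\inv(\bar c_1) = \bar c_2$ for a proper birational morphism, and the naturality of pullback in the composition $\pi \circ \rho_2$. Once these properties are in hand, everything else is a direct application of conditions~(1) and~(2) of Theorem~\ref{axioms}, together with Proposition~\ref{functoriality} on the inductive dimensions.
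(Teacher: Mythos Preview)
Your reduction to a dominating third compactification is identical to the paper's. After that the two arguments diverge. The paper handles the case of a morphism $f:X_2\to X_1$ restricting to the identity on $U$ asymmetrically: the implication $c_2\in\A_p\Rightarrow c_1\in\A_p$ comes directly from $c_1=f_*(c_2)$ and Proposition~\ref{functoriality}; for the converse the paper invokes the Chow--Hironaka Lemma to produce a resolution $\pi_1:\tilde X_1\to X_1$ adapted to $c_1$ that \emph{factors through} $f$, say $\pi_1=f\circ g$, and then pushes forward again, $c_2=g_*(\pi_1^{-1}c_1)$.

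Your route is more symmetric: you build the common resolution from the $X_2$ side, taking any $\rho_2:\tilde X\to X_2$ adapted to $\bar c_2$ and observing that $\rho_1:=\pi\circ\rho_2$ is automatically a resolution of $X_1$ adapted to $\bar c_1$, because $\rho_1^{-1}\bar c_1=\rho_2^{-1}\bar c_2$ (equality of top-dimensional chains agreeing on a dense open). Both memberships then reduce to the single condition $\partial(\rho_2^{-1}\bar c_2)\in\A_p C_{k-1}(D)$ via condition~(2) of Theorem~\ref{axioms}. This is correct and has the pleasant feature of bypassing both the explicit call to Proposition~\ref{functoriality} and the Chow--Hironaka Lemma; the paper's version, on the other hand, stays closer to the inductive machinery already set up and makes the role of pushforward functoriality explicit. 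The chain identities you flag as obstacles ($\pi^{-1}\bar c_1=\bar c_2$ and its compatibility with composition) are indeed routine once one reduces to $\dim X_i=k$, exactly as you say.
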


\begin{proof}
We may  assume that $k=\dim U$. 
By a standard argument any two compactifications can be dominated by a third one.  Indeed, 
denote the inclusions by $i_i : U \hookrightarrow X_i$.  Then 
the Zariski closure $X$ of the image of $(i_1,i_2)$ in $X_1\times X_2$ is a compactification 
of $U$.  

Thus we may assume that there is a morphism $f:  X_2\to X_1$ that is the identity on $U$.  
 Then, 
by functoriality, 
$$
c_2\in \A_p  C_k (X_2) \Rightarrow c_1=f_*(c_2) \in \A_p C_k(X_1) .
$$ 
By the Chow-Hironaka Lemma there is a resolution $\pi_1: \tilde X_1\to X_1$, adapted to $c_1$,  that factors through $f$; that is,
$\pi_1= f\circ g$.  Then 
$$
c_1\in \A_p  C_k (X_1) \Leftrightarrow  \pi_1 \inv (c_1)\in \A_p  C_k (\tilde X_1) 
\Longrightarrow c_2=g_*( \pi_1 \inv (c_1)) \in \A_p C_k(X_2) .
$$ 
\end{proof}

\begin{thm} \label{geomproperties} 
The filtration $\A_*$ defines a functor  $\A C_* :\Sch \to \Cat$ with the following properties:
\begin{enumerate}
\item 
For an acyclic square (\ref{acyclic})
the following  sequences are exact:
\begin{eqnarray*} \label {acexact1}
& &  0\to \A_p  C_{k} (\tilde Y) \to \A_p C_k (Y) \oplus \A_p  C_{k} (\tilde X) \to 
\A_p C_{k} (X) \to 0 \\ \label {acexact2}
& &  0\to \frac{\A_pC_{k} (\tilde Y)}{\A_{p-1}  C_{k} (\tilde Y)} \to 
\frac{\A_pC_k (Y)}{\A_{p-1} C_k (Y)} \oplus \frac {\A_p C_{k} (\tilde X)}{\A_{p-1}  C_{k} (\tilde X)} \to 
\frac{\A_pC_{k} (X)}{\A_{p-1} C_{k} (X)} \to 0  . 
\end{eqnarray*} 
\item 
 For a closed inclusion $Y\subset X$,  with $U=X\setminus Y$, the following sequences are exact:
\begin{eqnarray*} \label {exact1}
& &  0\to \A_p  C_{k} (Y) \to \A_p  C_{k} (X) \to 
\A_p C_{k} (U) \to 0 \\ \label {exact2}
& & 0\to \frac{\A_pC_{k} (Y)}{\A_{p-1}  C_{k} (Y) }\to \frac{\A_p C_{k} (X)}{\A_{p-1}  C_{k} (X)} \to 
\frac{\A_pC_{k} (U) }{\A_{p-1} C_{k} (U)} \to 0  . 
\end{eqnarray*}
\end{enumerate}   
\end{thm}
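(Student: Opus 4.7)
The plan is to prove assertion (2) first in the compact case, then deduce (1) in the compact case from (2) using the isomorphism $\pi\colon \tilde X\setminus \tilde Y\to X\setminus Y$, then extract the graded-quotient sequences by a $3\times 3$ lemma, and finally extend everything to the non-compact setting via compactifications.

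For compact (2), I apply Lemma \ref{components} to the decomposition $X=Y\cup \overline{X\setminus Y}$ into two Zariski-closed subvarieties of $X$. (Although Lemma \ref{components} is stated there for chain dimensions $m<k$, its proof extends verbatim to $m=k$ once Theorem \ref{axioms} has been established in all dimensions.) For $c\in \A_p C_k(X)$ this yields $c|_Y\in \A_p C_k(Y)$ and $c|_{\overline{X\setminus Y}}\in \A_p C_k(\overline{X\setminus Y})$. Since $c|_{\overline{X\setminus Y}}$ is by definition the closure in $X$ of $c|_U$, and the filtration on $U$ is defined precisely through such closures (Proposition \ref{Gcomp}), the restriction map $\A_p C_k(X)\to \A_p C_k(U)$ is well-defined. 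Surjectivity is automatic: any $c'\in \A_p C_k(U)$ is the restriction of its closure $\bar{c'}\in \A_p C_k(X)$. Middle exactness follows from condition (1) of Theorem \ref{axioms}, which identifies $\A_p C_k(X)\cap C_k(Y)$ with $\A_p C_k(Y)$, and injectivity is trivial.

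For compact (1), applying (2) to both $Y\subset X$ and $\tilde Y\subset \tilde X$ gives two short exact sequences. Since $\pi|_{\tilde U}\colon\tilde U\to U$ is a regular isomorphism, Proposition \ref{functoriality} applied to both $\pi$ and its inverse shows that $\pi_*\colon \A_p C_k(\tilde U)\to \A_p C_k(U)$ is a filtered bijection. To prove surjectivity of the Mayer--Vietoris map $\A_p C_k(Y)\oplus \A_p C_k(\tilde X)\to \A_p C_k(X)$: given $c\in \A_p C_k(X)$, let $\tilde c'\in \A_p C_k(\tilde U)$ be the preimage of $c|_U$ under this isomorphism, let $\tilde c\in \A_p C_k(\tilde X)$ be its closure, and set $y:=c+\pi_*\tilde c$ (addition modulo $2$). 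By construction $y$ is supported in $Y$ and lies in $\A_p C_k(X)$, hence in $\A_p C_k(Y)$ by condition (1). Injectivity and exactness in the middle follow from the underlying chain-level short exact sequence together with condition (1). The graded (second) sequences in both (1) and (2) then follow from the ungraded ones by a $3\times 3$ lemma on the filtration steps $\A_{p-1}\subset \A_p$.

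For the non-compact case, I compactify $X$ to $\bar X$, let $\bar Y$ be the Zariski closure of $Y$ in $\bar X$, and set $Z=\bar X\setminus X$. The compact case of (2) applied to $\bar Y\subset \bar X$ and to $\bar Y\cap Z\subset Z$, together with the fact that $\A_p C_k$ of a non-compact open subset is the cokernel of $\A_p C_k$ of the complementary boundary (itself a consequence of compact (2) applied inside the compactification), yields the non-compact sequence of (2) by the snake lemma; the non-compact case of (1) follows similarly. The main technical obstacle is the verification of the extension of Lemma \ref{components} to $m=k$, and the bookkeeping required to ensure that closures, restrictions, and proper pushforwards commute compatibly with the filtration in the non-compact setting, where the filtration is defined only relative to a choice of compactification.
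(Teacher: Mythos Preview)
Your argument is correct and follows the same route as the paper, whose proof is essentially the one-line remark that the first sequence of (2) ``follows directly from the definitions (moreover, this sequence splits via $c\mapsto \bar c$),'' and that the remaining sequences follow by diagram chases. You have simply filled in the details the authors omit: using Lemma~\ref{components} (extended to $m=k$) to show that restriction to $U$ preserves the filtration, the splitting by closure for surjectivity, condition~(1) of Theorem~\ref{axioms} for middle exactness, and a $3\times 3$ argument for the graded sequences.

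One citation should be corrected. In your treatment of the acyclic square you write that Proposition~\ref{functoriality} applied to $\pi|_{\tilde U}$ \emph{and its inverse} shows that $\pi_*:\A_p C_k(\tilde U)\to\A_p C_k(U)$ is a filtered bijection. But Proposition~\ref{functoriality} is proved only for morphisms of \emph{compact} varieties, and $(\pi|_{\tilde U})^{-1}$ does not extend to a regular map $X\to\tilde X$. The fact you need is instead Proposition~\ref{Gcomp}: since both $X$ and $\tilde X$ are compactifications of $U\cong\tilde U$, the filtration on $U$ computed via either agrees, which is precisely the statement that $\A_p C_k(\tilde U)\to\A_p C_k(U)$ is a filtered isomorphism.

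Your reduction of the non-compact case to the compact one via the snake lemma is correct but heavier than necessary. Once you know that $\bar Y\cap X=Y$ (so that $\bar X\setminus U=(\bar X\setminus X)\cup\bar Y$ is Zariski closed and $\bar X$ compactifies $U$ as well as $X$), the non-compact sequence in (2) follows immediately from the compact sequence for the inclusion $(\bar X\setminus U)\subset\bar X$ together with condition~(1) of Theorem~\ref{axioms}, without any further diagram chasing.
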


\begin{proof}
The exactness of the first sequence of (2) follows directly from the definitions  
(moreover, this sequence splits via $c\mapsto \bar c$).  The exactness of the second sequence of (2) 
now follows by a diagram chase. Similarly, the exactness of the first
sequence of (1) follows from the definitions, and the exactness of the second sequence of (1)
is proved by a diagram chase.
\end{proof}

%

For any variety  $X$, the filtration  $\A_*$ is contained in the canonical filtration,
\begin{equation}\label{geomcan}
\A_p C_k (X) \subset F^{can}_p C_k (X),
\end{equation}
since $\partial_k(\A_{-k}C_k(X))= 0$.
Thus on the category of nonsingular projective varieties we have a morphism of functors
$$
\sigma: \A C_*  \to F^{can} C_*.
$$
\begin{thm}\label{geomiso}
For every nonsingular projective real algebraic variety $M$,   
$$
\sigma (M): \A C_*(M)  \to F^{can} C_*(M)
$$ 
is a filtered quasi-isomorphism.  Consequently, for every real algebraic variety $X$ 
the localization of $\sigma$ induces a quasi-isomorphism 
$\sigma' (X): \A C_* (X) \to \mathcal WC_*(X)$.   
\end{thm}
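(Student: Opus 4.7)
The plan is first to establish the statement for $M$ smooth projective, and then to pass to arbitrary $X \in \Sch$ via the uniqueness statement of Theorem \ref{naturality} combined with the acyclicity and additivity of $\A C_*$ established in Theorem \ref{geomproperties}.

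For $M$ smooth projective, the inclusion $\A_p C_k(M) \subseteq F^{can}_p C_k(M)$ from (\ref{geomcan}) makes $\sigma(M)$ a morphism of filtered complexes, so what remains is to show it induces an isomorphism on $E^1$. By (\ref{canonical}), $E^1_{p,q}(F^{can}, M) = H_{-p}(M)$ if $q = -2p$ and vanishes otherwise, so the goal becomes to show that $E^1_{p,q}(\A, M)$ has the same shape and that $\sigma$ provides the identification. Two basic identifications can be recorded up front: $\A_{-k} C_k(M) = Z_k(M) = F^{can}_{-k} C_k(M)$ for all $k$, and $\A_p C_k(M) = 0$ when $k < -p$. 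The first follows by induction on $k$ from Theorem \ref{axioms}: by property (1) one passes to a subvariety $Y$ of dimension $k$ containing $\supp c$, and property (2) applied to an adapted resolution $\pi:\tilde Y \to Y$ turns $c \in \A_{-k} C_k(Y)$ into $\partial(\pi^{-1}c) \in \A_{-(k-1)-1} C_{k-1}(D) = 0$, forcing $\partial c = 0$. The second is immediate from the boundary condition $\A_{-k-1} C_k = 0$.

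The heart of stage 1 is the analysis of $E^0_{p,\ast}(\A, M) = \A_p C_\ast(M)/\A_{p-1} C_\ast(M)$: one wants its homology concentrated in degree $-p$ with value $H_{-p}(M)$. Since $\A_p C_{-p} = Z_{-p}$ and $\A_{p-1} C_{-p} = 0$, the degree $-p$ term is already $Z_{-p}(M)$, matching $F^{can}$. Via the long exact sequence attached to the short exact sequence $0 \to \A_{p-1} C_* \to \A_p C_* \to \A_p C_*/\A_{p-1} C_* \to 0$, the problem reduces to a family of surjectivity claims: for each $k \geq 1$, every $(k-1)$-boundary in $M$ is of the form $\partial c'$ for some $c' \in \A_{-k+1} C_k(M)$; equivalently, every chain $c \in C_k(M)$ is congruent modulo $Z_k(M)$ to a chain in $\A_{-k+1} C_k(M)$. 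I would prove this cycle-correction statement by induction on $\dim M$: choose a resolution $\pi : \tilde M \to M$ adapted to $c$ with normal crossings divisor $D = \bigcup_i D_i$; by property (2) and Lemma \ref{components}, the condition $c + z \in \A_{-k+1} C_k(M)$ unfolds into the requirement that $(\partial \pi^{-1}(c+z))|_{D_i}$ be a cycle in $D_i$ for every smooth component, and the inductive hypothesis on each $D_i$, combined with the global constraint that $\partial(\partial \pi^{-1}(c+z)) = 0$ forces the contributions on double intersections $D_i \cap D_j$ to balance, allows construction of the correcting cycle $z$.

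Stage 2 is essentially formal. Theorem \ref{geomproperties} says $\A C_* : \Sch \to \Cat$ is acyclic for acyclic squares and additive for closed inclusions, so its localization to $\HCat$ satisfies conditions (F1) and (F2) of \cite{navarro}; by definition $\WC_*|_{\V} = F^{can} C_*|_{\V}$. Stage 1 identifies $\sigma|_{\V}$ as a filtered quasi-isomorphism between $\A C_*|_{\V}$ and $\WC_*|_{\V}$. Theorem \ref{naturality} applied to $\tau = \sigma|_{\V}$ extends $\sigma$ uniquely to $\sigma' : \A C_*(X) \to \WC_*(X)$ in $\HCat$, and the remark immediately following Theorem \ref{naturality} guarantees that $\sigma'$ is a quasi-isomorphism for every $X \in \Sch$. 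The principal obstacle is the cycle-correction construction in stage 1, where the combinatorics of the components of the normal crossings divisor interact nontrivially with the boundary operator on semialgebraic chains, and any honest treatment must carry out this chain-level resolution argument with care.
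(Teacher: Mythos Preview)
Your stage 2 is correct, and its strategy---pass from $\V$ to $\Sch$ via Theorem \ref{naturality} once the comparison is known on smooth projective varieties---is shared with the paper. The gap is in stage 1.

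The identification $\A_{-k} C_k(M) = Z_k(M)$ is false for $k < \dim M$. Your argument proves only the inclusion $\A_{-k} C_k(M) \subset Z_k(M)$: property (2) of Theorem \ref{axioms} gives $c \in \A_{-k} C_k(Y) \Leftrightarrow \partial(\pi^{-1}c) = 0$, whence $\partial c = \pi_*\partial(\pi^{-1}c) = 0$. But $\partial c = 0$ does not force $\partial(\pi^{-1}c) = 0$. For a counterexample, let $Y$ be a real projective nodal cubic whose set of real points is a figure eight, sitting in a smooth projective surface $M$, and let $c$ be one of the two loops. Then $c \in Z_1(M)$, yet on the normalization $\pi:\tilde Y \to Y$ the chain $\pi^{-1}c$ is an arc with boundary the two preimages of the node, so $c \notin \A_{-1}C_1(M)$. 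The same example kills your cycle-correction claim: one of the two loops bounds in $M$, say $b=\partial e$; but for $k=\dim M$ one has $e'\in\A_{-k+1}C_k(M)\Leftrightarrow \partial e'\in\A_{-k+1}C_{k-1}$, so \emph{every} $e'$ with $\partial e'=b$ lies outside $\A_{-1}C_2(M)$, since $b$ is not pure. What is genuinely needed is that every homology class has a representative in $\A_{-k}C_k(M)$---the arc-symmetric representability theorem of \cite{kucharz} and \cite{aussois}---and this is not a formal consequence of the recursion in Theorem \ref{axioms}.

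The paper therefore takes a completely different route. It introduces a second filtration $\Na_*$ via Nash constructible functions, extends $\Na C_*$ to the category of compact Nash manifolds, and proves $\Na C_*(N)\to F^{can}C_*(N)$ is a filtered quasi-isomorphism for every such $N$ (Theorem \ref{isoNash}) by invoking Mikhalkin's theorem that any two connected closed manifolds of the same dimension are linked by blowups and blowdowns; acyclicity of $\Na C_*$ on blowup squares then reduces everything to an explicit computation on $S^n$. Only afterwards is $\Na_* = \A_*$ established (Corollary \ref{2filtrations}), and that argument \emph{uses} the purity of $\Na_*$ just obtained. The detour through Nash manifolds is not incidental: it is what supplies the representability statement that your direct induction cannot reach.
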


Theorem \ref{geomiso} follows from Corollary \ref{iso} and Corollary \ref{2filtrations}, which 
will be shown  in the next section.



\section{The Nash constructible filtration}\label{nash}
\medskip

In this section we  introduce the \emph{Nash 
constructible filtration} 
\begin{equation}\label{Nashfiltration}
0 = \Na_{-k-1 } C_k (X) \subset  \Na_{-k} C_k (X) \subset \Na_{-k+1}  
C_k (X) \subset \cdots  \subset \Na_{0} C_k (X) = C_k (X)
\end{equation} 
on the semialgebraic chain complex $C_*(X)$ of a real algebraic variety $X$.  We show that this filtration induces a functor 
$$
\Na C_* : \Sch \to \Cat 
$$ 
that realizes the  weight complex functor $\mathcal W C_* : \Sch \to \HCat$.  In order to prove this assertion in
Theorem \ref{iso}, we have to extend $\Na C_*$ to a wider category of sets and morphisms.  The objects of this
category are certain semialgebraic subsets of the set of real points of a real
algebraic variety, and they include in particular all connected  
components of real algebraic subsets of $\R \proj ^n$. The morphisms are certain proper
continuous semialgebraic maps between these sets. This extension 
is crucial for the proof.    
As a corollary we show that for  real algebraic varieties the Nash constructible filtration $\Na_*$ coincides 
with the geometric filtration $\A_*$ of Section \ref{geometricfiltration}.  In this way we complete the proof of Theorem \ref{geomiso}.

For real algebraic varieties, the Nash constructible filtration was first defined in an unpublished paper of H. Pennaneac'h \cite{penn2}, 
by analogy with the algebraically constructible filtration (\cite{penn1}, \cite{penn3}).  Theorem \ref{iso} gives, in particular, that the Nash constructible filtration of a
compact variety is the same as 
the filtration  given by a cubical hyperresolution; this answers affirmatively a question of Pennaneac'h  
(\cite{penn2} (2.9)). 

 

\smallskip
\subsection{Nash constructible functions on $\R \proj ^n$ and arc-symmetric sets}

In real algebraic geometry it is common to work with real algebraic 
subsets of the affine space $\R^n\subset \R \proj ^n$ instead of schemes over $\R$, and with 
(entire) regular  rational 
mappings as morphisms; see for instance \cite{akbulutking} or \cite{BCR}.  Since 
$\R \proj ^n$ can be embedded in $\R^N$ by a biregular rational map (\cite{akbulutking}, 
\cite{BCR} (3.4.4)), this category also contains algebraic subsets of $\R \proj ^n$.  

  A \emph{Nash constructible function} on 
$\R \proj ^n$ is an integer-valued function $\varphi :\R \proj ^n \to \Z$ such that there exist a finite family of  regular rational mappings  $f_i:Z_i\to \R \proj ^n$ defined on projective real algebraic sets $Z_i$, connected components $Z'_i$ of $Z_i$,
and integers $m_i$,  such that for all $x\in \R \proj ^n$, 
\begin{equation} \label{Nash}
\varphi (x) = \sum_i m_i \chi ( 
f_{i}\inv (x)\cap Z'_i),
\end{equation}
where $\chi$ is the Euler characteristic.
Nash constructible functions were introduced in
\cite{mccroryparusinski}.  
  Nash constructible functions on $ \R \proj ^n$ form a ring.

  \begin{example}\label{ASsets}  \hfil
 \begin{enumerate}
  \item
 If $Y\subset \R \proj ^n$ is Zariski constructible (a finite set-theoretic combination of algebraic subsets),  then its characteristic function $\1_Y$ is Nash 
 constructible.  
  \item
   A subset $S\subset \R \proj ^n$   is called \emph{arc-symmetric} if 
every real analytic arc $\gamma : (a,b)\to  \R \proj ^n$ either meets  $S$ at isolated points  or  is entirely
included in $S$.   
Arc-symmetric sets were first studied  by  K. Kurdyka in \cite 
{kurdyka1}. 
As shown in \cite{mccroryparusinski}, a semialgebraic set $S\subset \R \proj ^n$ 
is arc-symmetric if and only if it is closed in $\R \proj ^n$ and $\1_S$ is Nash constructible. 
By the existence of arc-symmetric closure (\emph{cf.} \cite 
{kurdyka1}, \cite{aussois}), for a set $S\subset \R \proj ^n$ the function $\1_S$ is Nash constructible and only if 
$S$ is   a finite set-theoretic combination of semialgebraic arc-symmetric subsets of  $\R \proj ^n$.   If $\1_S$ is Nash constructible we say that $S$ is an \emph{$\AS $ set}.
\item 
  A connected component of a compact 
algebraic subset of $\R \proj ^n$ is  arc-symmetric.  A compact real analytic and semialgebraic subset 
of $\R \proj ^n$ is  arc-symmetric.   
\item
Every Nash constructible function on $\R \proj ^n$ is in particular \emph{constructible} (constant on strata of a 
finite semialgebraic stratification of $\R \proj ^n$).  Not all constructible functions are Nash constructible.  
By \cite{mccroryparusinski},  every  constructible function
 $\varphi : \R \proj ^n  \to 2^{n}\Z$ is Nash constructible.  
\end{enumerate}
 \end{example}

Nash constructible functions form the smallest family of constructible functions that contains 
characteristic functions of connected components of compact real algebraic sets, and that is stable under
 the natural 
 operations
inherited from sheaf theory:  pullback by regular rational morphisms, 
pushforward  by proper regular rational morphisms, restriction to Zariski open sets, and duality; see 
\cite{mccroryparusinski}.  In terms of the \emph{pushforward} (fiberwise integration with respect to the 
Euler characteristic)  
the formula  \eqref{Nash} can be expressed 
as $ \varphi =  \sum_i m_i \, f_{i\, *} \1_{Z'_i}$.  
Duality is closely related
to the \emph{link operator}, an important tool for studying the topological properties of real algebraic sets.   
   For more on Nash constructible function see \cite{bonnard} and \cite{aussois}.  
 
If $S\subset \R \proj ^n$ is an $\AS$ set (\emph{i.e.} $\1_S$ is Nash constructible), we say that
 a function on $S$ is \emph{Nash constructible}
if it is the restriction of a Nash constructible function on $ \R \proj ^n$.  In particular, this defines Nash constructible functions on affine real algebraic sets. (In the non-compact case  this definition is more restrictive than 
 that of \cite{mccroryparusinski}.)


\smallskip
\subsection{Nash constructible functions on real algebraic varieties}

Let  $X$ be a real algebraic variety  and let $\XR$ denote the set of real points on $X$.   
We call a function $\varphi : \XR  \to \Z$
 \emph{Nash constructible} if its restriction to every affine chart is Nash constructible.  The following lemma shows that
 this extends our definition of Nash constructible functions on affine real algebraic sets.  

\begin{lem} 
If $X_1$ and $X_2$ are two projective compactifications of the affine real algebraic variety $U$, then 
$\varphi : \UR \to \Z$ is the restriction of a Nash 
constructible function on $\XR_1$ if and only if $\varphi$ is the restriction of a Nash constructible function on 
$\XR_2$. 
\end{lem}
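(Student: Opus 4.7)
The plan is to reduce to the case where one compactification dominates the other, and then use the fact that Nash constructible functions are preserved by pullback under regular rational morphisms and by pushforward under proper regular rational morphisms (as recalled in the paragraph following Example \ref{ASsets}).

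First I would produce a third projective compactification $X$ of $U$ that dominates both $X_1$ and $X_2$: take the Zariski closure in $X_1 \times X_2$ of the image of the diagonal embedding $U \hookrightarrow X_1 \times X_2$ given by the two inclusions, exactly as in the proof of Proposition \ref{Gcomp}. Then the projections $f_i : X \to X_i$ are proper regular morphisms of projective real algebraic varieties that restrict to the identity on $U$. By symmetry it therefore suffices to prove the following statement: if $f : X \to X_1$ is a proper regular rational morphism of projective real algebraic varieties that is the identity on $U \subset X \cap X_1$, then a function $\varphi : \UR \to \Z$ is the restriction of a Nash constructible function on $\XR_1$ if and only if it is the restriction of a Nash constructible function on $\XR$.

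For the ``only if'' direction, suppose $\tilde\varphi$ is a Nash constructible function on $\XR_1$ extending $\varphi$. Then $f^*\tilde\varphi = \tilde\varphi \circ f$ is Nash constructible on $\XR$, because pullback by a regular rational morphism preserves Nash constructibility. Since $f$ is the identity on $\UR$, the restriction $(f^*\tilde\varphi)|_{\UR}$ equals $\varphi$. For the ``if'' direction, suppose $\psi$ is a Nash constructible function on $\XR$ extending $\varphi$. Then $f_*\psi$ is Nash constructible on $\XR_1$, because $f$ is proper and pushforward by a proper regular rational morphism preserves Nash constructibility. For $x \in \UR$ the fiber $f^{-1}(x)$ consists of the single point $x$ (since $f|_{\UR} = \id$), so $(f_*\psi)(x) = \chi(f^{-1}(x))\,\psi(x) = \psi(x) = \varphi(x)$, and $f_*\psi$ is the desired extension.

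I do not expect any real obstacle: the argument is essentially formal once one has (a) the domination trick from Proposition \ref{Gcomp} and (b) the closure of Nash constructible functions under proper pushforward and under pullback by regular rational morphisms. The only point that needs a little care is checking that $f_*\psi$ restricts correctly on $\UR$, which uses that $f$ is a scheme-theoretic isomorphism over $U$ and hence has singleton fibres over each real point of $\UR$.
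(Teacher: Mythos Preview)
Your proof is correct and follows essentially the same approach as the paper: reduce via the domination trick of Proposition \ref{Gcomp} to the case where one compactification maps onto the other by a regular morphism that is the identity on $U$, and then use that Nash constructible functions are stable under pullback and proper pushforward. You spell out a few details (the construction of the dominating $X$ and the fibre computation for $f_*\psi$) that the paper leaves implicit, but the argument is the same.
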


\begin{proof}
We may suppose that there is a regular projective morphism $f:X_1 \to X_2$ that is an isomorphism on $U$;
\emph{cf.}\ the proof of Proposition \ref{Gcomp}. 
Then the statement follows from the following two properties of Nash constructible functions.  
If  $\varphi_2: \XR_2\to \Z$ is  Nash constuctible  then so is its pullback 
$f^* \varphi_2 = \varphi_2 \circ f : \XR_1 \to \Z$.  
If  $\varphi_1: \XR_1\to \Z$ is  Nash constuctible  then so is its pushforward 
$f_* \varphi_1  : \XR_2 \to \Z$.  
\end{proof}

 \begin{prop}\label{Nashadditivity}
Let $X$ be a real algebraic variety and let  $Y\subset X$ be a closed 
subvariety.  Let $U=X\setminus Y$.  Then $\varphi :\XR \to \Z$ is Nash constructible if and only if  
the restrictions of $\varphi$ to $\YR$ and $\UR$ are Nash constructible.
\end{prop}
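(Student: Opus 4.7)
My plan is to prove both directions after reducing to the case that $X$ is affine. Nash constructibility on a variety is defined affine-locally, so I may cover $X$ by affine opens, on each of which $Y$ restricts to a closed affine subvariety and $U$ to its open complement; this reduces the proposition to the affine case. I then fix a projective compactification $\bar X$, let $\bar Y$ denote the Zariski closure of $Y$ in $\bar X$ (so $\bar Y\cap X=Y$ since $Y$ is closed in $X$), and put $D=\bar X\setminus X$. The characteristic function $\1_U=\1_{\bar X}-\1_D-\1_{\bar Y}+\1_{\bar Y\cap D}$ is then Nash constructible on $\bar X$ as an integer combination of characteristic functions of algebraic subsets (Example \ref{ASsets}(1)).

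The ``only if'' direction will be straightforward: if $\tilde\varphi$ is Nash constructible on $\bar X$ with $\tilde\varphi|_X=\varphi$, then $\tilde\varphi|_{\bar Y}$ is Nash constructible on $\bar Y$ (restriction of a Nash constructible function to a closed algebraic subset), whence $\varphi|_Y$ is Nash constructible on $Y$; for $\varphi|_U$, the function $\tilde\varphi$ itself certifies Nash constructibility on every affine open of $X$ contained in $U$, and such opens cover $U$.

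The ``if'' direction is the substantive content. Using the preceding lemma (independence of the projective compactification), I choose $\bar Y$ as the compactification certifying Nash constructibility of $\varphi|_Y$, obtaining $\tilde\varphi_Y$ Nash constructible on $\bar Y$. Its pushforward $\psi_Y$ along the closed embedding $\bar Y\hookrightarrow\bar X$ -- which is just extension by zero -- is Nash constructible because pushforward by a proper regular rational morphism preserves Nash constructibility. Similarly I produce $\tilde\varphi_U$ Nash constructible on $\bar X$ with $\tilde\varphi_U|_U=\varphi|_U$, and set $\psi_U:=\tilde\varphi_U\cdot\1_U$, which is Nash constructible by the ring structure. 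A case check on $Y$ (where $\psi_Y=\varphi|_Y$ and $\psi_U=0$) and on $U$ (where $\psi_Y=0$ since $\bar Y\cap U=\emptyset$, and $\psi_U=\varphi|_U$) gives $(\psi_Y+\psi_U)|_X=\varphi$, so $\varphi$ is Nash constructible on $X$.

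The main obstacle will be the subtlety that $U$ need not be affine even when $X$ is, so the single Nash constructible extension $\tilde\varphi_U$ on $\bar X$ has to be assembled from Nash constructible extensions on affine pieces. I handle this by covering $U$ by finitely many affine opens $U_j$ of $X$, partitioning $U$ into the locally closed pieces $V_j=U_j\setminus(U_1\cup\cdots\cup U_{j-1})$, and combining Nash constructible extensions $\tilde\varphi^j$ of $\varphi|_{U_j}$ to $\bar X$ via $\tilde\varphi_U=\sum_j\tilde\varphi^j\cdot\1_{V_j}$; each $\1_{V_j}$ is Nash constructible on $\bar X$ because $V_j$ is Zariski constructible, and by construction the sum restricts to $\varphi$ on $U$.
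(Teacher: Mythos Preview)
Your proof is correct and follows the same route as the paper: reduce to the affine case and argue directly. The paper's own proof is the single sentence ``It suffices to check the assertion for $X$ affine; this case is easy.'' You have supplied exactly the details the paper leaves implicit --- extending from $\bar Y$ by pushforward along the closed embedding, using the ring structure together with the Nash constructibility of $\1_U$, and patching over an affine cover of $U$ to handle the (genuine) subtlety that $U$ need not itself be affine --- so there is nothing to correct.
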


\begin{proof}
It suffices to check the assertion for $X$ affine; this case is easy.  
\end{proof}

\begin{thm}
Let $X$ be a complete real algebraic variety.  The function $\varphi:\XR \to \Z$ is Nash constructible if and only if 
 there exist a finite family of regular morphisms  $f_i:Z_i\to X$ defined on complete real algebraic 
 varieties $Z_i$, connected components $Z'_i$ of $\ZR_i$,
and integers $m_i$,  such that for all $x\in \XR$, 
\begin{align} \label{Nashsecond}
\varphi =  \sum_i m_i \, f_{i\, *} \1_{Z'_i}. 
\end{align}
\end{thm}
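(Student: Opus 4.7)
The plan is to prove both directions. For the $(\Leftarrow)$ direction, one checks that if $\varphi = \sum_i m_i f_{i\,*}\1_{Z'_i}$ with each $Z_i$ complete and $f_i:Z_i \to X$ regular, then the restriction of $\varphi$ to every affine chart $U \subset X$ is Nash constructible in the affine sense of Subsection 4.1. Given such a $U$, choose a projective compactification $\bar U$ of $U$ and, applying Nagata compactification followed by Chow's lemma to $f_i^{-1}(U)$, obtain a projective variety $\bar Z_i$ with a proper regular map to $\bar U$ extending $f_i|_{f_i^{-1}(U)}$. The closure of $Z'_i \cap f_i^{-1}(U)$ in $\bar Z_i$ is an $\AS$ subset of a projective variety, hence by arc-symmetric closure (Example \ref{ASsets}(2)) together with the ring structure of Nash constructible functions it is a finite $\Z$-combination of indicators of connected components of projective algebraic sets; pushing forward exhibits $\varphi|_U$ as the restriction to $U$ of a Nash constructible function on $\bar U$.

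The $(\Rightarrow)$ direction is the substantive one, and I would prove it by induction on $d = \dim X$. The case $d=0$ is immediate, since $\XR$ is then a finite set of points and each point is itself a complete variety. For the inductive step, apply Chow's lemma to obtain a proper birational morphism $\pi:\tilde X \to X$ with $\tilde X$ projective, together with a proper closed subvariety $Y \subsetneq X$ over which $\pi$ fails to be an isomorphism, so that $\pi$ restricts to an isomorphism $\pi^{-1}(X\setminus Y) \to X\setminus Y$. By Proposition \ref{Nashadditivity}, decompose $\varphi = \varphi_Y + \varphi_U$, where $\varphi_Y = \varphi\cdot \1_Y$ and $\varphi_U = \varphi\cdot \1_{X\setminus Y}$ are both Nash constructible on $X$. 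The inductive hypothesis, applied to the complete variety $Y$ of dimension strictly less than $d$, produces a presentation $\varphi_Y = \sum_k m_k h_{k\,*}\1_{W'_k}$ with $h_k:W_k \to Y$ regular and $W_k$ complete; postcomposition with the inclusion $Y \hookrightarrow X$ carries it to the desired form over $X$.

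For the remaining piece $\varphi_U$, pull back to $\tilde X$ to obtain $\tilde\varphi_U = \varphi_U \circ \pi$, which is Nash constructible on $\tilde X$ because the definition is stable under regular pullback. Since $\tilde X$ is projective, embedding $\tilde X \hookrightarrow \R\proj^n$ and invoking the definition on $\R\proj^n$ together with the ring structure of Nash constructible functions and arc-symmetric closure yields a presentation $\tilde\varphi_U = \sum_j m_j g_{j\,*}\1_{W'_j}$ with $g_j:W_j \to \tilde X$ regular, $W_j$ projective, and $W'_j$ a connected component of $\underline{W}_j$. Pushing forward along $\pi$ gives $\pi_*\tilde\varphi_U = \sum_j m_j (\pi\circ g_j)_* \1_{W'_j}$, with each $\pi \circ g_j:W_j \to X$ regular and $W_j$ complete. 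Finally, fiberwise Euler integration shows $\pi_*\tilde\varphi_U = \varphi_U$: over $X\setminus Y$ the fibers of $\pi$ are singletons, while over $Y$ the function $\tilde\varphi_U$ vanishes identically since $\varphi_U|_Y \equiv 0$. Adding the two pieces yields the required presentation of $\varphi$. The main obstacle lies in the interplay between the affine-chart definition on a general complete variety and the projective-formula definition: specifically, one must confirm that Nash constructibility as newly defined is stable under both pullback by arbitrary regular morphisms and proper pushforward of complete varieties, and that connected components of real points of complete varieties can be reorganized via arc-symmetric closure into the required form.
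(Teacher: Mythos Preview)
Your overall strategy for the forward direction---Chow's Lemma to dominate $X$ by a projective $\tilde X$, present the pullback there, push forward, and handle the discrepancy by induction on dimension---is exactly the paper's approach. Your splitting $\varphi=\varphi_Y+\varphi_U$ before pulling back is a minor reorganization: it buys you the clean identity $\pi_*\tilde\varphi_U=\varphi_U$ (since $\tilde\varphi_U$ vanishes on $\pi^{-1}(Y)$), whereas the paper pulls back the whole $\varphi$ and observes that $\pi_*\tilde\varphi$ agrees with $\varphi$ off $\YR$, leaving a Nash constructible remainder supported on $\YR$ to be treated by induction. Both arguments are equivalent.

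Where your write-up is thin is the projective base case of $(\Rightarrow)$. You say that embedding $\tilde X\hookrightarrow\R\proj^n$ and invoking the definition ``together with arc-symmetric closure'' yields a presentation with $W_j$ projective and $g_j$ a \emph{regular morphism of schemes}. But the affine definition of Nash constructible (formula~\eqref{Nash}) uses regular \emph{rational} maps $f:A\to\R\proj^n$ of real algebraic sets in the sense of \cite{BCR}, not morphisms of schemes, and the source $A$ is a real algebraic \emph{set}, not a priori the real locus of a projective scheme. The paper bridges this gap with a graph argument: writing $f=g/h$, the graph $\Gamma\subset\R\proj^n\times\R\proj^m$ is Zariski closed, hence $\Gamma=\ZR$ for a projective scheme $Z$, and $f_*\1_{A'}=\pi_*\1_{\Gamma'}$ for the second projection $\pi$, with $\Gamma'$ a connected component of $\ZR$. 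You should supply this step explicitly; ``arc-symmetric closure'' alone does not convert a regular rational map into a scheme morphism.

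For $(\Leftarrow)$ your chart-by-chart argument with Nagata and Chow's Lemma on the preimages is more elaborate than necessary. The paper instead observes that the statement is obvious when both $X$ and $Z_i$ are projective, and then dominates both by projective varieties via Chow's Lemma, reducing to that case by induction on dimension (using Proposition~\ref{Nashadditivity} to control what happens over the exceptional locus). This is shorter and avoids the somewhat delicate point in your argument of showing that the closure of $Z'_i\cap f_i^{-1}(U)$ in your auxiliary $\bar Z_i$ is a $\Z$-combination of indicators of connected components.
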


\begin{proof}  
 If $X$ is complete but not projective, then $X$ can be dominated by a birational regular morphism 
$\pi: \tilde X \to X$, with $\tilde X$ projective (Chow's Lemma).   Let $Y\subset X$, $\dim Y < \dim X$, be a closed 
subvariety such that $\pi$ induces an isomorphism   $\tilde X \setminus \pi\inv (Y) \to X \setminus Y$.  
Then, by Proposition 
\ref{Nashadditivity},  $\varphi :\XR \to \Z$ is Nash constructible if and only if $\pi^* \varphi$ and $\varphi$ 
restricted to $\YR$ are Nash constructible.  

Let $Z$ be a complete real algebraic variety and let  $f:Z\to X$ be a regular morphism.  
 Let $Z'$ be a connected component 
of $\ZR$.  We show that $\varphi =  f_*\1_{Z'}$ is Nash constructible.  This is obvious if both $X$ and $Z$ 
are projective.  If they are not, we may dominate both $X$ and $Z$ by projective varieties, using Chow's Lemma, 
and reduce to the projective case by induction on dimension.  

Let $\varphi:\XR \to \Z$ be Nash constructible.  Suppose first that $X$ is projective.  Then 
$\XR\subset \R \proj ^n$ is a real algebraic set.  Let $A\subset  \R \proj ^m$ be a real algebraic set and let 
$f:A\to \XR$ be a regular rational morphism $f=g/h$, where $h$ does not vanish on $A$, cf. 
\cite{akbulutking}.  Then the graph 
of $f$ is an algebraic subset $\Gamma\subset  \R \proj ^n\times  \R \proj ^m$ and the 
set of real points of a projective real variety $Z$.  Let $A'$ be a connected component of $A$, 
and $\Gamma'$ the 
graph of $f$ restricted to $A'$.  
Then $f_* \1_{A'} = \pi_* \1_{\Gamma'}$, where $\pi$ denotes the projection on the second factor.     

If $X$ is complete but not projective, we again dominate it by a birational regular morphism $\pi: \tilde X \to X$, 
with $\tilde X$ projective.  Let $\varphi:\XR \to \Z$ be Nash constructible.  Then 
$\tilde\varphi = \varphi \circ \pi : \tilde \XR \to \Z$ is Nash constructible.  Thus, by the case considered above, there are 
regular morphisms  $\tilde f_i: \tilde Z_i\to \tilde X$, and connected components $\tilde Z'_i$ such that 
\begin{align*}
\tilde \varphi (x) = \sum_i m_i \, \tilde f_{i\, *} \1_{\tilde Z'_i}.  
\end{align*}
Then $\pi_* \tilde \varphi = \sum_i m_i \, \tilde \pi_* f_{i\, *} \1_{\tilde Z'_i}$ and differs from $\varphi$ only on the 
set of real points of a variety of dimension smaller than $\dim X$.  We complete the argument 
by induction on dimension.    
\end{proof}

If $X$ is a real algebraic variety, we again
say that $S\subset \XR$ is an \emph{$\AS$ set} if $\1_S$ is Nash constructible, 
and  $\varphi:S\to \Z$ is \emph{Nash constructible} if 
the extension of $\varphi$ to $\XR$ by zero is a Nash constructible function on $\XR$.  

\begin{cor}
Let $X, Y$ be complete real algebraic varieties and let $S$ be an $\AS$ subset 
of $\XR$,  and $T$ an $\AS$ subset of $\YR$.    Let $\varphi:S\to \Z$ and $\psi: T \to \Z$ be Nash constructible.  
Let $f:S \to T$ be a map with $\AS $ graph  $\Gamma\subset \XR \times \YR$ and  
let $\pi_X : X\times Y \to X$ and  $\pi_Y : X\times Y\to Y$ denote the standard projections.  Then 
\begin{eqnarray}\label{Ngraphs1}
 & & f_* (\varphi) = (\pi_Y)_*( \1_\Gamma\cdot  \pi_X^* \varphi) \\ \label{Ngraphs2}
& & f^* (\psi) = (\pi_X)_*( \1_\Gamma \cdot \pi_Y^* \psi) 
\end{eqnarray}
are Nash constructible.  
\end{cor}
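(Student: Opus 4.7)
The plan is to assemble the two formulas out of operations already known to preserve Nash constructibility (pullback by regular morphisms, product, and pushforward by proper regular morphisms), and then verify pointwise — via Euler integration over fibers — that the right-hand sides really compute $f_*(\varphi)$ and $f^*(\psi)$.

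First, since $X$ and $Y$ are complete, so is the product $X\times Y$, and the projections $\pi_X$ and $\pi_Y$ are proper regular morphisms. I would start by listing the building blocks. The graph $\Gamma\subset \XR\times\YR$ is $\AS$ by hypothesis, so $\1_\Gamma$ is Nash constructible on $X\times Y$. The extensions of $\varphi$ and $\psi$ by zero to $\XR$ and $\YR$ respectively are Nash constructible by definition, and pulling back by the regular morphisms $\pi_X$, $\pi_Y$ preserves Nash constructibility (a property quoted just after Example 3.3 of this section, and more carefully in the earlier lemma via restriction/pushforward on affine charts). Products of Nash constructible functions are Nash constructible because they form a ring. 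Finally, $(\pi_Y)_*$ and $(\pi_X)_*$ preserve Nash constructibility because $\pi_X$ and $\pi_Y$ are proper regular morphisms between complete varieties — this is the pushforward stability property invoked repeatedly above (e.g.\ in the proof that $f_*\1_{Z'}$ is Nash constructible). Thus the right-hand sides of \eqref{Ngraphs1} and \eqref{Ngraphs2} are Nash constructible on $Y$ and $X$, respectively.

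It then remains to check that these Nash constructible functions actually coincide with $f_*(\varphi)$ and $f^*(\psi)$, which is a pointwise Euler-integration computation. For $y\in\YR$, the fiber $\pi_Y^{-1}(y)\cap \Gamma$ is $f^{-1}(y)\times\{y\}$, so
\[
(\pi_Y)_*\bigl(\1_\Gamma\cdot \pi_X^*\varphi\bigr)(y)
= \int_{\pi_Y^{-1}(y)} \1_\Gamma\cdot (\varphi\circ\pi_X)\, d\chi
= \int_{f^{-1}(y)}\varphi\, d\chi
= f_*(\varphi)(y),
\]
the last equality being the definition of pushforward with respect to the Euler characteristic. For $x\in \XR$, the fiber $\pi_X^{-1}(x)\cap\Gamma$ is the single point $\{(x,f(x))\}$ if $x\in S$ and empty otherwise, so
\[
(\pi_X)_*\bigl(\1_\Gamma\cdot \pi_Y^*\psi\bigr)(x)
= \1_S(x)\cdot \psi(f(x))
= f^*(\psi)(x),
\]
once $f^*(\psi)$ is understood as extended by zero off $S$, matching the convention adopted just before the corollary.

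The only mildly delicate point — and hence the step I would flag as the main obstacle — is to justify that pullback and pushforward along $\pi_X$, $\pi_Y$ really do preserve Nash constructibility when applied to functions supported on $\AS$ subsets that are not necessarily Zariski closed. This reduces, via Proposition \ref{Nashadditivity} and the extension-by-zero convention, to the previously established stability statements for Nash constructible functions on complete varieties; once that is in hand, the corollary follows from the pointwise fiber computation above.
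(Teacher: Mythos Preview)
Your proof is correct and is precisely the argument the paper leaves implicit: the corollary is stated without proof because the right-hand sides of \eqref{Ngraphs1} and \eqref{Ngraphs2} are visibly built from pullback along the regular projections, multiplication by $\1_\Gamma$, and pushforward along the proper regular projections, all of which preserve Nash constructibility by the discussion preceding the corollary. Your additional pointwise verification that these formulas really compute $f_*(\varphi)$ and $f^*(\psi)$ in the Euler-integration sense is a helpful sanity check, though the paper appears to take the formulas themselves as the \emph{definition} of $f_*$ and $f^*$ for maps with $\AS$ graph.
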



\medskip
\subsection{Definition of the Nash constructible filtration.}\label{defNashfiltration}
Denote by   $\mathcal X_{\AS}$ 
the category  of locally compact $\AS$ subsets of real algebraic varieties  as objects  and continuous proper 
maps with $\AS$ graphs as morphisms. 

 Let $T\in \mathcal X_{\AS}$. We say
that $\varphi: T\to \Z$ is \emph{generically Nash constructible  on 
$T$ in dimension $k$} if $\varphi$ coincides with a Nash constructible function
everywhere on $T$
 except on a semialgebraic subset of $T$ of dimension $<k$.  We say
 that  $\varphi$ is \emph{generically Nash constructible  on
$T$} if $\varphi$ is Nash constructible in dimension $d=\dim T$.

Let $c\in C_k (T)$, and let $-k\le p\leq 0$.  We say  that  $c$ is \emph{$p$-Nash constructible}, 
and write $c\in \Na_p C_k (T)$, if there exists  $\varphi_{c,p} : T \to 2^{k+p}\Z$, 
generically Nash constructible  in dimension $k$,  such that 
\begin{equation}\label{NFdef}
 c= \{ x \in  T\ ;\ \varphi_{c,p}  (x) \notin   2^{k+p+1}\Z \}, 
\end{equation}  
up to a set of dimension less than $k$. The choice of $\varphi _{c,p}$ is not unique.  
Let $Z$ denote the Zariski closure of $\supp c$.  By multiplying $\varphi _{c,p}$ by $\1_Z$, we may always assume 
that $\supp \varphi \subset Z$ and hence, in particular, that $\dim \supp \varphi _{c,p}\le k$.  

We say that $c\in C_k (T)$ is \emph{pure} if $c\in \Na_{-k} C_k (T)$. 
By \cite{aussois}, Theorem 3.9, and the existence of arc-symmetric closure, cf. \cite 
{kurdyka1}, \cite{aussois}, $c\in  C_k (T)$ is pure if and only if $\supp c$ coincides with an $\AS$ set
 (up to a set of dimension smaller than $k$).  For $T$ compact this means that $c$ is pure if and only if $c$ 
can be represented by an arc-symmetric set.  
By \cite{mccroryparusinski}, if $\dim T=k$ then  every semialgebraically constructible function
 $\varphi : T \to 2^{k}\Z$ is Nash constructible.  
 Hence $\Na_0 C_k (T) =  C_k( T)$.

The boundary operator preserves the Nash constructible filtration: 
$$
\partial \Na_p C_k(T)\subset \Na_p C_{k-1}(T).
$$
Indeed, if $c\in C_k (T)$ is given by \eqref{NFdef} and $\dim \supp \varphi _{c,p}\le k$., then 
\begin{equation} \label{NFpartial}
\partial c= \{ x \in  Z\ ;\ \varphi_{\partial c,p} (x) \notin   2^{k+p}\Z \} , 
\end{equation}  
where $\varphi_{\partial c,p}$ equals  $\frac 1 2 \Lambda \varphi_{c,p}$  for $k$ odd and 
  $\frac 1 2 \Omega \varphi_{c,p}$ for $k$ even, \emph{cf.} \cite{mccroryparusinski}.  
  A geometric interpretation of this formula is as follows ; \emph{cf.}\ \cite{bonnard}.  Let $Z$ be the Zariski closure of $\supp c$, so $ \dim Z =k$ if $c\ne 0$.  Let $W$ be an  algebraic subset of $Z$ such that $\dim W<k$ and $\varphi_{c,p}$ is 
locally constant on $Z\setminus W$.  At a generic point $x$ of $W$, we define 
$\partial_W \varphi_{c,p}(x) $ as the average of the values of $\varphi_{c,p}$ on the  local connected components of $Z\setminus W$ at $x$.  It can be shown that $\partial_W \varphi_{c,p}(x) $ is generically Nash constructible in dimension $k-1$. (For $k$ odd it equals $(\frac 1 2 \Lambda \varphi_{c,p})|_W$ and for $k$ even it equals $(\frac 1 2 \Omega \varphi_{c,p})|_W$; \emph{cf.} \cite{mccroryparusinski}.)  

We say that a square in  $\mathcal X_{\AS}$
 \begin{align}\label{ASacyclic}
 \minCDarrowwidth 1pt\begin{CD}
\tilde S @> >> \tilde T \\
 @V VV @VV\pi V \\
S @>i >> T
\end{CD}\end{align}
is acyclic if $i$ is a closed inclusion, $\tilde S =\pi\inv (Y)$ and the restriction of $\pi$ is a homeomorphism 
$\tilde T \setminus \tilde S \to T\setminus S$.   

\begin{thm} \label{Nashproperties} 
The  functor  $\Na C_* : \mathcal X_{\AS} \to \Cat$,  defined on 
the category $\mathcal X_{\AS}$ of locally compact $\AS$ sets and continuous proper maps 
with $\AS$ graphs, satisfies:
\begin{enumerate}
\item 
For an acyclic square \eqref{ASacyclic}  
the sequences 
\begin{eqnarray*}
& & 0\to \Na_p  C_{k} (\tilde S) \to \Na_p C_k (S) \oplus \Na_p  C_{k} (\tilde T) \to 
\Na_p C_{k} (T) \to 0  \\
& & 0\to \frac { \Na_p C_k(\tilde S) } {\Na_{p-1}  C_{k} (\tilde S)} \to 
\frac {\Na_p C_k(S)} {\Na_{p-1} C_k (S)} \oplus  \frac {\Na_p C_k(\tilde T)} {\Na _{p-1}  C_{k} (\tilde T)} \to 
\frac {\Na_p C_k(T) } {\Na_{p-1} C_{k} (T)} \to 0  
\end{eqnarray*}
are exact.
\item 
 For a closed inclusion $S\subset T$,  the restriction to $U=T\setminus S$ induces a  morphism of filtered complexes  
 $\Na C_*(T) \to \Na C_* (U)$, and the sequences 
\begin{eqnarray*} \label {Nexact1*}
& & 0\to \Na_p  C_{k} (S) \to \Na_p  C_{k} (T) \to 
\Na_p C_{k} (U) \to 0 \\ 
\label {Nexact2*}
& & 0\to \frac {\Na_p C_k(S)} {\Na _{p-1}  C_{k} (S)} \to \frac {\Na _p C_k(T)} {\Na_{p-1}  C_{k} (T)} \to 
\frac {\Na_p C_k(U)} {\Na_{p-1}  C_{k} (U)} \to 0  
\end{eqnarray*}
are exact.  
\end{enumerate}   
\end{thm}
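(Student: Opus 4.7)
The plan is to verify both assertions at the chain level using Nash constructible function representatives. The unfiltered short exact sequences are the standard localization and Mayer--Vietoris sequences for semialgebraic chains with closed supports; what needs to be tracked is that the maps preserve the Nash constructible filtration $\Na_*$ and that exactness persists at each filtration level. The principal tool is Proposition \ref{Nashadditivity} (and its evident extension to $\AS$ sets), together with the preceding corollary on formulas \eqref{Ngraphs1}--\eqref{Ngraphs2} giving closure of Nash constructible functions under pullback and proper pushforward along $\AS$ morphisms.

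For part (2), start with the short exact sequence
\[
0\to C_k(S)\to C_k(T)\to C_k(U)\to 0.
\]
The inclusion $i\colon S\hookrightarrow T$ is a proper $\AS$ morphism, so its pushforward preserves $\Na_p$. Conversely, given $c\in\Na_p C_k(T)$ with Nash constructible representative $\varphi_{c,p}$, the restrictions $\varphi_{c,p}|_S$ and $\varphi_{c,p}|_U$ are Nash constructible by Proposition \ref{Nashadditivity}, and represent the restrictions of $c$; in particular the restriction map sends $\Na_p$ into $\Na_p$. Surjectivity onto $\Na_p C_k(U)$ is dual: a Nash constructible function on $U$ (i.e.\ one whose extension by zero to the ambient variety is Nash constructible) extends by zero to a Nash constructible function on $T$, giving the required lift. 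Exactness in the middle reduces to the statement that if $c\in\Na_p C_k(T)$ is supported on $S$ then $c\in\Na_p C_k(S)$, which holds because the representing function then restricts to a Nash constructible representative on $S$.

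For part (1), the underlying chain sequence is the Mayer--Vietoris sequence
\[
0\to C_k(\tilde S)\xrightarrow{(\tilde\pi_*,\, j_*)} C_k(S)\oplus C_k(\tilde T)\xrightarrow{i_*+\pi_*} C_k(T)\to 0,
\]
with $j\colon\tilde S\hookrightarrow\tilde T$ the closed inclusion and $\tilde\pi=\pi|_{\tilde S}$; exactness uses that $\pi$ restricts to a homeomorphism $\tilde T\setminus\tilde S\to T\setminus S$. All four structure maps of the square are proper $\AS$ morphisms, so pushforward along each preserves $\Na_p$. The key step is surjectivity at $\Na_p C_k(T)$: given $c\in\Na_p C_k(T)$ with representative $\varphi$, form $\pi^*\varphi=\varphi\circ\pi$, which is Nash constructible on $\tilde T$ because $\pi$ has $\AS$ graph. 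The associated chain $\tilde c\in\Na_p C_k(\tilde T)$ agrees with the lift of $c$ off $\tilde S$, so $b:=c+\pi_*\tilde c$ is supported on $S$ and lies in $\Na_p C_k(T)$; by part (2) it then lies in $\Na_p C_k(S)$, and $(b,\tilde c)$ is the desired preimage. Injectivity and exactness in the middle are tracked analogously by restricting representing functions.

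The graded-piece sequences follow in each case by applying the snake lemma to the morphism of short exact sequences from level $\Na_{p-1}$ into level $\Na_p$. The anticipated main obstacle is ensuring that every manipulation of a Nash constructible representative, passing between $T$, $S$, $U$, $\tilde T$ and $\tilde S$, remains within the class of Nash constructible functions; this reduces to stringing together the pullback, proper pushforward and restriction properties already established, with care to handle the $\AS$ (as opposed to strictly algebraic) nature of the sets involved.
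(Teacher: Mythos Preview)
Your proposal is correct and follows essentially the same approach as the paper: establish functoriality via pushforward of Nash constructible representatives, show restriction and closure preserve $\Na_p$ (the paper writes the splitting of the first sequence of (2) explicitly as $c\mapsto \bar c$ with representing function $(\1_T-\1_S)\varphi$, which is your ``extension by zero''), and then obtain the remaining sequences by a diagram chase. In fact you spell out the surjectivity step in (1) via $\pi^*\varphi$ more explicitly than the paper, which simply refers to ``standard arguments'' and the analogous proof of Theorem~\ref{geomproperties}.
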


\begin{proof}
We first show that $\Na C_*$ is a functor; that is, for a proper morphism $f:T\to S$, 
$f_* \Na_p C_k (T) \subset \Na_p C_k(S)$. 
Let $c\in \Na_p C_k (T)$ and let $\varphi = \varphi_{c,p} $ be a Nash constructible function on $T$ 
satisfying \eqref{NFdef}     
(up to a set of dimension $<k$). Then 
$$
f_* c = \{ y\in S\ ;\ f_* (\psi) (y) \notin 2^{k+p+1} \Z \}; 
$$
 that is,  $ \varphi_{f_*c,p} = f_*  \varphi_{c,p} $.

For a closed inclusion  $S\subset T$, the restriction to $U =T\setminus S$ of a Nash constructible function on 
$T$ is Nash 
constructible.  Therefore the restriction defines a morphism  $\Na C_*(T) \to \Na C_* (U)$.  
The exactness of the first sequence of (2) can be verified easily by direct computation.   We note, moreover, that for fixed $k$ the morphism 
$$\Na_* C_k (T) \to \Na_* C_k (U)$$
splits (the splitting does not commute with the boundary), by assigning to $c\in \Na_p C_k(U)$ its closure 
$\bar c\in C_k(T)$.   Let $\varphi :T \to 2^{k+p}\Z$ be a Nash constructible function 
such that $\varphi|_{T\setminus S}= \varphi_{c,p}$.  
Then $\bar c= \{ x \in  T\ ;\ (\1_T-\1 _S) \varphi (x) \notin   2^{k+p+1}\Z \} $ up to a set of dimension $<k$.  

The exactness of the second sequence of (2) and the sequences of (1) now follow by standard arguments.  
(See the proof of Theorem \ref{geomproperties}.)
\end{proof}


\subsection{The Nash constructible filtration for Nash manifolds} \label{Nashmanifolds}
A \emph{Nash function} on an open semialgebraic subset $U$ of $\R^N$ is a real analytic semialgebraic function.  Nash morphisms and Nash manifolds play an important role in real algebraic geometry.  
In particular a connected component of  compact  nonsingular real algebraic subset of $\R^n$ is a Nash submanifold of $\R^N$
 in the sense of \cite{BCR} (2.9.9).  Since 
$\R \proj ^n$ can be embedded in $\R^N$ by a rational diffeomorphism (\cite{akbulutking}, 
\cite{BCR} (3.4.2)) the connected components of nonsingular projective real algebraic varieties can 
be considered as Nash submanifolds of affine space. 
 By the Nash Theorem (\emph{cf.}\ \cite {BCR} 14.1.8),   every compact $C^\infty$ manifold is $C^\infty$-diffeomorphic to a 
 Nash submanifold of an affine space, and moreover such a model is unique up to 
 Nash diffeomorphism (\cite{BCR} Corollary 8.9.7).   In what follows by a \emph{Nash manifold} we mean a compact Nash submanifold of an affine space.

Compact  Nash manifolds and the  
 graphs of Nash morphisms on them are  $\AS$ sets. 
 If $N$ is a Nash manifold, the Nash constructible filtration is contained in the canonical filtration,
\begin{equation}\label{arcfiltr}
\Na_p C_k (N) \subset F^{can}_p C_k (N),
\end{equation} 
since $\partial_k(\Na_{-k}C_k(N))=0$.
Thus on the category of Nash manifolds and Nash maps have a morphism of functors
$$
\tau: \Na C_*  \to F^{can} C_*.
$$

\begin{thm}\label{isoNash}
For every Nash manifold  $N$,   
$$
\tau(N): \Na C_*(N)  \to F^{can} C_*(N)
$$ 
is a filtered quasi-isomorphism.  
\end{thm}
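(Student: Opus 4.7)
The strategy is to argue by induction on $n = \dim N$, using the Nash analog of Mikhalkin's theorem to link every connected Nash manifold of dimension $n$ to a fixed algebraic base case, together with blowup invariance derived from the acyclicity of both filtrations. The case $n = 0$ is trivial: $N$ is finite and $C_0 = \Na_0 C_0 = F^{can}_0 C_0$, so there is nothing to check.

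For the inductive step, assume $\tau(N')$ is a filtered quasi-isomorphism for every Nash manifold $N'$ of dimension $<n$. The principal step is \emph{invariance under Nash blowup along a smooth center}. Let $\pi : \tilde N \to N$ be the blowup of a compact Nash manifold $N$ along a compact Nash submanifold $C$ of positive codimension, and let $E = \pi^{-1}(C)$. Then
\begin{equation*}
\minCDarrowwidth 1pt\begin{CD}
E @>>> \tilde N \\
@VVV @VV\pi V \\
C @>>> N
\end{CD}
\end{equation*}
is an acyclic square in $\mathcal X_{\AS}$ in the sense of (\ref{ASacyclic}), so by Theorem \ref{Nashproperties}(1) the simple filtered complex of the associated $\Na C_*$-diagram is acyclic. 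The Mayer-Vietoris short exact sequence (\ref{short}) for smooth blowups of smooth compact manifolds holds in each degree, so by exactly the argument of Remark \ref{Fcan} the simple filtered complex of the canonical filtrations is also acyclic. The natural transformation $\tau$ gives a morphism between these two acyclic cubical diagrams, which on $E^1$ of the associated spectral sequences becomes a commutative ladder of long exact sequences of Mayer-Vietoris type. Since $\dim C < n$ and $\dim E = n-1 < n$, the inductive hypothesis makes the $\tau$-maps at $C$ and $E$ isomorphisms, so the five-lemma shows that $\tau(N)$ is an $E^1$-isomorphism if and only if $\tau(\tilde N)$ is.

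For the base case in dimension $n$, I would verify the statement directly for $M = \R\proj^n$. The Borel-Moore groups $H_k(\R\proj^n; \Z_2) = \Z_2$ are generated by the fundamental classes of the linear subspaces $\R\proj^k \subset \R\proj^n$, each of which is a compact real algebraic set, hence arc-symmetric, so these classes are represented by pure chains lying in $\Na_{-k}C_k(\R\proj^n)$. A chain-level calculation, using the characterization of $\AS$ sets and Nash constructible functions recalled in Example \ref{ASsets}, then shows that the Nash graded quotient $\Na_p C_*/\Na_{p-1} C_*$ of $\R\proj^n$ has homology $\Z_2$ concentrated in degree $-p$ for $0 \le -p \le n$ and vanishes otherwise, matching the $E^1$ description (\ref{canonical}) of the canonical filtration.

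To conclude, the Nash adaptation of Mikhalkin's theorem (stated in the abstract and built in this section) connects every compact connected Nash manifold of dimension $n$ to $\R\proj^n$ by a finite chain of Nash blowups and blowdowns along smooth centers. Blowup invariance then propagates the filtered quasi-isomorphism property from $\R\proj^n$ to every connected Nash manifold of dimension $n$, and additivity of $\Na C_*$ and $F^{can}C_*$ under disjoint unions extends the result to disconnected Nash manifolds. The main obstacle is the chain-level base case on $\R\proj^n$: verifying that off the canonical diagonal the Nash graded pieces are acyclic requires a careful analysis of arc-symmetric representatives of boundaries, for which arc-symmetric closure (cf. Example \ref{ASsets}(2)) is the essential technical tool.
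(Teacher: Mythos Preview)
Your overall architecture matches the paper exactly: induction on dimension, blowup invariance via the acyclic square and the five-lemma (the paper's Lemma \ref{Nashblowup}), and the Nash version of Mikhalkin's theorem (the paper's Proposition \ref{NashMikhalkin}) to reduce to a single connected model in each dimension. The only substantive difference is the base case.

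The paper takes $S^n$ rather than $\R\proj^n$, and this choice is what makes the base case genuinely elementary. After removing a point not in the Zariski closure of $\supp c$, one is in $\R^n$ and can cone explicitly: given a cycle $c\in\Na_pC_k(S^n)$ with $k<n$, described by a Nash constructible function $\varphi_{c,p}$, the paper sets $\Phi(x,t)=2\varphi_{c,p}(x)$ for $t\in[0,1]$ and $0$ otherwise, and pushes $c\times[0,1]$ forward along the scaling map $f(x,t)=tx$. This produces a null-homotopy inside $\Na_pC_*(S^n)$ in two lines, so $H_k(\Na_pC_*(S^n))$ is computed immediately and seen to match $H_k(F^{can}_pC_*(S^n))$.

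By contrast, your $\R\proj^n$ base case is only asserted. Exhibiting the linear subspaces as arc-symmetric cycles shows surjectivity of $\tau_*$ on the canonical diagonal, but the acyclicity of the Nash graded pieces off the diagonal --- which you correctly flag as ``the main obstacle'' --- is not addressed, and you cannot invoke purity of the weight complex for nonsingular projective varieties here without circularity (that statement for $\Na C_*$ is a consequence of the theorem you are proving, via Corollary \ref{iso}). There is no obvious contraction on $\R\proj^n$ playing the role of the cone on $S^n\setminus\{p\}$. So while your proposal is not wrong in principle, it leaves the one genuinely nontrivial computation undone; switching the anchor manifold to $S^n$ closes the gap cleanly.
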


\begin{proof} 
We show that for all $p$ and $k$, $\tau(N)$ induces
 an isomorphism 
\begin{equation}
\tau_*: H_k ( \Na_p C_* (N))  \cong H_k (F^{can}_p C_*(N)).
\end{equation}
Then, by the long exact homology sequences of $(\Na_p C_* (N),\Na_{p-1} C_* (N))$ and 
$ (F^{can}_p C_*(N), $ $ F^{can}_{p-1} C_*(N))$, 

$$
\tau_*: H_k \left(\frac { \Na_p C_* (N)} {\Na_{p-1} C_* (N) }\right) \to H_k \left(\frac {F^{can}_p C_*(N)} { F^{can}_{p-1} C_*(N)}\right )
$$
is an isomorphism, which shows the claim of the theorem.  

We proceed by  induction on the dimension of $N$.  
  We call 
a Nash morphism $\pi :\tilde N \to N$ a \emph{Nash multi-blowup} if $\pi$ is a composition 
of blowups along nowhere dense Nash submanifolds.  

\begin{prop}\label{NashMikhalkin}
Let $N, N'$ be compact connected Nash manifolds of the same dimension.  Then there exist  multi-blowups 
$\pi :\tilde N \to N$, $\sigma :\tilde N' \to N'$ such that $\tilde N$ and $\tilde N'$ are Nash diffeomorphic.  
\end{prop}

\begin{proof}

By a theorem of Mikhalkin \cite {mikhalkin1},   \cite {mikhalkin2} Proposition 2.6, 
 any two connected closed $C^\infty$ 
manifolds of the same dimension can be connected by a sequence of 
$C^\infty$ blowups and and then blowdowns with smooth centers.  We show that this $C^\infty$ statement implies an analgous statement in the Nash category.  

Let $M$ be a closed $C^\infty$ manifold.  By the Nash-Tognoli Theorem there is a nonsingular real algebraic set 
 $X$, \emph{a fortiori} a Nash manifold, that is $C^\infty$-diffeomorphic 
to $M$.  Moreover, by approximation by Nash mappings, any two Nash models of $M$ are Nash diffeomorphic; see \cite{BCR} Corollary 8.9.7.   Thus in order to show Proposition 
\ref{NashMikhalkin} we need only the following lemma.

\begin{lem}
Let  $C\subset M$ be a $C^\infty$ submanifold of a closed $C^\infty$ manifold $M$.  Suppose that $M$ is $C^\infty$-diffeomorphic to a Nash 
manifold $N$.  Then there exists a Nash submanifold $D\subset N$ such that the blowups
$Bl(M,C)$ of $M$ along $C$   and 
$Bl(N,D)$ of $N$ along $D$ are $C^\infty$-diffeomorphic.  
\end{lem}

\noindent \emph{Proof.}
By the relative version of 
Nash-Tognoli Theorem proved by Akbulut-King and Benedetti-Tognoli (see for instance \cite{BCR} Remark 14.1.15),  
there is a nonsingular real algebraic set $X$ and a $C^\infty$ diffeomorphism $\varphi : M\to X$ such 
that $Y=\varphi (C)$ is a nonsingular algebraic set.   Then the blowups  $Bl(M,C)$ of $M$ along $C$   and 
$Bl(X,Y)$ of $X$ along $Y$ are $C^\infty$-diffeomorphic.  Moreover, since $X$ and $N$ are 
$C^\infty$-diffeomorphic, they are Nash diffeomorphic by a Nash diffeomorphism $\psi : X \to N$.  
Then $Bl(X,Y)$ and $Bl(N, \psi(Y))$ are Nash diffeomorphic. 
This proves the Lemma and the Proposition.   
\end{proof}

\begin{lem}\label{Nashblowup}
Let $N$ be a compact connected Nash manifold  and let $\pi : \tilde N \to N$ denote 
the blowup of $N$ along a nowhere dense Nash submanifold $Y$.  Then  $\tau(N)$ is a quasi-isomorphism 
 if and only if $ \tau(\tilde N)$ is a quasi-isomorphism.  
\end{lem} 

\begin{proof}
Let $\tilde Y= \pi \inv (Y)$ denote the exceptional divisor of $\pi$.  For each $p$ consider the diagram 
\begin{equation*}\minCDarrowwidth 1pt\begin{CD}
  @>>> H_{k+1} ( \Na_p C_* (N))  @>>> H_{k} ( \Na_p C_* (\tilde Y))  @>>> H_{k} ( \Na_p C_* (Y))  
\oplus H_{k} ( \Na_p C_* (\tilde N)) @>>>  
 \\ @. @VVV 
 @VVV @VVV  @.\\
  @>>> H_{k+1} ( F^{can}_p C_* (N))  @>>> H_{k} ( F^{can}_p C_* (\tilde Y))  @>>> H_{k} ( F^{can}_p C_* (Y))  
\oplus H_{k} ( F^{can} C_* (\tilde N)) @>>>  
\end{CD}\end{equation*}  
The top row is exact by Theorem \ref{Nashproperties}. For all manifolds $N$ and for all $p$ and $k$, we have
$$
H_{k} ( F^{can}_p C_* (N)) =  \begin{cases} 
  H_k(N) \qquad   k\geq -p \\
  \, 0  \qquad \quad  \quad  k<-p,
  \end{cases}
$$
so the short exact sequences \eqref{short} give that the bottom row is exact.
The lemma now follows from the inductive assumption and the Five Lemma.  
\end{proof}

Consequently it suffices to show that $\tau(N)$ is a quasi-isomorphism  for a single 
connected Nash manifold of each dimension $n$.  
We check this assertion for the standard sphere $S^n$ by showing that 
$$
H_{k} ( \Na_p C_* (S^n)) =  \begin{cases} 
  H_k (S^n) \quad  \text { if $k = 0$ or $n$ and } p\geq - k \\
  0  \quad \qquad \quad  \text { otherwise}. 
  \end{cases}
$$

Let $c\in \Na_pC_k (S^n)$, $k<n$, be a cycle described as in \eqref{NFdef} by  the Nash constructible function  
$\varphi_{c,p} : Z \to 2^{k+p} \Z$, where  $Z$ is the Zariski closure of $\supp c$.  Then $c$ can 
be contracted to a point.  More precisely, choose $p\in S^n\setminus Z$.  
Then $S^n \setminus \{p\}$ and $\R^n$ are isomorphic.   Define a Nash constructible function  $\Phi : Z\times \R \to 2^{k+p+1} \Z$ by the formula 
  \begin{equation*} 
  \Phi (x,t) = \begin{cases} 
  2 \varphi_{c,p} (x) \quad  \text { if } t\in [0,1] \\
  0  \qquad  \qquad \text { otherwise.} 
  \end{cases}
  \end{equation*}
  Then
  $$
  c\times [0,1] = \{(x,t) \in Z\times \R\ ;\ \Phi (x,t)\notin 2^{k+p+2} \Z \};
  $$
  so $c\times [0,1] \in \Na_pC_{k+1} (Z\times \R)$.  
  The morphism $f: Z \times \R \to \R^n$, $f(x,t) = tx$, is proper and for $k>0$ 
  $$
  \partial f_*( c\times [0,1]) =  f_*(  \partial c\times [0,1]) = c, 
  $$
  which shows that $c$ is a boundary in $\Na _p C_* (S^n)$.  
  If  $k=0$ then $\partial f_*( c\times [0,1]) =   c  - (\deg c) [0]$.  
  
If $c\in \Na_p C_n (S^n)$ is a cycle, then $c$ is a cycle in $ C_n (S^n)$; that is, either $c=0$ or $c=[S^n]$.
This completes the proof of Theorem \ref{isoNash}. 
\end{proof}


\subsection{Consequences for the weight filtration} \label{Nashweight}

\begin{cor}\label{iso}
For every real algebraic variety $X$ 
the localization of $\tau$ induces a quasi-isomorphism 
$\tau' (X): \Na C_* (X) \to \mathcal WC_*(X)$.   
\end{cor}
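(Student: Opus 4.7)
The plan is to invoke the naturality principle stated after Theorem~\ref{naturality} together with the uniqueness clause of Theorem~\ref{navarro}. Concretely, I want to view both $\Na C_*$ and $\mathcal WC_*$ as extensions of a functor defined on $\V$, check that they satisfy the defining properties (F1)--(F2), and use that $\tau$ is already a filtered quasi-isomorphism on $\V$.

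First I would verify that the restriction $\Na C_*|_\V : \V \to \Cat$ satisfies the hypotheses of Theorem~\ref{navarro}. Disjoint additivity (F1) is immediate from the definition of the Nash constructible filtration. For elementary acyclicity (F2), given an elementary acyclic square as in~\eqref{acyclic}, passing to real points gives an acyclic square in $\mathcal X_{\AS}$ in the sense of~\eqref{ASacyclic}, so Theorem~\ref{Nashproperties}(1) produces short exact sequences on both $\Na_p$ and the graded pieces $\Na_p/\Na_{p-1}$; the usual total complex argument then shows the simple filtered complex of the $\square^+_1$-diagram $\Na C_*$ applied to this square is acyclic. Hence $\Na C_*|_\V$ admits an extension $\mathcal W(\Na C_*) : \Sch \to \HCat$, unique up to filtered quasi-isomorphism.

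Next I would show that the functor $\Na C_* : \Sch \to \Cat$ constructed in Section~\ref{nash} \emph{is} such an extension. Indeed, $\Na C_*$ restricts to $\Na C_*|_\V$ on projective nonsingular varieties, and Theorem~\ref{Nashproperties} gives acyclicity for all acyclic squares in $\Sch$ (property~(1) of Theorem~\ref{navarro}) and additivity for closed inclusions (property~(2)). By the uniqueness part of Theorem~\ref{navarro}, there is a canonical filtered quasi-isomorphism $\Na C_*(X) \simeq \mathcal W(\Na C_*)(X)$ for every $X \in \Sch$.

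Now consider the morphism of functors $\tau : \Na C_*|_\V \to F^{can} C_*|_\V$. For a projective nonsingular real algebraic variety $M$, the set $\MR$ is a disjoint union of connected components, each of which can be realized as a compact Nash submanifold of an affine space via a rational diffeomorphism (see the discussion in Section~\ref{Nashmanifolds}). Theorem~\ref{isoNash} applied componentwise, together with disjoint additivity, shows that $\tau(M)$ is a filtered quasi-isomorphism. Applying Theorem~\ref{naturality} (in the strengthened form stated immediately after its proof) yields a morphism $\tau' : \mathcal W(\Na C_*) \to \mathcal W C_*$ that is a quasi-isomorphism on every $X \in \Sch$. Composing with the identification of the previous paragraph gives the desired filtered quasi-isomorphism $\tau'(X) : \Na C_*(X) \to \mathcal W C_*(X)$.

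The main obstacle is the verification step for $\Na C_*|_\V$: namely, translating the acyclic square of schemes in~\eqref{acyclic} into an acyclic square in $\mathcal X_{\AS}$ and invoking Theorem~\ref{Nashproperties}(1) rigorously, so that the hypothesis (F2) of~\cite{navarro} truly holds. Once this is in place, everything else is a formal application of the naturality and uniqueness theorems already at our disposal.
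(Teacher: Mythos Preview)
Your proposal is correct and follows essentially the same route as the paper's proof: the paper simply says that Theorem~\ref{Nashproperties} gives $\Na C_*$ the acyclicity and additivity properties of Theorem~\ref{navarro}, and then invokes Theorem~\ref{isoNash} together with Theorem~\ref{naturality}. You have unpacked exactly these steps, including the implicit identification $\Na C_* \simeq \mathcal W(\Na C_*)$ via uniqueness and the passage from projective nonsingular varieties to Nash manifolds needed to apply Theorem~\ref{isoNash}; the ``obstacle'' you flag (translating an acyclic square of schemes to one in $\mathcal X_{\AS}$) is routine---one just takes real points---and the paper does not even comment on it.
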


\begin{proof}
Theorem \ref{Nashproperties} yields that 
the functor $\Na C_*:\Sch \to \HCat$
satisfies properties (1) and (2) of Theorem \ref{navarro}.  Hence Theorem \ref{isoNash}
and Theorem \ref{naturality} give the desired result. 
\end{proof}

\begin{cor}\label{2filtrations}
Let $X$ be a real algebraic variety.  Then for all $p$ and $k$, $\Na _p C_k(X) = \A _p C_k(X) $.
\end{cor}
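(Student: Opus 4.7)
The plan is to verify that the Nash constructible filtration $\Na_*$ satisfies the axiomatic characterization of $\A_*$ in Theorem \ref{axioms}; the uniqueness in that theorem will then yield $\Na_* = \A_*$ on compact varieties, and the non-compact case will follow via closure in a compactification. For the reduction to the compact case I will show that, just as for $\A$, a chain $c\in C_k(U)$ on a non-compact variety $U$ lies in $\Na_p C_k(U)$ if and only if its closure $\bar c$ in any compactification $X$ lies in $\Na_p C_k(X)$. The key input is Proposition \ref{Nashadditivity}: a describing function for $c$ on $U$ extends by zero to a Nash constructible function on $X$ describing $\bar c$ up to a set of dimension $<k$, and restriction preserves Nash constructibility; independence of the choice of compactification follows as in Proposition \ref{Gcomp}.

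Axiom (1) of Theorem \ref{axioms} is immediate from Proposition \ref{Nashadditivity}, since extension by zero and restriction relate describing functions on $Y$ to those on $X$ supported in $Y$ and preserve the set-theoretic description of $c$. The forward direction of axiom (2) follows from functoriality: if $\varphi$ describes $c$, then $\pi^*\varphi$ is Nash constructible and describes $\pi\inv c$ modulo dim $<k$ (as $\pi$ is birational), so $\pi\inv c\in \Na_p C_k(\tilde X)$; applying the boundary operator (which preserves $\Na_*$) and axiom (1) then places $\partial(\pi\inv c)$ in $\Na_p C_{k-1}(D)$.

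The main obstacle is the reverse direction of axiom (2): given $\partial(\pi\inv c)\in \Na_pC_{k-1}(D)$, one must construct a Nash constructible describing function for $c$ on $X$. I will argue by induction on $k$, with the inductive hypothesis $\Na_* = \A_*$ in dimensions $<k$. The inclusion $\Na_p\subset \A_p$ at level $k$ is then easy: the forward direction above yields $\partial(\pi\inv c)\in \A_pC_{k-1}(D)$, so axiom (2) for $\A$ gives $c\in \A_pC_k(X)$. The opposite inclusion $\A_p\subset \Na_p$ is the delicate part. Given $c\in \A_pC_k(X)$, axiom (2) for $\A$ and the inductive hypothesis place $\partial(\pi\inv c)$ in $\Na_pC_{k-1}(D)$, realized by some describing function $\psi$ on $D$; from $\psi$ one must build a Nash constructible function $\Phi$ on $\tilde X$ describing $\pi\inv c$, after which $\pi_*\Phi$ is Nash constructible on $X$ and describes $c$. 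The construction of $\Phi$ proceeds stratum-by-stratum along the normal crossing divisor $D$, arranging that its geometric boundary (as in formula \eqref{NFpartial}) matches the given $\psi$, and exploits the fundamental fact from \cite{mccroryparusinski} that every semialgebraically constructible function $T\to 2^{\dim T}\Z$ is automatically Nash constructible, which provides enough flexibility to make the requisite top-dimensional adjustments.
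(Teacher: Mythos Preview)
Your overall strategy---show that $\Na_*$ satisfies the two axioms of Theorem~\ref{axioms} and invoke uniqueness---is exactly the paper's. Axiom~(1) and the forward implication of axiom~(2) go through as you say, and your reduction of the non-compact case to the compact one via closure matches the paper's treatment.

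The gap is in the reverse implication of axiom~(2). You propose to construct, from a Nash constructible function $\psi$ on $D$ describing $\partial(\pi^{-1}c)$, a Nash constructible function $\Phi$ on $\tilde X$ describing $\pi^{-1}c$, ``stratum-by-stratum'' using the fact that every constructible function $T\to 2^{\dim T}\Z$ is Nash constructible. But that fact gives freedom only at the level $2^k\Z$, whereas you need $\Phi$ to take values in $2^{k+p}\Z$ for $p<0$; the top-dimensional adjustments you invoke are too coarse by a factor of $2^{-p}$. Producing such a $\Phi$ directly is precisely the content of Proposition~\ref{Nashdivisorcriterion}, which in the paper is a \emph{consequence} of Corollary~\ref{2filtrations}, not an input to it (Bonnard's independent proof uses the fan criterion, Theorem~\ref{fan2}, which you do not invoke). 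So as written, this step is circular or at best unsubstantiated.

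The paper avoids this construction entirely. It first establishes Corollary~\ref{iso} (that $\Na C_*$ realizes the weight complex, via Theorem~\ref{isoNash} on Nash manifolds), and then argues by contradiction: if $\tilde c=\pi^{-1}c\in\Na_pC_k(\tilde X)\setminus\Na_{p-1}C_k(\tilde X)$ while $\partial\tilde c\in\Na_{p-1}C_{k-1}(\tilde X)$, then $\tilde c$ defines a nonzero class in $H_k\bigl(\Na_pC_*(\tilde X)/\Na_{p-1}C_*(\tilde X)\bigr)$. But $\tilde X$ is compact and nonsingular, so by Corollary~\ref{iso} and Proposition~\ref{purity} this group vanishes for $k\neq -p$; and since $\dim\tilde X=k$ there are no $(k{+}1)$-chains, so $\tilde c$ cannot be a relative boundary. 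This handles $p>-k$; the case $p=-k$ is the observation that a top-dimensional cycle on a compact nonsingular variety is a union of components, hence arc-symmetric. The key point you are missing is that the purity of the weight complex on $\tilde X$ does the work that your hands-on construction of $\Phi$ would otherwise have to do.
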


\begin{proof}
We show that the Nash constructible filtration satisfies properties (1) and (2) of  Theorem 
\ref{axioms}.  This is obvious for property (1).  We show property (2). Let 
$\tilde c =\pi \inv (c)$.  First we note that 
$$
c \in \Na_p C_k(X) \Leftrightarrow \tilde c \in \Na_p C_k(\tilde X) .
$$
Indeed, ($\Leftarrow$) follows from functoriality, since $c=\pi_*(\tilde c)$.  If $c$ is given by 
\eqref{Nashfiltration} then $\pi^* (\varphi_{c,p})$ is Nash constructible and describes $\tilde c$.
Thus it suffices to show 
$$
\tilde c \in \Na_p C_k(\tilde X)  \Leftrightarrow \partial \tilde c \in \Na_p C_{k-1}(\tilde X) 
$$
for $p\ge -k$, with the implication ($\Rightarrow$) being obvious.  
If $p=-k$ then each cycle is arc-symmetric. (Such a cycle is a union of connected components of $\tilde X$, since 
$\tilde X$ is nonsingular and compact.)  
For  $p>-k$ suppose, contrary to our claim,  that 
\begin{equation*}\label{contra}
\tilde c \in  \Na_p C_k(\tilde X) \setminus  \Na_{p-1} C_k (\tilde X)  \quad \text { and } \quad 
\partial \tilde c \in \Na_{p-1} C_{k-1}(\tilde X) .  
\end{equation*}
By Corollary \ref{iso} and Proposition \ref{purity}   
$$
H_k\left( \frac {\Na_p C_*(\tilde X)} {\Na_{p-1} C_*(\tilde X)} \right)=0,
$$
and $\tilde c$ has to be a relative boundary.  But $\dim \tilde X=k$ and $C_{k+1} (\tilde X)=0$.  
This completes the proof.  
\end{proof}



\section{Applications to real algebraic and analytic geometry}  

Algebraic subsets of affine space, or more generally 
 $Z$-open or $Z$-closed 
affine or projective sets in the sense of Akbulut and King \cite{akbulutking}, are  $\AS$ sets.  
So are the graphs of regular rational mappings.   Therefore Theorems \ref{Nashproperties}  and  \ref{isoNash} give the following result.

\begin{thm}\label{raag}
The Nash constructible filtration of closed semialgebraic chains defines a functor from the category of affine real algebraic sets 
and proper regular rational mappings to the category of bounded chain complexes of $\Z_2$ vector spaces with increasing bounded filtration.

This functor is additive and acyclic; that is, it satisfies 
properties (1) and (2) of Theorem \ref{Nashproperties}; and it
induces the weight spectral sequence and the weight filtration on Borel-Moore homology with coefficients in $\Z_2$.   

For compact nonsingular algebraic sets,  the reindexed  weight spectral sequence is pure: $\tilde E^2_{p,q} = 0$ for $p>0$.
\end{thm}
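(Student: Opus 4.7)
The plan is to realize this statement as a direct transport of the theorems already proved. First I would verify that the Akbulut--King category embeds into $\mathcal X_{\AS}$: by Example \ref{ASsets}(1) every $Z$-open or $Z$-closed subset of $\R\proj^n$ has Nash constructible characteristic function, hence is an $\AS$ set, and affine real algebraic sets fit in via a biregular embedding $\R^n\hookrightarrow \R\proj^n$. A proper regular rational morphism $f\colon X\to Y$ between such sets has an algebraic (hence $\AS$) graph in $X\times Y$ and is automatically continuous and proper. Local compactness is automatic for semialgebraic subsets of affine space. Consequently the functor $\Na C_*\colon\mathcal X_{\AS}\to\Cat$ of Theorem \ref{Nashproperties} restricts to a functor on the Akbulut--King category, giving the first assertion.

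For the additivity and acyclicity statements, I would note that a closed inclusion $Y\subset X$ of real algebraic sets, and an acyclic square \eqref{acyclic} of such sets, are particular instances of the inclusions and acyclic squares considered in Theorem \ref{Nashproperties}. Hence properties (1) and (2) of that theorem carry over verbatim to the restricted functor.

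To identify the induced filtration with the weight filtration, I would view each affine real algebraic set $X$ as the set of real points $\XR$ of a reduced affine scheme of finite type over $\R$, and invoke Corollary \ref{iso}: the localization of $\Na C_*(X)$ in $\HCat$ is quasi-isomorphic to $\mathcal W C_*(X)$. Since the weight spectral sequence of $X$ is by definition the spectral sequence of $\mathcal W C_*(X)$, and it converges to Borel--Moore homology with $\Z_2$ coefficients by Proposition \ref{borelmoore}, the spectral sequence and limit filtration produced by $\Na C_*$ coincide with the weight spectral sequence and weight filtration.

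For purity on a compact nonsingular algebraic set $X$, I would combine Corollary \ref{2filtrations} (so that $\Na_*$ agrees with the geometric filtration $\A_*$) with Theorem \ref{geomiso} to conclude that $\Na C_*(X)$ realizes the weight complex. By Proposition \ref{purity} applied to $X$, the graded quotients $H_k(\mathcal W_p C_*(X)/\mathcal W_{p-1}C_*(X))$ vanish except when $k=-p$, so the unreindexed $E^1_{p,q}$ is supported on the line $q=-2p$. Under the reindexing $p'=2p+q$, $q'=-p$, this line maps to $p'=0$, giving $\tilde E^2_{p',q'}=0$ for $p'\neq 0$, and in particular for $p'>0$. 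The only real point to be careful about is the embedding of the Akbulut--King category into $\mathcal X_{\AS}$, which boils down to the standard fact that regular rational morphisms have algebraic graphs; no further technical obstacle is expected.
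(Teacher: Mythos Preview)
Your argument for the first three assertions is essentially the paper's: the paper simply says ``Theorems \ref{Nashproperties} and \ref{isoNash} give the following result,'' and your restriction of $\Na C_*$ to the Akbulut--King category via the embedding into $\X_{\AS}$, together with Corollary \ref{iso} (a consequence of Theorem \ref{isoNash}), reproduces exactly that.

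For purity your route differs from the paper's, and it has a subtle gap. You propose to apply Proposition \ref{purity} to $X$, but that proposition is stated for a \emph{compact} nonsingular variety in the scheme-theoretic sense, i.e.\ a complete (proper over $\R$) scheme. A compact nonsingular real algebraic set in the sense of \cite{BCR}, viewed as the real points of its natural affine scheme, is \emph{not} complete when $\dim X>0$, so Proposition \ref{purity} does not apply to that model. What is needed is the observation that such an $X$ is the set of real points of some complete nonsingular scheme; the paper makes exactly this point (``every compact affine real algebraic set that is nonsingular \ldots\ admits a compact nonsingular complexification''). Once that is in place your route via Corollary \ref{2filtrations}, Theorem \ref{geomiso}, and Proposition \ref{purity} can be completed, since $\Na C_*$ depends only on the set of real points.

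The paper, however, bypasses Proposition \ref{purity} entirely and deduces purity directly from Theorem \ref{isoNash}: a compact nonsingular real algebraic set is a compact Nash manifold, so $\Na C_*(X)$ is filtered quasi-isomorphic to $F^{can}C_*(X)$, and the canonical filtration is pure by \eqref{canonical}. This is shorter and avoids passing through the scheme-theoretic statements at all.
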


For the last  claim of the theorem we note that every compact affine real 
algebraic set that is  nonsingular in the sense of  \cite{akbulutking} and
\cite{BCR} admits a compact nonsingular complexification.  Thus the claim follows from Theorem \ref{isoNash}.   

 The purity of $\tilde E^2$ implies the purity of 
$\tilde E^\infty$:  $\tilde E^\infty_{ p,  q} =0$ for $ p >0$. 
Consequently every nontrivial homology class  
 of a nonsingular compact affine or projective real algebraic  variety 
 can be represented by a semialgebraic arc-symmetric set,  a result proved
 directly in  \cite{kucharz} and \cite{aussois}.

\rem Theorem \ref{Nashproperties} and Theorem \ref{isoNash} can be used in more general contexts. 
A compact real analytic semialgebraic subset of a real algebraic variety is an $\AS$ set.  
A compact semialgebraic set that is the graph of a real analytic map, or more generally the graph  of an arc-analytic 
mapping (\emph{cf.}\ \cite{aussois}), is arc-symmetric.  
In section \ref{Nashweight} we have already used  that compact affine  Nash manifolds and 
graphs of Nash morphisms defined on compact Nash manifolds are arc-symmetric.    

\vskip.1in
   
The weight filtration of homology  
is an isomorphism invariant but not a homeomorphism invariant; this is  discussed in \cite{virtual} for 
the dual weight filtration of cohomology.  
   
 \begin{prop}\label{homeo}
 Let $X$ and $Y$ be locally compact $\AS$ sets, and let $f:X\to Y$ be a homeomorphism  
 with $\AS$ graph.  Then $f_*:\Na C_*(X)\to\Na C_*( Y)$  is  an isomorphism 
of filtered complexes.   

Consequently, $f_*$ induces an isomorphism of the weight spectral sequences of $X$ and $Y$ and of the weight filtrations of $H_*(X)$ and $H_*(Y)$.  Thus the virtual Betti  numbers 
\eqref{virtualbetti} of $X$ and $Y$ are equal. 
\end{prop}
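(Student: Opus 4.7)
My plan is to deduce the proposition almost entirely from the functoriality built into Theorem \ref{Nashproperties}, by observing that under the hypotheses, both $f$ and $f^{-1}$ are morphisms in $\mathcal{X}_{\AS}$.

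First, I would check that $f^{-1}\colon Y\to X$ belongs to $\mathcal{X}_{\AS}$. Its graph $\Gamma_{f^{-1}}\subset Y\times X$ is the image of $\Gamma_f\subset X\times Y$ under the factor-swap, which is a biregular isomorphism, so $\Gamma_{f^{-1}}$ is $\AS$ whenever $\Gamma_f$ is. Moreover, since $f$ is a homeomorphism of locally compact Hausdorff spaces, both $f$ and $f^{-1}$ are automatically proper continuous. Thus both $f$ and $f^{-1}$ qualify as morphisms in $\mathcal{X}_{\AS}$.

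Next, applying Theorem \ref{Nashproperties} (functoriality of $\Na C_*$ on $\mathcal{X}_{\AS}$), both $f_*\colon \Na C_*(X)\to \Na C_*(Y)$ and $(f^{-1})_*\colon \Na C_*(Y)\to \Na C_*(X)$ are morphisms of filtered chain complexes. From $f\circ f^{-1}=\mathrm{id}_Y$ and $f^{-1}\circ f=\mathrm{id}_X$, functoriality yields $f_*\circ (f^{-1})_*=\mathrm{id}$ and $(f^{-1})_*\circ f_*=\mathrm{id}$. Hence $f_*$ is an honest isomorphism of filtered complexes, not merely a filtered quasi-isomorphism.

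For the consequences, any isomorphism of filtered complexes induces isomorphisms on every page of the associated spectral sequence and on the induced filtration of the total homology. Using Corollary \ref{iso} to identify $\Na C_*(X)$ with a representative of $\WC_*(X)$ (and similarly for $Y$), we obtain the claimed isomorphism of weight spectral sequences and of weight filtrations on Borel--Moore homology. Equality of the virtual Betti numbers is then immediate from their definition \eqref{virtualbetti} as the alternating sums of the dimensions of the rows of the reindexed $\tilde E^2$ term. There is no real technical obstacle here; the content of the proposition is essentially the observation that the $\AS$ condition on a graph, unlike, say, regularity, is symmetric in the two factors, so that functoriality of $\Na C_*$ can be applied to $f^{-1}$ as well as to $f$.
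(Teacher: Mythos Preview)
Your argument is correct and is essentially the same as the paper's proof, only more explicit: the paper simply says the first claim follows from the functoriality of $\Na C_*$ on $\mathcal X_{\AS}$ (Theorem \ref{Nashproperties}) and that the rest follows from Theorems \ref{Nashproperties} and \ref{isoNash}, leaving implicit the observation you spell out, namely that the factor-swap carries $\Gamma_f$ to $\Gamma_{f^{-1}}$ so that $f^{-1}$ is again a morphism in $\mathcal X_{\AS}$. One small remark: for general locally compact $\AS$ sets the ``weight spectral sequence'' is by definition that of the Nash constructible filtration, so invoking Corollary \ref{iso} (which is stated for real algebraic varieties) is not strictly needed; the isomorphism of filtered complexes already gives everything.
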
 

\begin{proof}
The first claim follows from the fact that $\Na C_* : \X_{\mathcal A\mathcal S} \to \Cat$ is a functor; see the proof of  Theorem \ref{Nashproperties}.   The rest of the proposition then follows from 
  Theorem \ref{Nashproperties}    and Theorem \ref{isoNash}. 
\end{proof}

\rem Propostion \ref{homeo} applies, for instance, to regular homeomorphisms such as $f:\R \to \R$, $f(x) = x^3$.  
The construction of the virtual Betti numbers of \cite{virtual} was extended to $\AS$ sets by G. Fichou in 
\cite{fichou}, where their invariance by Nash diffeomorphism was shown.  The arguments of  \cite{virtual} 
and \cite{fichou} use the weak factorization theorem of \cite{AKMW}.


\subsection{The virtual Poincar\'e polynomial}  Let $X$ be a locally compact $\AS$ set.  The virtual Betti 
numbers give rise to the \emph{virtual Poincar\'e polynomial }
\begin{equation}
\beta (X) = \sum _i  \beta_i(X) \, t^i.  
\end{equation}
For real algebraic varieties the virtual Poincar\'e polynomial was first introduced in \cite{virtual}.  
For $\AS$ sets,  not necessarily locally compact, it was defined in  \cite{fichou}.  It satisfies the following properties (see \cite{virtual}, \cite{fichou}):
\begin{enumerate}
\item
\emph{Additivity:} For finite disjoint union $X= \sqcup X_i$, $\beta(X) = \sum \beta (X_i)$. 
\item
\emph{Multiplicativity:}  $\beta (X\times Y) =  \beta(X) \cdot \beta (Y)$.  
\item
\emph{Degree:}  For $X\ne \emptyset$, $\deg \beta (X) =  \dim X$ and the leading coefficient $\beta(X)$ is  strictly 
positive.  
\end{enumerate}
(If $X$ is not locally compact we can decompose it into a finite disjoint union of locally compact $\AS$ sets 
$X= \sqcup X_i$ and 
define $\beta (X) = \sum \beta (X_i)$.)

We say that a function $X\to e(X)$ defined on real algebraic sets is an \emph{invariant}
 if it an isomorphism 
invariant, that is $e(X)=e(Y)$ if $X$ and $Y$ are isomorphic (by a biregular rational mapping).   We say that $e$ is additive, resp. multiplicative, if $e$ takes values in an abelian group and $e(X\setminus Y) = e(X) -e(Y)$ for all $Y\subset X$, resp. $e$ takes values in a ring and 
$e(X\times Y) = e(X) e(Y)$ for all $X,Y$.  
The following theorem states that the virtual Betti polynomial is a universal additive, or additive and multiplicative, invariant defined 
on real algebraic sets (or real points of real algebraic varieties in general), among those invariants that 
do not distinguish Nash diffeomorphic compact nonsingular 
real algebraic sets.  

\begin{thm}
Let $e$ be an additive invariant defined on real algebraic sets.  
Suppose that for every pair $X, Y$ of 
Nash diffeomorphic nonsingular 
compact real algebraic sets we have $e(X)=e(Y)$.  
Then there exists a unique group homomorphism $h_e : \Z[t] \to G$ such that $e = h_e \circ \beta$. If, moreover, $e$  is multiplicative  then $h_e$ is a ring homomorphism.   
\end{thm}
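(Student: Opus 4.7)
The plan is to define $h_e$ on the generators $t^n\in\Z[t]$ in the unique way forced by the desired identity $e=h_e\circ\beta$, then verify that identity first on compact nonsingular varieties by induction on dimension, and then on all real algebraic sets by additivity. Since $\beta(\R\proj^n)=1+t+\cdots+t^n$, the relation $e=h_e\circ\beta$ forces
$$
h_e(t^n)=e(\R\proj^n)-e(\R\proj^{n-1})
$$
(with the convention $\R\proj^{-1}=\emptyset$). This is both the definition of $h_e$ and a proof of its uniqueness, because additivity together with $\beta([\R\proj^n])=1+t+\cdots+t^n$ already forces $\beta$ to be surjective onto $\Z[t]$.

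The heart of the argument is the verification of $e(X)=h_e(\beta(X))$ for every compact nonsingular real algebraic set $X$, which I would carry out by induction on $d=\dim X$; the case $d=0$ is immediate from additivity. For the inductive step, additivity reduces to $X$ connected, and Proposition~\ref{NashMikhalkin} supplies Nash multi-blowups $\pi\colon\tilde X\to X$ and $\pi'\colon\tilde X'\to\R\proj^d$ with $\tilde X$ and $\tilde X'$ Nash diffeomorphic; using the relative Nash--Tognoli lemma employed in the proof of Proposition~\ref{NashMikhalkin}, I would arrange that all blowup centers are algebraic so that $\tilde X$ and $\tilde X'$ are themselves compact nonsingular real algebraic sets. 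Each elementary blowup $\bl_C M\to M$ in such a sequence fits into an elementary acyclic square, so additivity of $e$ gives
$$
e(\bl_C M)-e(M)=e(E)-e(C),
$$
and the identical formula holds for the additive invariant $h_e\circ\beta$. Since $\dim C,\dim E<d$, the inductive hypothesis applies to the right-hand side, so telescoping along the multi-blowup yields
$$
e(\tilde X)-e(X)=h_e(\beta(\tilde X))-h_e(\beta(X)),
$$
and an analogous identity over $\R\proj^d$. The Nash-diffeomorphism hypothesis on $e$ gives $e(\tilde X)=e(\tilde X')$, while Proposition~\ref{homeo} gives $\beta(\tilde X)=\beta(\tilde X')$; subtracting the two telescoped identities produces
$$
e(X)-h_e(\beta(X))=e(\R\proj^d)-h_e(\beta(\R\proj^d))=0
$$
by the very definition of $h_e$. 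The bookkeeping of this telescoping, together with the careful algebraic arrangement of Mikhalkin's multi-blowup, is the main technical hurdle.

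Extension to arbitrary real algebraic sets is then formal. Combining Nagata compactification with Hironaka's resolution of singularities, every real algebraic variety is equivalent, modulo the scissor relation, to a $\Z$-linear combination of compact nonsingular varieties of no larger dimension; so the agreement of the additive invariants $e$ and $h_e\circ\beta$ on compact nonsingular varieties propagates to all real algebraic sets. In the multiplicative case, once $e=h_e\circ\beta$ is established, multiplicativity of $e$ and of $\beta$ yields
$h_e(\beta(X)\beta(Y))=h_e(\beta(X\times Y))=e(X\times Y)=e(X)e(Y)=h_e(\beta(X))h_e(\beta(Y))$
for all $X,Y$; together with the surjectivity of $\beta$ this gives $h_e(PQ)=h_e(P)h_e(Q)$ for all $P,Q\in\Z[t]$, and $h_e(1)=e(\mathrm{pt})=1$ follows from $e(\mathrm{pt})=e(\mathrm{pt})^2$ combined with $e\not\equiv 0$ (the degenerate case $e\equiv 0$ being handled trivially by $h_e=0$).
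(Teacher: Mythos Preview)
Your proof is correct and follows essentially the same route as the paper's: define $h_e$ on monomials, reduce to compact nonsingular varieties via compactification and resolution, and then use Mikhalkin's theorem together with relative Nash--Tognoli and the blowup relation (by induction on dimension) to transport the identity $e=h_e\circ\beta$ from a single model manifold to all of them. The only cosmetic differences are that the paper sets $h_e(t^n)=e(\R^n)$ directly (equivalent to your $e(\R\proj^n)-e(\R\proj^{n-1})$ by additivity) and uses $S^n$ rather than $\R\proj^n$ as the anchor manifold; the paper also leaves the multiplicative statement to the reader, whereas you spell it out.
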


\begin{proof}
Define $h(t^n) = e(\R^n)$.  We claim that the additive invariant $\varphi(X) = h(\beta (X)) - e(X)$ vanishes for every real algebraic set $X$.   This is the case for $X=\R^n$ since $\beta (\R^n)=t^n$.  
By additivity, this is also the case for $S^n=\R^n \sqcup pt$.  
By the existence of an algebraic compactification and resolution of singularities, it suffices to show the claim for compact nonsingular real 
algebraic sets.

Let $X$ be a compact nonsingular real 
algebraic set and let $\tilde X$ be the blowup of $X$ along a smooth nowhere dense center.  Then, 
using induction on $\dim X$, we see that $\varphi(X) =0$ if and only if $\varphi (\tilde X) =0$.  
By the relative version of the
Nash-Tognoli Theorem,  the same result holds if we have that $\tilde X$ is Nash diffeomorphic to the blowup of 
a nowhere dense Nash submanifold of $X$.  
Thus the claim and hence the first statement follows from Mikhalkin's Theorem.  
\end{proof}

Following earlier results of Ax and Borel, K. Kurdyka showed in  \cite {kurdyka2} that any regular injective self-morphism 
$f:X\to X$ of a real algebraic variety is surjective.  It was then showed in \cite{parusinski} that an injective
 continuous self-map 
$f:X\to X$  of a locally compact $\AS$ set, such that the graph of $f$ is an $\AS$ set, is a homeomorphism.  The arguments of both \cite{kurdyka2} and \cite{parusinski} are topological and use the continuity of $f$ in essential way.  
The use of additive invariants allows us to handle the non-continuous case.

\begin{thm}
Let $X$ be an $\AS$ set and let $f:X\to X$ be a map with $\AS$ graph.  If $f$ is injective then it is surjective. 
\end{thm}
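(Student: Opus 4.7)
The plan is to use additivity and $\AS$-homeomorphism invariance of the virtual Poincar\'e polynomial $\beta$. Writing $Y = f(X)\subseteq X$, I aim to prove $\beta(X) = \beta(Y)$. Additivity then yields $\beta(X\setminus Y) = 0$, and the strict positivity of the leading coefficient of $\beta$ on nonempty $\AS$ sets (property (3) of $\beta$) forces $X\setminus Y = \emptyset$, so $f$ is surjective.

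The bulk of the work is the identity $\beta(X) = \beta(Y)$. I would work with the graph $\Gamma := \Gamma_f \subset X\times X$, which is $\AS$ by hypothesis. Injectivity of $f$ makes both coordinate projections $\pi_1 : \Gamma \to X$ and $\pi_2 : \Gamma \to Y$ continuous bijections with $\AS$ graphs (their graphs are diagonal images of $\Gamma$). The key step is to partition $\Gamma$ into finitely many locally closed $\AS$ pieces $\Gamma_1,\dots,\Gamma_N$ such that both $\pi_1|_{\Gamma_j}$ and $\pi_2|_{\Gamma_j}$ are $\AS$-homeomorphisms onto their images $X_j := \pi_1(\Gamma_j)$ and $Y_j := \pi_2(\Gamma_j)$, with each $X_j$ and $Y_j$ again $\AS$. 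I would obtain such a partition by choosing a common $\AS$-stratification of $\Gamma$, $X$, and $Y$ by Nash submanifolds refining semialgebraic trivializations of both projections, and then using arc-symmetric closure (\cite{kurdyka1}, \cite{aussois}) to remain inside the $\AS$ category; on each stratum the projections are continuous bijections of Nash manifolds of equal dimension, hence $\AS$-homeomorphisms by invariance of domain.

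Once this stratification is in hand, injectivity of $f$ gives $X = \bigsqcup_j X_j$ and $Y = \bigsqcup_j Y_j$ as finite disjoint unions of $\AS$ sets (in particular $Y$ is itself $\AS$, which needs checking). Proposition \ref{homeo} applied to each piecewise $\AS$-homeomorphism $f|_{X_j} : X_j \to Y_j$, obtained as $\pi_2|_{\Gamma_j}\circ (\pi_1|_{\Gamma_j})^{-1}$, gives $\beta(X_j) = \beta(Y_j)$, and summing via additivity produces $\beta(X) = \sum_j \beta(X_j) = \sum_j \beta(Y_j) = \beta(Y)$. The main obstacle is the stratification step: semialgebraic triviality applied to the two projections of $\Gamma$ is classical, but the delicate part is controlling that the strata, their projection images, and the piecewise inverses all remain in the $\AS$ category rather than merely being semialgebraic. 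Continuity of the projections on each piece is not the issue, since $\pi_1,\pi_2$ are globally continuous; what must be arranged is that on each stratum they are \emph{open}, which by dimension comparison is automatic on equidimensional Nash manifold strata.
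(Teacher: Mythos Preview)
Your proposal is correct and follows the same strategy as the paper: reduce to a finite $\AS$ decomposition on whose pieces $f$ is an $\AS$-homeomorphism, apply Proposition~\ref{homeo} piecewise, and then use additivity and the degree property of $\beta$ to conclude $X\setminus f(X)=\emptyset$.

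The only real difference is in how the decomposition is produced. The paper does it in two passes on $X$ itself: first take a semialgebraic stratification $X=\bigsqcup S_j$ on which $f$ is real analytic, upgrade it to an $\AS$ stratification using $\AS$-closure (via $\dim\overline{A}^{\AS}\setminus A<\dim A$ for $A\in\AS$), and then, for each $S_j$, run the same argument on $f^{-1}$ restricted to $f(S_j)$; this makes both $f$ and $f^{-1}$ continuous on each refined piece, hence a homeomorphism. Your route through the graph $\Gamma$ and the two projections, with invariance of domain on equidimensional Nash-manifold strata replacing the bi-analyticity argument, is a legitimate variant; the paper's two-pass approach is slightly more direct in that it automatically produces $\AS$ images $f(X_i)$ (as pieces of the second-pass stratification), which is exactly the point you flag as delicate.
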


\begin{proof}
It suffices to show that there exists a finite decomposition $X= \sqcup X_i$ into locally compact $\AS$ sets such that 
for each $i$, $f$ restricted to $X_i$ is a homeomorphism onto its image.  Then, by Corollary \ref{homeo},  
$$
\beta(X\setminus \sqcup_i f(X_i)) = \beta(X) - \sum_i\beta(X_i) =0,
$$
and hence, by the degree property, $ X\setminus \sqcup_i f(X_i)=\emptyset$.  

To get the required decomposition first we note that by classical theory there exists a semialgebraic stratification of $X=\sqcup S_j$ such that $f$ restricted to each stratum is real analytic.  We show that we may 
choose strata belonging to the class $\AS$.  (We do not require the strata to be connected.) 
By \cite{kurdyka2}, \cite{parusinski},  each semialgebraic subset $A$ of a real algebraic 
variety $V$ has a minimal $\AS$ closure in $V$, denoted $\overline A ^{\AS}$.  Moreover if $A$ is $\AS$ then  
$\dim  \overline A ^{\AS} \setminus A < \dim A$.   Therefore, we may take as the first subset of 
the decomposition the complement in $X$ of the $\AS$ closure of the union of strata $S_j$ of dimension $<\dim X$, 
and then proceed by induction on dimension.  

Let  $X=\sqcup S_j$ be a stratification with $\AS$ strata and such that $f$ is analytic on each stratum.  
Then, for each stratum $S_j$,  we apply the above argument to $f\inv$ defined on $f(S_j)$.   The induced 
subdivision of $f(S_j)$, and hence of $S_j$, satisfies the required property.  
\end{proof}

Of course, in general, surjectivity does not apply injectivity for a self-map.  Nevertheless we have the following 
result.  

\begin{thm}
Let $X$ be an $\AS$ set and let $f:X\to X$ be a surjective map with $\AS$ graph.  Suppose that 
 there exist a finite $\AS$ decomposition $X= \sqcup Y_i$ and $\AS$ sets $F_i$ such that for each 
$i$, $f\inv (Y_i)$ is homeomophic to $Y_i\times F_i$ by a homeomorphism with $\AS$ graph.  
Then $f$ is injective. 
\end{thm}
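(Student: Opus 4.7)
The plan is to extract the conclusion directly from the virtual Poincar\'e polynomial $\beta$ using the additivity, multiplicativity, and degree properties established just above. Since $f$ is surjective and the $Y_i$ are pairwise disjoint, the preimages form a disjoint $\AS$ decomposition $X = \sqcup_i f\inv(Y_i)$, and after discarding empty pieces I may assume every $F_i$ is nonempty. Additivity of $\beta$ gives both $\beta(X) = \sum_i \beta(Y_i)$ and $\beta(X) = \sum_i \beta(f\inv(Y_i))$, while the hypothesis $f\inv(Y_i) \cong Y_i \times F_i$ combined with multiplicativity yields $\beta(f\inv(Y_i)) = \beta(Y_i)\beta(F_i)$. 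Subtracting the two expressions for $\beta(X)$ produces
$$\sum_i \beta(Y_i)\bigl(\beta(F_i) - 1\bigr) = 0.$$

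The crux is to leverage the positivity of leading coefficients to force each $F_i$ to be a single point. For each nonempty $F_i$, either $F_i$ is a single point, in which case $\beta(F_i) = 1$ and the summand vanishes, or $\beta(F_i) - 1$ is a nonzero polynomial with strictly positive leading coefficient: if $\dim F_i \geq 1$ this is the leading coefficient of $\beta(F_i)$ by the degree property, while if $\dim F_i = 0$ and $|F_i|\geq 2$ it is the positive integer $|F_i| - 1$. Multiplying by $\beta(Y_i)$, whose leading coefficient is also strictly positive, each nonzero summand $\beta(Y_i)(\beta(F_i) - 1)$ is a polynomial with strictly positive leading coefficient. Extracting the coefficient of the highest-degree monomial in the identity above, no cancellation is possible, so every summand must vanish, and hence every $F_i$ is a single point.

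Finally, reading the hypothesis in the natural way, so that the homeomorphism $f\inv(Y_i)\cong Y_i\times F_i$ intertwines $f|_{f\inv(Y_i)}$ with the projection onto $Y_i$, the fact that each fiber $F_i$ is a single point implies that $f$ restricts to a homeomorphism $f\inv(Y_i)\to Y_i$ for every $i$. Since the $Y_i$ partition $X$, this assembles to global injectivity of $f$. The one delicate point in the entire argument is the leading-coefficient step: it relies essentially on the degree property, which guarantees that the polynomials $\beta(Y_i)(\beta(F_i)-1)$ cannot cancel one another, so the vanishing of their sum forces each to be zero individually; the rest is bookkeeping.
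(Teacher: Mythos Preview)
Your argument is essentially the paper's own proof, carried out with more detail: the paper writes the single line $0=\beta(X)-\beta(f(X))=\sum\beta(Y_i)(\beta(F_i)-1)$ and then invokes the positive leading coefficient to force each factor $\beta(F_i)-1$ to vanish, exactly as you do. Your explicit remark that one must read the homeomorphism $f^{-1}(Y_i)\cong Y_i\times F_i$ as being over $Y_i$ (so that $F_i=\{pt\}$ actually yields injectivity of $f$ on each piece) is a fair observation---the paper's statement leaves this implicit and simply stops at $\beta(F_i)=1$.
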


\begin{proof}
We have 
$$
0 =  \beta (X) -  \beta (f(X))  = \sum \beta(Y_i) (\beta(F_i)-1) . 
$$
Then for each $i$,  $\beta(F_i)-1 =0$, otherwise the polynomial on the right-hand side would be nonzero with strictly positive leading coefficient.    
\end{proof}


\subsection{Application to spaces of orderings}  
Let $V$ be an irreducible real algebraic subset of $\R^N$.  A function $\varphi : V \to \Z$ is 
called \emph{algebraically constructible} if it satisfies one of the following equivalent properties 
(\emph{cf.}\ \cite{mccroryparusinski}, \cite{parusinskiszafraniec}):
\begin{enumerate}
\item
There exist a finite family of proper regular morphisms $f_i:Z_i\to V$, 
and integers $m_i$,  such that for all $x\in V$, 
\begin{equation} \label{algconstr}
\varphi (x) = \sum_i m_i \chi ( 
f_{i}\inv (x)\cap Z_i).
\end{equation}
\item
There are finitely many polynomials $P_i\in \R[x_1\dots, x_N]$ such that for all $x\in V$,
$$
\varphi (x) = \sum _i  \sgn P_i(x).
$$
\end{enumerate}
Let $K=K(V)$ denote the field 
of rational functions of $V$.  A function $\varphi : V \to \Z$ is   generically 
algebraically constructible if and only if can be identified, up to a set of dimension smaller $\dim V$, with 
the signature of a quadratic form over $K$.  Denote by $\X$ the real spectrum of $K$.  A (semialgebraically) constructible function on $V$, up to a set of dimension smaller $\dim V$, 
can be identified with a continuous function $\varphi : \X \to \Z$ (\cite {BCR} ch.\ 7, \cite {marshall}, \cite{bonnard2}).  
The representation theorem 
of Becker and Br\"ocker gives a fan criterion for recognizing  generically algebraically constructible function on $V$.  The following two theorems are due to I. Bonnard.

\begin{thm}\label{fan1}\label{representation} {\rm (\cite{bonnard2}) }
A constructible function $\varphi : V \to \Z$ is generically algebraically constructible  if 
and only for any finite fan $F$ of $\X$
\begin{equation}\label{fancriterion}
\sum _{\sigma \in F} \varphi (\sigma ) \equiv 0 \mod |F|.
\end{equation}
\end{thm}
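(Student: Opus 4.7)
The plan is to translate the statement into one about total signatures of quadratic forms over the function field $K = K(V)$ and then invoke the Becker--Br\"ocker representation theorem. By the dictionary recalled just before the statement, a constructible function $\varphi$ on $V$ determines, up to a set of dimension $< \dim V$, a continuous function $\tilde\varphi : \X \to \Z$; and $\varphi$ is generically algebraically constructible exactly when $\tilde\varphi$ is the total signature $\sigma \mapsto \sgn_\sigma(q)$ of some quadratic form $q = \langle a_1,\dots,a_r\rangle$ over $K$. Moreover the $a_i \in K^*$ may be taken to be polynomials on $V$, since replacing $a_i$ by $a_i Q_i^2$ does not change its sign, and the sign of $a_i = P_i/Q_i$ then agrees with $\sgn P_i$ off the zero locus of the $Q_i$, which has dimension $< \dim V$.

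For the necessity direction, fix a finite fan $F \subset \X$ and $a \in K^*$. The very definition of a fan is that $\sigma \mapsto \sgn_\sigma(a) \in \{\pm 1\}$ is a character of the elementary abelian $2$-group structure carried by $F$, so its sum over $F$ is either $|F|$ or $0$; in either case $\sum_{\sigma\in F}\sgn_\sigma(a) \equiv 0 \pmod{|F|}$. Summing this congruence over the $r$ diagonal entries of $q$, and using the identification $\varphi \leftrightarrow \tilde\varphi$, yields \eqref{fancriterion}.

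For sufficiency I would invoke the Becker--Br\"ocker representation theorem applied to the field $K$: a continuous $\Z$-valued function on $\X$ is the total signature of some quadratic form over $K$ if and only if it satisfies $\sum_{\sigma\in F}\tilde\varphi(\sigma) \equiv 0 \pmod{|F|}$ on every finite fan $F$. Applied to $\tilde\varphi$, this produces a diagonal form $\langle a_1,\dots,a_r\rangle$ over $K$ whose total signature agrees with $\tilde\varphi$ on $\X$; clearing denominators as above gives polynomials $P_1,\dots,P_r \in \R[x_1,\dots,x_N]$ with $\varphi = \sum_i \sgn P_i$ off a subset of $V$ of dimension $< \dim V$, which is the required generic representation \eqref{algconstr}.

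The main obstacle is the sufficiency direction, which rests entirely on the Becker--Br\"ocker theorem---a deep input encoding the theory of fans, the stability index, and chain-length computations in the Witt ring of $K(V)$. Everything else in the argument is a routine translation between the three languages of constructible functions on $V$, continuous $\Z$-valued functions on the real spectrum $\X$, and quadratic forms over $K$, together with the clearing-of-denominators observation that lets one pass from rational to polynomial representatives of each sign.
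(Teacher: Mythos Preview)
The paper does not give its own proof of this theorem: it is stated with the attribution ``(\cite{bonnard2})'' and used as a black box. So there is nothing to compare your argument against in the present paper. Your sketch---reducing to continuous $\Z$-valued functions on the real spectrum, identifying generic algebraic constructibility with being a total signature, and then invoking the Becker--Br\"ocker representation theorem---is the standard route and is the one Bonnard follows. Your remark that the sufficiency direction is entirely carried by Becker--Br\"ocker is accurate; the rest of your translation (passing from rational to polynomial representatives, the character computation on a fan for necessity) is correct and routine.
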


For the notion of a fan see  \cite {BCR} ch.\ 7, \cite {marshall}, \cite{bonnard2}.  The number of elements $|F|$ 
of a finite fan $F$ is always a power of $2$.  
It is known that for every finite fan $F$ of $\X$ there 
exists a valuation ring $B_F$ of $K$ compatible with $F$, and on whose residue field the fan $F$ induces exactly one or two distinct orderings.  
Denote by  $\mathcal F$ the set  of these fans of $K$ for which the  residue field induces only one ordering.  

\begin{thm} \label{fan2}  {\rm (\cite{bonnard}) }
A constructible function $\varphi : V \to \Z$ is generically Nash constructible if and only if  \eqref{fancriterion} holds 
for every fan $F\in \mathcal F$.
\end{thm}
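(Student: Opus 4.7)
The plan is to mirror the strategy used for the algebraic constructible case (Theorem \ref{fan1}), with both directions reducing to the basic building blocks of Nash constructible functions. By the definition in Section 4.2 (and the characterisation \eqref{Nashsecond}), every generically Nash constructible function $\varphi$ on $V$ is a $\Z$-linear combination of pushforwards $f_*\1_{Z'}$ where $Z'$ is a connected component of the set of real points of a complete real algebraic variety $Z$. Both the $\mathcal{F}$-fan criterion and the class of Nash constructible functions are stable under $\Z$-linear combinations and under the pushforward–pullback formulas \eqref{Ngraphs1}–\eqref{Ngraphs2}, so it is enough to handle the basic case $\varphi = \1_{Z'}$ on the real points of a smooth compactification.

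For necessity, fix $F \in \mathcal{F}$ with associated valuation ring $B = B_F \subset K(V)$, let $k(B)$ denote the residue field, and let $\bar\sigma$ be the unique ordering of $k(B)$ induced by $F$. By definition of a fan compatible with $B$, the orderings $\sigma \in F$ are precisely the liftings of $\bar\sigma$ through $B$. I would show that for an arc-symmetric set $Z'$, the value $\1_{Z'}(\sigma)$ depends only on the class $\bar\sigma$ in the real spectrum of $k(B)$, not on the particular lift $\sigma \in F$: any two such liftings can be joined, up to a set of dimension smaller than $\dim V$, by a real analytic arc along which arc-symmetry forces $\1_{Z'}$ to be constant. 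Consequently the sum $\sum_{\sigma \in F} \1_{Z'}(\sigma)$ is either $0$ or $|F|$, giving the required congruence modulo $|F|$.

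For sufficiency I would argue by induction on $d = \dim V$. Continuous constructible functions on $\X$ correspond, modulo lower-dimensional sets, to constructible functions on $V$, so one may view $\varphi$ as a continuous function on $\X$. The idea is to use the $\mathcal{F}$-fan criterion to build an arc-symmetric stratification of $V$ adapted to $\varphi$ and then write $\varphi$ generically as a $\Z$-combination of characteristic functions of arc-symmetric strata. Concretely, the $\mathcal{F}$-criterion with $|F| = 2$ is the statement that $\varphi(\sigma_1) \equiv \varphi(\sigma_2) \pmod 2$ whenever $\sigma_1, \sigma_2$ extend the same ordering of the residue field of some valuation ring of $K(V)$; this is exactly the arc-symmetry condition in disguise. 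Higher-order congruences ($|F| = 2^n$) allow one to lift this parity-agreement to mod $2^n$ agreement, producing a Nash constructible function $\psi$ with $\varphi - \psi$ divisible by a higher power of $2$ and supported in dimension $<d$, to which induction applies. The required passage from pointwise parity agreement to an actual arc-symmetric stratum uses the existence of arc-symmetric closure (cf.\ Example \ref{ASsets}(2)) together with the characterisation of $\AS$ sets via Nash constructible indicator functions.

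The hard part will be the sufficiency step, specifically the reconstruction of a Nash constructible $\psi$ from the fan data. For algebraically constructible functions Bonnard had the Becker--Br\"ocker representation theorem identifying them with signatures of quadratic forms over $K$; no equally clean algebraic object represents Nash constructible functions, so the argument must be carried out geometrically via arc-symmetric closures and proper Nash pushforwards, keeping careful track of valuations whose residue field admits only one ordering. Managing the inductive bookkeeping on the exceptional set where $\varphi - \psi$ lives, and ensuring that the arc-symmetric strata produced at each stage remain compatible with the $\mathcal{F}$-fan hypothesis, is the most delicate part of the proof.
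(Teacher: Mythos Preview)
The paper does not prove Theorem~\ref{fan2}. It is quoted as a result of Bonnard \cite{bonnard} and is used, together with the companion Theorem~\ref{fan1}, only as background for the applications in \S4.6 (Theorem~\ref{question2} and Proposition~\ref{Nashdivisorcriterion}). So there is nothing in the paper to compare your proposal against.

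As a sketch toward an independent proof, your necessity direction is reasonable in spirit: reducing to $\1_{Z'}$ for $Z'$ arc-symmetric and arguing that all orderings of a fan $F\in\mathcal F$ give the same value because they lift a single ordering of the residue field. But the step ``any two such liftings can be joined, up to a set of dimension smaller than $\dim V$, by a real analytic arc along which arc-symmetry forces $\1_{Z'}$ to be constant'' is not justified; turning a fan compatible with a valuation ring into an honest real analytic arc in $V$ requires a geometric interpretation of the valuation (e.g.\ via a place with nonsingular center, or via Puiseux-type parametrisations), and you have not said how you obtain one. Your sufficiency sketch is, as you acknowledge, essentially a wish list: the claim that the $|F|=2$ case of the $\mathcal F$-criterion ``is exactly the arc-symmetry condition in disguise'' is the entire content of the theorem in that degree and needs a real argument, and the inductive passage from mod~$2^{n-1}$ to mod~$2^n$ that you propose is closer to the proof of Theorem~\ref{question2} in this paper (which \emph{uses} Theorem~\ref{fan2}) than to a proof of Theorem~\ref{fan2} itself. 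If you want a genuine proof, you should consult Bonnard's paper \cite{bonnard} rather than try to reconstruct it from the statement.
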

 
The following question is due to M. Coste and M. A. Marshall (\cite{marshall} Question 2):

\vskip.1in \emph{Suppose that a constructible function $\varphi : V\to \Z$ satisfies \eqref{fancriterion} for every fan $F$ of $K$ 
with $|F| \le 2^n$.  Does there exists a generically algebraically constructible function $\psi : V\to \Z$ such that 
for each $x\in V$, $ \varphi (x)  - \psi (x) \equiv 0 \mod 2^n $?}

\vskip.1in We give a positive answer to the Nash constructible analog of this question.

\begin{thm}\label{question2}
Suppose that a constructible function $\varphi : V\to \Z$ satisfies \eqref{fancriterion} for every fan $F \in \mathcal F$ 
with $|F| \le 2^n$.  Then there exists a generically Nash constructible function $\psi : V\to \Z$ such that 
for each $x\in V$, 
$ \varphi (x)  - \psi (x) \equiv 0 \mod 2^n$.
\end{thm}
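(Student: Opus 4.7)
The plan is to proceed by induction on $n$. For $n=0$ there is nothing to prove (take $\psi=0$). For the inductive step, since the hypothesis for $|F|\le 2^n$ implies the hypothesis for $|F|\le 2^{n-1}$, the inductive hypothesis produces a generically Nash constructible $\psi_{n-1}$ with $\varphi-\psi_{n-1}\in 2^{n-1}\Z$ pointwise. Setting $\eta:=(\varphi-\psi_{n-1})/2^{n-1}$, an integer-valued constructible function, it then suffices to find a generically Nash constructible $\chi$ with $\eta-\chi\in 2\Z$ pointwise, since $\psi:=\psi_{n-1}+2^{n-1}\chi$ would then answer the problem. Thus the whole argument reduces to the base case $n=1$ applied to $\eta$, modulo the usual freedom to alter any generically Nash constructible function on a set of dimension $<\dim V$ to enforce a pointwise congruence.

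For the base case, given $\varphi$ satisfying \eqref{fancriterion} for every $F\in\mathcal F$ with $|F|\le 2$, I would put $S=\{x\in V:\varphi(x)\text{ is odd}\}$. The $|F|=2$ hypothesis reads $\1_S(\sigma_1)=\1_S(\sigma_2)$ on every $2$-fan in $\mathcal F$. Since $\1_S$ takes only the values $0$ and $1$ and $|F|$ is a power of $2$, the fan condition $\sum_{\sigma\in F}\1_S(\sigma)\equiv 0\pmod{|F|}$ for $F\in\mathcal F$ is equivalent to $\1_S$ being identically constant on $F$. The structure theory of fans in $\mathcal F$ (cf.\ \cite{marshall}) should then be used to show that constancy of $\1_S$ on every $2$-fan in $\mathcal F$ propagates to constancy on every $F\in\mathcal F$, essentially because an arbitrary $F\in\mathcal F$ of cardinality $2^k$ is pairwise connected through suitable $2$-element subfans coming from refinements of its valuation data. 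Bonnard's Theorem \ref{fan2} then gives that $\1_S$ is generically Nash constructible, so $\chi:=\1_S$ satisfies $\varphi-\chi\in 2\Z$ pointwise.

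The main obstacle is verifying that $\eta$ satisfies the base-case hypothesis in the inductive step. Unwinding definitions, one needs $(\varphi-\psi_{n-1})(\sigma_1)+(\varphi-\psi_{n-1})(\sigma_2)\equiv 0\pmod{2^n}$ on every $2$-fan in $\mathcal F$, whereas combining the $|F|=2$ hypothesis on $\varphi$ with Bonnard's criterion applied to $\psi_{n-1}$ only yields the weaker congruence $\pmod{2}$. The remedy is to strengthen the inductive assertion: produce $\psi_{n-1}$ so that $\varphi-\psi_{n-1}$ obeys the fan condition to the enhanced modulus $2^{n-1}\cdot|F|$ for every $F\in\mathcal F$ with $|F|\le 2^n$, not merely to modulus $|F|$. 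Building $\psi$ as a telescoping sum $\sum_{i=0}^{n-1}2^i\chi_i$, with each $\chi_i$ supplied by the base case at level $i$ and Bonnard's criterion used at each step to upgrade the fan conditions one power of $2$ at a time, should close the induction and yield the desired $2^n$-approximation.
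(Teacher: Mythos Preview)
Your base case $n=1$ is plausible: if $\mathcal F$ is closed under taking subfans (which it is, since a valuation ring compatible with $F\in\mathcal F$ inducing a single residue ordering is also compatible with any $F'\subset F$ and induces at most one ordering there), then any two elements of a large $F\in\mathcal F$ already form a $2$-fan in $\mathcal F$, and constancy of $\1_S$ on such pairs forces constancy on all of $F$, so Bonnard's criterion applies.

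The genuine gap is in the inductive step, and your proposed remedy does not close it. Write $\varphi(\sigma)=2a(\sigma)+r(\sigma)$ with $r(\sigma)\in\{0,1\}$; your $\chi_0=\1_{S_0}$ picks off $r$, and you need $\varphi_1=(\varphi-\chi_0)/2$ to satisfy the $2$-fan condition, i.e.\ $a(\sigma)\equiv a(\tau)\pmod 2$ on every $2$-fan in $\mathcal F$. But the hypotheses on $\varphi$ give only $\varphi(\sigma)\equiv\varphi(\tau)\pmod 2$ on $2$-fans and $\sum_F\varphi\equiv 0\pmod{2^n}$ on $2^n$-fans; neither yields $\varphi(\sigma)\equiv\varphi(\tau)\pmod 4$ on $2$-fans, which is what you need. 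The ``enhanced modulus $2^{n-1}\cdot|F|$'' you propose to carry inductively is precisely this missing information, and there is no purely fan-theoretic mechanism to manufacture it from the given data. The telescoping sum $\sum 2^i\chi_i$ stalls at the very first step for the same reason.

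The paper's proof is structurally different and avoids this obstacle entirely. It runs a double induction on $n$ and on $k=\dim V$. After subtracting $\psi_{n-1}$ one may assume $2^{n-1}\mid\varphi$; then one passes to a compact nonsingular model with $\varphi$ locally constant off a normal crossing divisor $D=\bigcup D_i$, and forms the chain $c$ with $\varphi_{c,p}=\varphi$, $p=n-k-1$. The point is that the boundary functions $\partial_{D_i}\varphi$ live on $(k-1)$-dimensional varieties and satisfy the inductive hypothesis for $n-1$, so by induction on dimension $\partial c\in\Na_p C_{k-1}(V)$. The identification $\Na_*=\A_*$ (Corollary~\ref{2filtrations}), which rests on the whole weight-complex machinery of the paper, then gives $c\in\Na_p C_k(V)$, which is the assertion. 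Thus the proof trades control on higher-order fan congruences (which you cannot get) for control on lower-dimensional strata via resolution of singularities.
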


\begin{proof}
We proceed by induction on $n$ and on $k =\dim V$.  The case $n=0$ is trivial.  

Suppose $\varphi : V\to \Z$ satisfies \eqref{fancriterion} for every fan $F \in \mathcal F$ 
with $|F| \le 2^n$, $n\ge 1$.  By the inductive assumption, $\varphi$ is congruent modulo $2^{n-1}$ to a generically 
Nash constructible function $\psi_{n-1}$.  By replacing $\varphi$ by $\varphi - \psi_{n-1}$, we may suppose $2^{n-1}$ divides 
$\varphi$.  

We may also suppose $V$ compact and nonsingular, just choosing a model for $K=K(V)$.  Moreover, by resolution of singularities, we may assume that $\varphi $ is constant in the complement of a normal crossing 
divisor $D=\bigcup D_i\subset V$.  

Let $c$ be given by 
\eqref{NFdef} with $\varphi_{c,p}=\varphi$ and $p= n-k-1$.    
At a generic point $x$ of $D_i$  define 
$\partial_{D_i} \varphi(x) $ as the average of the values of $\varphi$ on the  local connected components of $V\setminus D$ at $x$.  Then $\partial c = \sum_i \partial _{i} c$, where $ \partial _{i} c$ is described 
by $\partial_{D_i} \varphi$ as in \eqref{NFpartial} (see \cite{bonnard}).  
Note that the constructible functions  $\partial_{D_i} \varphi$  satisfy
the inductive assumption for $n-1$.  Hence  each $\partial_{D_i} \varphi$ is 
congruent to a generically Nash constructible function modulo $2^{n-1}$.  In other words 
 $\partial c\in \Na_p C_{k-1}(V)$.  Then by Corollary 
\ref{2filtrations}  we have  $c\in \Na_p C_k(V)$, which implies the statement of the theorem.  
\end{proof}

Using Corollary \ref{2filtrations} we obtain the following result. The original proof was based on the fan criterion (Theorem \ref{fan2}).  

\begin{prop} \label{Nashdivisorcriterion} {\rm (\cite{bonnard}) }
Let $V\subset \R^N$ be compact, irreducible, and nonsingular.  Suppose that the constructible function  
$\varphi : V\to \Z$ is constant in the complement of a normal crossing 
divisor $D=\bigcup D_i\subset V$.   Then $\varphi$ is generically Nash constructible if and only if $\partial_D \varphi$ is generically Nash constructible.  
\end{prop}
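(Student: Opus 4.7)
The forward direction follows directly from the fact that the link operators $\tfrac{1}{2}\Lambda$ and $\tfrac{1}{2}\Omega$ preserve Nash constructibility: up to restriction to $D$, the function $\partial_D \varphi$ is $\tfrac{1}{2}\Lambda\varphi$ or $\tfrac{1}{2}\Omega\varphi$ according to the parity of $k = \dim V$, so Nash constructibility is inherited.

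For the converse I would argue in parallel with the proof of Theorem \ref{question2}, by induction on $n \ge 0$ that $\varphi$ is generically congruent modulo $2^n$ to a generically Nash constructible function $\psi_n$. Because $\varphi$ has finite range on the compact set $V$, taking $n$ sufficiently large (and, if necessary, truncating $\psi_n$ within bounded range at each step) forces $\psi_n = \varphi$ generically, which delivers the conclusion. The base case is $\psi_0 = 0$.

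For the inductive step, replacing $\varphi$ by $\varphi - \psi_{n-1}$ arranges that $\varphi$ takes values in $2^{n-1}\Z$ generically; by the forward direction applied to $\psi_{n-1}$, the function $\partial_D\varphi$ remains generically Nash constructible. Set $p = n - k - 1$ and let
\[
c = \{x \in V : \varphi(x) \notin 2^{n}\Z\} \in C_k(V),
\]
well-defined up to a set of dimension $< k$, so that $\varphi_{c,p} = \varphi$ witnesses $c \in \Na_p C_k(V)$ precisely when $\varphi$ is generically Nash constructible modulo $2^n$. Since $\varphi$ is constant on $V\setminus D$, we have $\supp\partial c \subset D$; because $V$ is already nonsingular and $D$ is a normal crossing divisor in $V$, the identity $V \to V$ is a resolution adapted to $c$ in the sense of Theorem \ref{axioms}(2). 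Applying that theorem together with Corollary \ref{2filtrations} yields
\[
c \in \Na_p C_k(V) \;\Longleftrightarrow\; \partial c \in \Na_p C_{k-1}(D).
\]
By formula \eqref{NFpartial} and the geometric interpretation given in section \ref{defNashfiltration}, the witness $\varphi_{\partial c, p}$ is precisely $\partial_D\varphi$, which by hypothesis is generically Nash constructible and (being an average of $2^{n-1}$-multiples) takes values in $2^{n-2}\Z$. This provides $\partial c \in \Na_p C_{k-1}(D)$, hence $c \in \Na_p C_k(V)$, and unwrapping the witness produces the required $\psi_n$.

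The main obstacle is the $2$-adic bookkeeping: one has to verify that the correspondence between the $\Z$-valued statement ``$\varphi$ generically Nash constructible'' and the $\Z_2$-filtration levels $\Na_p$ behaves coherently under the boundary and restriction operation, so that ``$\partial_D\varphi$ generically Nash constructible'' produces $\partial c$ in exactly the right filtration level; and to arrange that the NC witnesses obtained at successive steps can be combined and truncated in bounded range so that the induction genuinely terminates once $n$ exceeds the $2$-adic size of $\max|\varphi|$.
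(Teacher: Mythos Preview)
Your proposal is correct and follows essentially the same route as the paper's proof: induct on the $2$-adic level, use the identity $V\to V$ as an adapted resolution, and invoke Corollary~\ref{2filtrations} (via Theorem~\ref{axioms}(2)) to pass from $\partial c\in\Na_p$ to $c\in\Na_p$; the paper simply writes this more tersely as ``apply the same argument to $\varphi-\psi$'' and indexes by $p$ rather than by your $n=k+p+1$. Your final paragraph on bookkeeping is an honest acknowledgment of details the paper also suppresses (in particular, keeping the successive $\psi$'s compatible with $D$ so that $\partial_D(\varphi-\psi)$ remains Nash constructible), but these do not amount to a different method.
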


\begin{proof}
We show only  ($\Leftarrow$).  Suppose $2^{k+p} | \varphi$ generically, where $k =\dim V$, and let $c$ be given by 
\eqref{NFdef} with $\varphi_{c,p}=\varphi$.  Then by our assumption  $\partial c\in \Na_p C_{k-1}(V)$.  
By Corollary 
\ref{2filtrations}  we have  $c\in \Na_p C_k(V)$, which shows that, modulo $2^{k+p+1}$, $\varphi$ coincides with a 
generically Nash constructible function $\psi$.  Then we apply the same argument to $\varphi -\psi$.  
\end{proof}

\rem We note that the above Proposition implies neither Theorem \ref{question2} nor Corollary \ref{2filtrations}.    
Similarly the analog of Proposition \ref{Nashdivisorcriterion}, proved in \cite{bonnard2}, does not give 
an answer to Coste and Marshall's question.





\section{The toric filtration}
\label{toric}

In their 
investigation of the relation between the homology of the real and complex points of a toric variety \cite{BFMH},
Bihan \emph{et al.}\ define a filtration on the  cellular chain complex of a real toric variety. We  prove
that this filtered complex is quasi-isomorphic to the semialgebraic chain complex with the Nash constructible
filtration. Thus the toric filtered chain complex realizes the weight complex, and the real toric spectral sequence of \cite{BFMH}
is isomorphic to the weight spectral sequence.

For background on toric varieties see \cite{fulton}. We use a simplified version of the notation of \cite{BFMH}. 
Let  $\Delta$ be a rational fan in $\R^n$, and let $X_\Delta$ be the real
toric variety defined by $\Delta$. The group $\T=(\R^*)^n$ acts on $X_\Delta$, and the $k$-dimensional
orbits $\mathcal O_\sigma$ of this action correspond to the codimension $k$ cones $\sigma$ of $\Delta$. 

The positive part 
$X^+_\Delta$ of $X_\Delta$ is a closed semialgebraic subset of $X_\Delta$, and there is a canonical retraction
$r:X_\Delta\to X^+_\Delta$ that can be identified with the orbit map of the action of the finite group $T = (S^0)^n$ on $X_\Delta$,
where $S^0=\{-1,+1\}\subset \R^*$. The $T$-quotient of the $k$-dimensional $\T$-orbit $\mathcal O_\sigma$ 
is a semialgebraic $k$-cell $c_\sigma$ of $X^+_\Delta$, and $\mathcal O_\sigma$ is a disjoint union of  $k$-cells,
each of which maps homeomorphically onto $c_\sigma$ by the quotient map. This decomposition defines a cell structure on $X_\Delta$  such that $X_\Delta^+$
is a subcomplex and the 
quotient map is cellular. Let $C_*(\Delta)$ be the cellular chain complex of $X_\Delta$ with coefficients in $\Z_2$. The closures of the cells 
of $X_\Delta$ are not necessarily compact, but they are  semialgebraic subsets of $X_\Delta$. Thus we have a chain map 
\begin{equation}\label{chainmap}
\alpha:C_*(\Delta)\to C_*(X_\Delta)
\end{equation}
from cellular chains to semialgebraic chains.

The \emph{toric filtration} of the cellular chain complex $C_*(\Delta)$ is defined as follows \cite{BFMH}. For each $k\geq 0$
we define vector subspaces
\begin{equation}
\label{toricfiltration}
0 = \mathcal T_{-k-1 } C_k (\Delta) \subset \mathcal T_{-k} C_k (\Delta) \subset \mathcal T_{-k+1}  
C_k (\Delta) \subset \cdots  \subset \mathcal T_{0} C_k (\Delta) = C_k (\Delta),
\end{equation} 
such that $\partial_k(\mathcal T_pC_k(\Delta))\subset \mathcal T_pC_{k-1}(\Delta)$ for all $k$ and $p$.

Let $\sigma$ be a cone of the fan $\Delta$, with $\codim \sigma = k$.
Let $C_k(\sigma)$ be the  subspace of $C_k(\Delta)$ spanned by
the $k$-cells of $\mathcal O_\sigma$. Then
$$
C_k(\Delta) = \bigoplus_{\codim \sigma\ =\ k} C_k(\sigma).
$$
The orbit $\mathcal O_\sigma$ has a distinguished point $x_\sigma\in c_\sigma\subset X^+_\Delta$.
Let $T_\sigma = T/T^{x_\sigma}$, where $T^{x_\sigma}$ is the $T$-stabilizer of $x_\sigma$. We
identify the orbit $T\cdot x_\sigma$ with the multiplicative group $T_\sigma$. Each $k$-cell of $\mathcal O_\sigma$
contains a unique point of the orbit $T\cdot x_\sigma$. Thus we can make the identification
$C_k(\sigma) = C_0(T_\sigma)$, the set of formal sums $\sum_i a_i[g_i]$, where $a_i\in\Z_2$ and
$g_i\in T_\sigma$. The multiplication of $T_\sigma$ defines a multiplication on $C_0(T_\sigma)$, so
that $C_0(T_\sigma)$ is just the group algebra of $T_\sigma$ over $\Z_2$ .

Let $\mathcal I_\sigma$ be the augmentation ideal of the algebra $C_0(T_\sigma)$,
\begin{eqnarray*}
& & \mathcal I_\sigma = \Ker [\epsilon: C_0(T_\sigma)\to \Z_2], \\
& & \epsilon\sum_i a_i[g_i] = \sum_i a_i.
\end{eqnarray*}
For $p\leq 0$ we define $\mathcal T_pC_k(\sigma)$ to be the subspace
corresponding to the ideal $(\mathcal I_\sigma)^{-p} \subset C_0(T_\sigma)$, and we let
$$
\mathcal T_pC_k(\Delta) = \sum_{\codim \sigma\ =\ k}\mathcal T_pC_k(\sigma).
$$

If $\sigma < \tau$ in $\Delta$ and $\codim\tau = \codim\sigma - 1$, the geometry
of $\Delta$ determines a group homomorphism $\varphi_{\tau\sigma}:T_\sigma\to T_\tau$
(see \cite{BFMH}). Let $\partial_{\tau\sigma}: C_k(\sigma)\to C_{k-1}(\tau)$ be the induced algebra homomorphism.
We have $\partial_{\tau\sigma}(\mathcal I_\sigma)\subset \mathcal I_\tau$. 
The boundary map $\partial_k:C_k(\Delta)\to C_{k-1}(\Delta) $ is given by
$\partial_k(\sigma) = \sum_\tau\partial_{\tau\sigma}(\tau)$, and
$\partial_k(\mathcal T_pC_k(\Delta))\subset \mathcal T_pC_{k-1}(\Delta)$,
so $\mathcal T_pC_*(\Delta)$ is a subcomplex of $C_*(\Delta)$.

\begin{prop}\label{filtrations}
For all $k\geq 0$ and $p\leq 0$, the chain map $\alpha$ (\ref{chainmap}) takes the toric filtration (\ref {toricfiltration}) to the Nash filtration (\ref{Nashfiltration}),
$$
\alpha(\mathcal T_pC_k(\Delta))\subset\mathcal N_pC_k(X_\Delta).
$$
\end{prop}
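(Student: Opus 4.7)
The plan is to reduce the assertion, via the ring structure of $\Z_2[T_\sigma]$, to a geometric claim about unions of cells in the orbit $\mathcal{O}_\sigma$ indexed by a subgroup of $T_\sigma$, and then to witness that claim by constructing a Nash constructible function out of characters of the quotient $T_\sigma/H$ together with a double-cover pushforward. Since $\mathcal{T}_p C_k(\Delta) = \bigoplus_{\codim \sigma = k}\mathcal{T}_p C_k(\sigma)$, I fix a codimension-$k$ cone $\sigma$ and set $m = -p \geq 0$, so $\mathcal{T}_p C_k(\sigma) = \mathcal{I}_\sigma^m$. Because $T_\sigma \cong (\Z_2)^k$ is elementary abelian and we work in characteristic~$2$, one has $([1]+[g])^2=0$ and $([1]+[g])([1]+[g']) = \sum_{h\in \langle g,g'\rangle}[h]$; hence every $m$-fold product $\prod_{i=1}^m([1]+[g_i])$ vanishes unless the $g_i$ are $\Z_2$-independent, in which case it equals $\sum_{h\in H}[h]$ with $H=\langle g_1,\ldots,g_m\rangle$ of rank $m$, and such sums span $\mathcal{I}_\sigma^m$. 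Therefore it suffices to show, for every subgroup $H\leq T_\sigma$ of rank $m$, that $\sum_{h\in H}[c_h]\in \Na_p C_k(X_\Delta)$.

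Given such an $H$, I pick generators of $\widehat{T_\sigma/H}$ and lift them through $T\twoheadrightarrow T_\sigma$ to obtain $k-m$ characters $\chi_1,\ldots,\chi_{k-m}:T\to\{\pm 1\}$, each a sign-monomial in the coordinates of $\T$; these restrict to regular nonvanishing functions on $\mathcal{O}_\sigma$. After normalizing so that $\chi_j>0$ on the base cell $c_\sigma$, $T$-equivariance forces $\sgn \chi_j \equiv \chi_j(h)$ on $c_h$, giving
\[
U_H := \bigcup_{h\in H}c_h = \{x \in \mathcal{O}_\sigma : \chi_j(x) > 0 \text{ for all } j\}.
\]
For each $j$, the projective double cover $Z_j = \{(x,[U:V]) \in X_\Delta\times \R\proj^1 : U^2 = V^2\chi_j(x)\}$ (possibly after clearing denominators in $\chi_j$) is algebraic and admits a proper projection $f_j: Z_j \to X_\Delta$ whose fibre Euler characteristic equals $2$, $1$ or $0$ according as $\chi_j$ is positive, zero or negative; hence $(f_j)_*\mathbf{1}_{Z_j} = 2\,\mathbf{1}_{\{\chi_j > 0\}}$ off the zero/pole locus of $\chi_j$, a set of dimension $<k$ within $\overline{\mathcal{O}_\sigma}$. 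Multiplying these Nash constructible functions together and then with $\mathbf{1}_{\overline{\mathcal{O}_\sigma}}$ (Nash constructible, since $\overline{\mathcal{O}_\sigma}$ is algebraic) yields a Nash constructible $\varphi : X_\Delta \to 2^{k-m}\Z$ equal to $2^{k-m}\,\mathbf{1}_{U_H}$ off a set of dimension $<k$; by \eqref{NFdef} this exhibits $\sum_{h\in H}[c_h] \in \Na_{-m}C_k(X_\Delta) = \Na_pC_k(X_\Delta)$, as required.

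The main technical obstacle will be the global handling of the poles of the characters $\chi_j$ along the toric boundary $X_\Delta\setminus \T$, so that $Z_j$ can be realised as a genuine projective algebraic variety and $f_j$ is proper; this may require passing to a toric refinement of $\Delta$ on which all the $\chi_j$ become regular, after which one has to verify that the resulting boundary discrepancies remain confined to a set of dimension $<k$ within $\overline{\mathcal{O}_\sigma}$ and are therefore invisible to the Nash constructible filtration on $k$-chains. An alternative route, avoiding explicit double covers, would be to invoke Bonnard's fan criterion (Theorem~\ref{fan2}) directly on the character description of $U_H$.
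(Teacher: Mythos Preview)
Your approach is correct and essentially the same as the paper's: reduce to a single orbit $\mathcal O_\sigma$, observe that $\mathcal I_\sigma^m$ is spanned by sums $[H]=\sum_{h\in H}[h]$ over rank-$m$ subgroups, and then witness $2^{k-m}\1_{U_H}$ as a Nash constructible function by a double-cover/squaring construction.

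The one difference is in execution, and it bears on the obstacle you flag. The paper does not work on $X_\Delta$; it passes to $\mathcal O_\sigma\cong(\R^*)^k$ (so the Nash filtration is computed on the orbit, then pushed to $X_\Delta$ by closure) and uses the compactification $(\mathbb P^1(\R))^k$. After choosing coordinates so that $H=T_S$ is a coordinate subgroup, the Nash constructible function is produced in one stroke as $\varphi=\big(f_*\1_{(\mathbb P^1(\R))^k}\big)\big|_{(\R^*)^k}$, where $f:(\mathbb P^1(\R))^k\to(\mathbb P^1(\R))^k$ is the identity on the $S$-coordinates and the squaring map $(u{:}v)\mapsto(u^2{:}v^2)$ on the others. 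This is globally regular on $(\mathbb P^1)^k$, so there are no poles to clear, no toric refinement is needed, and the $k-m$ double covers you build separately are packaged into a single proper morphism. Your route works once you localize to the orbit and its $(\mathbb P^1)^k$-compactification; the paper's map is just a tidier realization of the same idea.
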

\begin{proof}
It suffices to show that for every cone $\sigma\in \Delta$ with $\codim\sigma = k$,
$$
\alpha(\mathcal T_pC_k(\sigma))\subset \mathcal N_pC_k(\mathcal O_\sigma).
$$
The variety $\mathcal O_\sigma$ is isomorphic to $(\R^*)^k$, the toric variety of the trivial fan $\{0\}$ in $\R^k$,
and the action of $T_\sigma$ on $\mathcal O_\sigma$ corresponds to the action of $T_k = \{-1,+1\}^k$ on $(\R^*)^k$.
The $k$-cells of $(\R^*)^k$ are its connected components. Let $\mathcal I_k\subset C_0(T_k)$ be the augmentation
ideal. Let $q = -p$, so $0\leq q\leq k$. The vector space $C_0(T_k)$ has dimension $2^k$, and for each $q$ the
quotient $\mathcal I^q/\mathcal I^{q+1}$ has dimension $\binom{k}{q}$. A basis for $\mathcal I^q/\mathcal I^{q+1}$
can be defined as follows. Let $t_1,\dots,t_k$ be the standard generators of the multiplicative group $T_k$, 
$$
t_i = (t_{i1},\dots,t_{ik}),\  t_{ij} = \begin{cases} -1 \qquad  i = j  \\ +1 \qquad i \neq j \\ \end{cases}
$$

If $S\subset \{1,\dots,k\}$, let $T_S$ be the subgroup of $T_k$ generated by $\{t_i\ ;\ i\in S\}$, and define 
$[T_S]\in C_0(T_k)$ by 
$$
[T_S] = \sum_{t\in T_S}[t].
$$
 Then $\{[T_S]\ ; |S| = q\}$ is a basis for $\mathcal I^q/\mathcal I^{q+1}$ (see \cite{BFMH}).

To prove that $\alpha((\mathcal I_k)^q)\subset \mathcal N_{-q}C_k((\R^*)^k)$ we just need to show that if $|S| = q$ then
$\alpha([T_S])\in \mathcal N_{-q}C_k((\R^*)^k)$. Now the chain $\alpha([T_S])\in C_k((\R^*)^k)$ is represented by the
semialgebraic set $A_S\subset (\R^*)^k$, 
$$
A_S= \{(x_1,\dots,x_k)\ ;\ x_i > 0,\ i\notin S \},
$$
 and $ \varphi = 2^{k-q}\1_{A_S}$ is Nash constructible. To see this consider the compactification
  $(\mathbb P^1(\R))^k$ of $(\R^*)^k$.
 We have $\varphi = \tilde\varphi|(\R^*)^k$, where $\tilde\varphi = f_*\1_{(\mathbb P^1(\R))^k}$, with 
 $f:(\mathbb P^1(\R))^k \to (\mathbb P^1(\R))^k$ defined as follows. If $z = (u:v) \in \mathbb P^1(\R)$, let $f_1(z) = (u:v)$, and $f_2(z) = (u^2:v^2)$. Then
$$
f(z_1,\dots, z_k) = (w_1,\dots, w_k),\ w_i = \begin{cases} f_1(z_i) \qquad i\in S \\ f_2(z_i) \qquad i\notin S\\ \end{cases}
$$
This completes the proof.
\end{proof}

\begin{lem}\label{local}
Let $\sigma$ be a codimension $k$ cone of $\Delta$, and let
$$
C_i(\sigma) = \begin{cases} C_k(\sigma) \qquad i = k \\ 0 \qquad \qquad i\neq k \\ \end{cases}
$$
For all $p\leq 0$,
$$
\alpha_*: H_*(\mathcal T_pC_*(\sigma))\to H_*(\mathcal N_pC_*(\mathcal O_\sigma))
$$
is an isomorphism.
\end{lem}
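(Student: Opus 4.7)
The plan is to identify both sides of $\alpha_*$ explicitly. First, because $C_*(\sigma)$ is concentrated in degree $k$ with zero differential, $H_i(\mathcal{T}_p C_*(\sigma))=0$ for $i\neq k$, and $H_k(\mathcal{T}_p C_*(\sigma))=\mathcal{T}_p C_k(\sigma)=\mathcal{I}_\sigma^{-p}$, with basis $\{[T_S]:|S|\geq -p\}$ of dimension $\sum_{j\geq -p}\binom{k}{j}$. On the other side, $\mathcal{O}_\sigma\cong(\R^*)^k$ has dimension $k$, so $C_{k+1}(\mathcal{O}_\sigma)=0$ and
\[
H_k(\Na_p C_*(\mathcal{O}_\sigma))=\ker\partial_k\cap\Na_p C_k(\mathcal{O}_\sigma).
\]
A $k$-chain in $(\R^*)^k$ is a cycle precisely when it is (up to a lower-dimensional set) a union of the $2^k$ connected components (quadrants) of $(\R^*)^k$; the chains $A_S=\{x:x_i>0\text{ for }i\notin S\}$ form a basis of $\ker\partial_k$, and Proposition \ref{filtrations} gives $\alpha([T_S])=A_S\in\Na_{-|S|}$. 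Therefore $\alpha_*$ in degree $k$ reduces to the inclusion $\mathrm{span}\{A_S:|S|\geq -p\}\hookrightarrow \ker\partial_k\cap\Na_p C_k(\mathcal{O}_\sigma)$.

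It remains to prove (a) that this inclusion is an equality and (b) that $H_i(\Na_p C_*(\mathcal{O}_\sigma))=0$ for $i<k$. I would prove both by additivity (Theorem \ref{Nashproperties}(2)) applied to the compactification $\bar X=(\R\proj^1)^k\supset\mathcal{O}_\sigma$ with complement $D=\bar X\setminus\mathcal{O}_\sigma$, a normal crossings divisor. The resulting long exact sequence
\[
\cdots\to H_i(\Na_p C_*(D))\to H_i(\Na_p C_*(\bar X))\to H_i(\Na_p C_*(\mathcal{O}_\sigma))\to H_{i-1}(\Na_p C_*(D))\to\cdots
\]
reduces the problem to computing the filtered homologies of $\bar X$ and $D$. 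Since $\bar X$ is a compact Nash manifold, Theorem \ref{isoNash} yields $H_i(\Na_p C_*(\bar X))=H_i(\bar X)=\Z_2^{\binom{k}{i}}$ for $i\geq -p$ and zero otherwise. The divisor $D$ is a union of $2k$ copies of $(\R\proj^1)^{k-1}$ meeting along smaller tori; its filtered homology is computed by induction on $k$, applying additivity recursively along a cubical hyperresolution of $D$ by intersections of its irreducible components.

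The main obstacle is the inductive bookkeeping: one must track the filtered homology of the stratified $D$ with enough precision that the long exact sequence produces exactly the expected $\binom{k}{-p}$-dimensional contribution to $H_k(\Na_p C_*(\mathcal{O}_\sigma))$ in bidegree $(p,k-p)$ and vanishes in lower degrees. Once established, a dimension count forces the inclusion in (a) to be an equality, and $\alpha_*$ is the claimed isomorphism.
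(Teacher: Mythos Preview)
Your reduction is correct: the problem comes down to showing that $\ker\partial_k \cap \Na_p C_k((\R^*)^k)$ is exactly $\mathrm{span}\{A_S : |S| \geq -p\}$, together with vanishing in degrees below $k$. Your proposed route---compactify to $(\R\proj^1)^k$, apply additivity (Theorem~\ref{Nashproperties}(2)), use purity on the compact nonsingular $(\R\proj^1)^k$, and compute the filtered homology of the boundary divisor $D$ inductively via its normal-crossings decomposition---is a legitimate strategy, but you stop before doing the computation. The ``inductive bookkeeping'' you flag is real work: to extract the correct answer from the long exact sequence you must know not only the dimensions of $H_*(\Na_p C_*(D))$ but also the connecting maps, and you do not carry this out.

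The paper avoids all of this machinery. For the key equality in degree $k$, rather than computing $\dim(\ker\partial_k \cap \Na_p C_k)$ externally, it shows directly that $A_S \notin \Na_{-q}C_k$ whenever $|S| < q$, by a short induction on $k$ using only the definition of the Nash filtration via Nash constructible functions. One passes to the closure $\bar A_S \subset \R^k$; if $\bar A_S$ were described by a Nash constructible function $\varphi : \R^k \to 2^{k-q}\Z$, then choosing any $j \notin S$ and applying the boundary operator $\partial_{W_j}\varphi$ (the average of $\varphi$ over local components along the hyperplane $W_j = \{x_j = 0\}$) would exhibit $\bar A_S \cap W_j$ as lying in $\Na_{-q}C_{k-1}(W_j)$. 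But $\bar A_S \cap W_j$ is again a set of the same shape in dimension $k-1$, contradicting the inductive hypothesis. No compactification, no long exact sequence, no Mayer--Vietoris on $D$.

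So the paper's argument is both shorter and more elementary than your outline, trading the formal additivity property for a hands-on manipulation of Nash constructible functions. Your approach has the virtue of being systematic and of explicitly isolating the vanishing of $H_i(\Na_p C_*((\R^*)^k))$ for $i<k$ as a separate point, on which the paper is quite brief; but the direct argument is what makes the lemma a lemma rather than a spectral-sequence computation.
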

\begin{proof}
Again we only need to consider the case $\mathcal O_\sigma = (\R^*)^k$, where
$\sigma$ is the trivial cone $0$ in $\R^n$. Now
$$
H_i(C_*(0)) = \begin{cases} C_k(0) \qquad i = k \\ 0 \qquad\qquad i\neq k\\ \end{cases}
$$
and
$$
H_i(C_*((\R^*)^k)) = \begin{cases} \Ker \partial_k \qquad i = k\\ 0 \qquad \qquad i\neq k\\ \end{cases}
$$
where $\partial_k: C_k((\R^*)^k)\to C_{k-1}((\R^*)^k)$. The vector space $\Ker \partial_k$ has
basis the cycles represented by the components of $(\R^*)^k$, and $\alpha:C_k(0)\to C_k((\R^*)^k)$
is a bijection from the cells of $C_k(0)$ to the components of $(\R^*)^k$. Thus $\alpha:C_k(0)\to
\Ker\partial_k$ is an isomorphism of vector spaces. Therefore $\alpha$ takes the basis
$\{A_S\ ;\ |S|=q \}_{q = 0,\dots,k}$ to a basis of $\Ker\partial_k$. The proof of Proposition \ref{filtrations}
shows that if $|S|\geq q$ then $A_S\in \mathcal N_{-q}C_k((\R^*)^k)$. We claim further that
if $|S|<q$ then $A_S\notin \mathcal N_{-q}C_k((\R^*)^k)$. It follows that $\{A_S\ ;\ |S| \geq q\}$
is a basis for $H_k(N_{-q}C_*((\R^*)^k)$,  and so 
$$
\alpha_*: H_*(\mathcal T_{-q}C_*(0))\to H_*(\mathcal N_{-q}C_*((\R^*)^k))
$$
is an isomorphism, as desired.

To prove the claim, it suffices to show that if $\bar A_S$ is the closure of $A_S$ in $\R^n$, then
$\bar A_S\notin \mathcal N_{-q}C_k((\R^*)^k)$. We show this by induction on $k$. The case $k=1$
is clear: If $\bar A = \{x\ ;\ x\geq 0\}$ then $\bar A\notin \mathcal N_{-1}C_1(\R)$ because
$\partial \bar A\neq 0$. In general $\bar A_S = \{(x_1,\dots,x_k)\ ;\ x_i \geq 0, i\notin S \}$.
Suppose $\bar A_S$ is $(-q)$-Nash constructible for some $q>|S|$. Then there exists
$\varphi:\R^k\to 2^{k-q}\Z$ generically Nash constructible in dimension $k$ such that
$$
\bar A_S = \{x\in \R^k\ ;\ \varphi(x)\notin 2^{k-q+1}\Z\},
$$
up to a set of dimension $< k$.
Let $j\notin S$, and let $W_j=\{(x_1,\dots,x_k)\ ;\ x_j=0\}\cong\R^{k-1}$. Then $\partial_{W_j}\varphi:W_j\to
2^{k-q-1}\Z$, and $\bar A_S\cap W_j = \{x\in W_j\ ;\ \partial_{W_j}\varphi(x)\notin 2^{k-q}\Z\}$,
up to a set of dimension $<k-1$.
Hence  $\bar A_S\cap W_j\in \mathcal N_{-q}C_{k-1}(W_j)$.
But $\bar A_S\cap W_j = \{(x,\dots, x_k)\ ;\ x_j = 0, x_i\geq 0, i\notin S \}$, and so
by inductive hypothesis $\bar A_S\cap W_j\notin \mathcal N_{-q}C_{k-1}(W_j)$, which is a contradiction.
\end{proof}
\begin{lem}\label{global}
For every toric variety $X_\Delta$ and every $p\leq 0$,
$$
\alpha_*:H_*(\mathcal T_pC_*(\Delta))\to H_*(\mathcal N_pC_*(X_\Delta))
$$
is an isomorphism.
\end{lem}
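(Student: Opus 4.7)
The plan is to argue by induction on the number of cones in $\Delta$. For $\Delta = \{0\}$ the variety $X_\Delta = (\R^*)^n$ has a single orbit, and the statement reduces to Lemma \ref{local}. For the inductive step, I would choose a cone $\sigma \in \Delta$ maximal in the face-inclusion order (any cone of maximal dimension works). Maximality has two useful consequences: $Y := \mathcal O_\sigma$ is closed in $X_\Delta$, since its closure is the union of the orbits indexed by the cofaces of $\sigma$ in $\Delta$ and $\sigma$ is the only such coface; and $\Delta' := \Delta \setminus \{\sigma\}$ is again a fan, with $X_{\Delta'} = U := X_\Delta \setminus Y$.

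On the toric side, maximality of $\sigma$ also forces the cellular boundary to vanish on $C_*(\sigma)$, so $C_*(\sigma)$ is a subcomplex of $C_*(\Delta)$ carrying zero differential. The quotient $C_*(\Delta)/C_*(\sigma)$ is naturally isomorphic to $C_*(\Delta')$ as a chain complex: the induced differential on the quotient projects $\partial c_\tau$ by dropping its $\sigma$-component, which is exactly what omitting $\rho = \sigma$ does in the native boundary formula $\partial c_\tau = \sum_\rho \partial_{\rho\tau}(\tau)$ for $\Delta'$. Because the toric filtration respects the direct-sum decomposition $C_*(\Delta) = C_*(\sigma) \oplus C_*(\Delta')$, we obtain a short exact sequence of filtered complexes
\begin{equation*}
0 \to \mathcal T_p C_*(\sigma) \to \mathcal T_p C_*(\Delta) \to \mathcal T_p C_*(\Delta') \to 0.
\end{equation*}
Theorem \ref{Nashproperties}(2) applied to the closed inclusion $Y \subset X_\Delta$ yields the parallel sequence
\begin{equation*}
0 \to \mathcal N_p C_*(Y) \to \mathcal N_p C_*(X_\Delta) \to \mathcal N_p C_*(U) \to 0,
\end{equation*}
and $\alpha$, together with its restrictions to $Y$ and to $U = X_{\Delta'}$, supplies vertical chain maps between the two sequences. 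The left square commutes because $\alpha$ sends $C_k(\sigma)$ into semialgebraic chains supported in $\mathcal O_\sigma = Y$. The right square commutes because for $\tau \neq \sigma$ the closure of $c_\tau$ in $X_\Delta$, restricted to $U$, coincides as a set with its closure in $X_{\Delta'} = U$, so the two candidates for the image of $c_\tau$ agree as semialgebraic chains.

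Passing to long exact homology sequences at each filtration level $p$, the left vertical map is an isomorphism by Lemma \ref{local}, the right vertical map is an isomorphism by the inductive hypothesis, and the Five Lemma forces $\alpha_* : H_*(\mathcal T_p C_*(\Delta)) \to H_*(\mathcal N_p C_*(X_\Delta))$ to be an isomorphism as well. The main technical point is the assembly of the toric short exact sequence: one must verify that $C_*(\sigma)$ has trivial differential (coming from maximality of $\sigma$) and that the induced differential on $C_*(\Delta)/C_*(\sigma)$ matches the native differential of $\Delta'$. Once this is in hand, the five-lemma argument is routine.
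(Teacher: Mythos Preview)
Your argument is correct and structurally identical to the paper's: induction on the number of orbits, a short exact sequence of filtered complexes on each side coming from Theorem \ref{Nashproperties}(2), Lemma \ref{local} for the single-orbit piece, and the Five Lemma. The one difference is the choice of orbit to peel off. You remove a \emph{maximal} cone, so $\mathcal O_\sigma$ is closed, $C_*(\sigma)$ is a subcomplex (zero differential), and $\Delta' = \Delta\setminus\{\sigma\}$ is again a fan; the induction therefore stays inside the class of toric varieties. The paper instead removes a \emph{minimal} cone, so $\mathcal O_\sigma$ is open and the leftover set of cones $\Sigma'$ need not be a fan; this is why the paper phrases the induction over arbitrary unions of orbits in $X_\Delta$ rather than over sub-toric-varieties. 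Both choices feed the same short exact sequences into the Five Lemma; your variant is marginally cleaner in that it avoids introducing the auxiliary class of orbit unions.
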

\begin{proof}
We show by induction on orbits that the lemma is true for every variety $Z$ that is a union
of orbits in the toric variety $X_\Delta$. Let $\Sigma$ be a subset of $\Delta$, and let $\Sigma'=\Sigma \setminus \{\sigma\}$,
where $\sigma\in \Sigma$ is a minimal cone, \emph{i.\ e.}\ there is no $\tau\in \Sigma$ with $\tau<\sigma$.
Let $Z$, resp.\ $Z'$, be the union of the orbits corresponding to cones in $\Sigma$, resp.\ $\Sigma'$.
Then $Z'$ is closed in $Z$, and $Z\setminus Z' = \mathcal O_\sigma$. We have a commutative diagram
with exact rows:
\begin{equation*}
\minCDarrowwidth 1pt
\begin{CD}
\cdots@>>>H_i(\mathcal T_pC_*(\Sigma')) @>>> H_i(\mathcal T_pC_*(\Sigma)) @>>>  H_i(\mathcal T_pC_*(\sigma))@>>> H_{i-1}(\mathcal T_pC_*(\Sigma'))@>>>\cdots \\
@. @VV\beta_i V
 @VV\gamma_i V @VV\alpha_iV @VV\beta_{i-1} V @. \\
\cdots@>>>H_i(\mathcal N_pC_*(\Sigma')) @>>> H_i(\mathcal N_pC_*(\Sigma)) @>>>  H_i(\mathcal N_pC_*(\sigma))@>>> H_{i-1}(\mathcal N_pC_*(\Sigma'))@>>>\cdots 
\end{CD}
\end{equation*}  
By  Lemma \ref{global} $\alpha_i$ is an isomorphism for all $i$. By inductive hypothesis $\beta_i$ is an isomorphism
for all $i$. Therefore $\gamma_i$ is an isomorphism for all $i$.
\end{proof}
\begin{thm}
For every toric variety $X_\Delta$ and every $p\leq 0$,
$$
\alpha_*:H_*\left(\frac{\mathcal T_pC_*(\Delta)}{\mathcal T_{p-1}C_*(\Delta)}\right)\to H_*\left(\frac{\mathcal N_pC_*(X_\Delta)}{\mathcal N_{p-1}C_*(X_\Delta)}\right)
$$
is an isomorphism.
\end{thm}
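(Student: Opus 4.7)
The plan is to deduce this theorem from Lemma \ref{global} by a standard Five Lemma argument applied to the long exact homology sequences of the short exact sequences of filtered pieces. This mirrors the reduction used in the proof of Theorem \ref{isoNash}.

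First I would consider, for each $p\le 0$, the short exact sequence of chain complexes
$$
0\to \mathcal T_{p-1}C_*(\Delta)\to \mathcal T_pC_*(\Delta)\to \frac{\mathcal T_pC_*(\Delta)}{\mathcal T_{p-1}C_*(\Delta)}\to 0,
$$
and the analogous short exact sequence
$$
0\to \mathcal N_{p-1}C_*(X_\Delta)\to \mathcal N_pC_*(X_\Delta)\to \frac{\mathcal N_pC_*(X_\Delta)}{\mathcal N_{p-1}C_*(X_\Delta)}\to 0.
$$
Proposition \ref{filtrations} guarantees that $\alpha$ sends $\mathcal T_pC_*(\Delta)$ into $\mathcal N_pC_*(X_\Delta)$ for every $p$, so it induces a morphism from the first short exact sequence to the second, and hence a commutative ladder connecting the resulting long exact homology sequences.

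Next I would write out a five-term piece of that ladder:
\begin{equation*}
\minCDarrowwidth 1pt
\begin{CD}
H_{i}(\mathcal T_{p-1}) @>>> H_{i}(\mathcal T_p) @>>> H_i\!\left(\tfrac{\mathcal T_p}{\mathcal T_{p-1}}\right) @>>> H_{i-1}(\mathcal T_{p-1}) @>>> H_{i-1}(\mathcal T_p) \\
@VVV @VVV @VV\alpha_* V @VVV @VVV \\
H_{i}(\mathcal N_{p-1}) @>>> H_{i}(\mathcal N_p) @>>> H_i\!\left(\tfrac{\mathcal N_p}{\mathcal N_{p-1}}\right) @>>> H_{i-1}(\mathcal N_{p-1}) @>>> H_{i-1}(\mathcal N_p)
\end{CD}
\end{equation*}
By Lemma \ref{global} applied at indices $p$ and $p-1$, the four outer vertical arrows are isomorphisms. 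The Five Lemma then forces the middle vertical arrow $\alpha_*$ to be an isomorphism as well, which is exactly the claim of the theorem.

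There is essentially no obstacle here: the only thing that needs to be checked is that $\alpha$ really is a morphism of filtered complexes (Proposition \ref{filtrations}) and that the two unfiltered comparison results $H_*(\mathcal T_pC_*(\Delta))\cong H_*(\mathcal N_pC_*(X_\Delta))$ hold for every $p$ (Lemma \ref{global}); both have been established. The only mild point to be careful about is the edge case when $p$ is so negative that $\mathcal T_pC_*(\Delta) = 0$ and $\mathcal N_pC_*(X_\Delta) = 0$, where both quotients are trivially zero and the statement is immediate, so the induction on $p$ implicit in the Five Lemma argument has a valid base.
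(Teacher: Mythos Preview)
Your proof is correct and follows essentially the same route as the paper: the paper's proof simply invokes Lemma \ref{global} together with the long exact homology sequences of the pairs $(\mathcal T_pC_*(\Delta),\mathcal T_{p-1}C_*(\Delta))$ and $(\mathcal N_pC_*(X_\Delta),\mathcal N_{p-1}C_*(X_\Delta))$, which is exactly your Five Lemma ladder argument. One small remark: no induction on $p$ is needed, since Lemma \ref{global} already gives the isomorphism $H_*(\mathcal T_pC_*(\Delta))\to H_*(\mathcal N_pC_*(X_\Delta))$ for \emph{every} $p$ directly, so the four outer vertical maps are known to be isomorphisms outright.
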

\begin{proof}
This follows from Lemma \ref{global} and the  long exact homology sequences of  the pairs $(\mathcal T_pC_*(\Delta),\mathcal T_{p-1}C_*(\Delta))$ 
and $(\mathcal N_pC_*(X_\Delta),\mathcal N_{p-1}C_*(X_\Delta))$.
\end{proof}
Thus for every toric variety $X_\Delta$ the toric filtered complex $\mathcal TC_*(\Delta)$ is quasi-isomorphic to the Nash constructible filtered complex $\mathcal NC_*(X_\Delta)$, and so the toric spectral sequence \cite{BFMH} is isomorphic to the weight spectral sequence.

\example For toric varieties of dimension at most 4, the toric spectral sequence collapses (\cite{BFMH}, \cite{sine}).
V. Hower \cite{hower} discovered that the spectral sequence does not collapse for the 6-dimensional 
projective toric variety associated to the matroid of the Fano plane.


\bigskip
\section{Appendix: Semialgebraic chains}
\label{chains}

In this appendix we denote by $X$ a locally compact semialgebraic set (\emph{i.e.} a semialgebraic 
subset of the set of real points of a real algebraic variety) and by  $C_*(X)$ the complex of semialgebraic
chains of $X$ with closed supports and coefficients in $\Z_2$.  The complex $C_*(X)$  has the following
geometric description, which is equivalent to the usual definition using a
semialgebraic triangulation (\cite{BCR} 11.7).

A \emph{semialgebraic chain} $c$ of $X$ is an equivalence class of closed
semialgebraic subsets of $X$. For $k\geq 0$, let $S_k(X)$ be the $\Z_2$
vector space generated by the closed semialgebraic subsets of $X$ of
dimension $\leq k$. Then $C_k(X)$ is the $\Z_2$ vector space obtained as
the quotient of $S_k(X)$ by the following relations:
\begin{enumerate}
\item If $A$ and $B$ are
closed semialgebraic subsets of $X$ of dimension at most $k$, then
$$
A+B\sim \text{cl}(A\div B),
$$
where $A\div B = (A\cup B)\setminus (A\cap B)$ is the symmetric 
difference of $A$ and $B$, and cl denotes closure.
\item
If $A$ is a closed semialgebraic subset of $X$ and $\dim A< k$, then $A\sim 0$.
\end{enumerate}
If the chain $c$ is represented by the semialgebraic set $A$, we write
$c = [A]$.
If $c\in C_k(X)$, the \emph{support} of $c$, denoted $\supp c$,
is the smallest closed semialgebraic set representing $c$. If $c = [A]$
then $\supp c = \{x\in A\ ;\ \dim_xA = k\}$.

The \emph{boundary} operator $\partial_k:C_k(X)\to C_{k-1}(X)$ can be defined 
using the link operator $\Lambda$ on constructible functions \cite{mccroryparusinski}.
If $c\in C_k(X)$ with $c=[A]$, then $\partial_k c=[\partial A]$, where
$\partial A= \{x\in A\ ;\ \Lambda\1_A(x) \equiv 1 \pmod 2\}$. The operator
$\partial_k$ is well-defined, and $\partial_{k-1}\partial_k = 0$, since
$\Lambda\circ\Lambda=2\Lambda$.

If $f:X\to Y$ is a proper continuous semialgebraic map,  the \emph{pushforward}
homomorphism $f_*:C_k(X)\to C_k(Y)$ is defined as follows. Let $A$ be
a representative of $c$. Then $f(A) \sim B_1+\cdots+B_l$, where each closed
semialgebraic set $B_i$ has the property that $\#(A\cap f^{-1}(y))$
is constant mod 2 on $B_i\setminus B_i'$ for some closed semialgebraic
set $B_i\subset B_i$ with $\dim B_i'<k$. For each $i$ let $n_i\in \Z_2$ be this constant
value. Then
$f_*(c) = n_1[B_1]+\cdots+n_l[B_l]$.

Alternately, $f_*(c) = [B]$, where $B=\text{cl}\{y\in Y \ ;\ f_*\1_A(y) \equiv 1\pmod 2\}$,
and $f_*$ is pushforward for constructible functions \cite{mccroryparusinski}. From
this definition it is easy to prove the standard properties $g_*f_* = (gf)_*$
and $\partial_k f_* = f_*\partial_k$.

We use two basic operations on semialgebraic chains: restriction and closure. These
operations do not commute with the boundary operator in general.

Let $c\in C_k(X)$ and let $Z\subset X$ be a locally closed semialgebraic subset. If $c=[A]$, we define the
\emph{restriction}  by $c|_Z = [A\cap Z]\in C_k(Z)$. This operation is well-defined. If $U$ is an open
semialgebraic subset of $X$, then $\partial_k(c|_U)=(\partial_k c)|_U$. 

Now let $c\in C_k(Z)$ with $Z\subset X$ locally closed semialgebraic. If $c=[A]$ we define the \emph{closure}
 by $\bar c = [\text{cl}(A)]\in C_k(X)$, where $\text{cl}(A)$ is the closure of $A$ in $X$. Closure
is a well-defined operation on semialgebraic chains.

By means of the restriction and closure operations, we define the pullback of a chain in the following
situation, which can be applied to an acyclic square (\ref{acyclic}) of real algebraic varieties.  
Consider a square of 
locally closed semialgebraic sets, 
\begin{equation*}\minCDarrowwidth 1pt\begin{CD}
\tilde Y @> >> \tilde X \\
 @V VV @VV\pi V \\
Y @>i >>X
\end{CD}\end{equation*}such that  $\pi:\tilde X\to X$ is a proper continuous semialgebraic map, $i$ is the inclusion of a closed semialgebraic subset, $\tilde Y = \pi^{-1}(Y)$, and  the restriction
of $\pi$ is a homeomorphism $\pi': \tilde X \setminus \tilde Y \to X \setminus Y$.  
Let $c\in C_k(X)$. We define the \emph{pullback}
$\pi^{-1}c\in C_k(\tilde X)$ by the formula
$$
\pi^{-1}c = \overline{((\pi')^{-1})_*(c|_{X\setminus Y})}.
$$
Pullback
does not commute with the boundary operator in general.


\end{document}